\documentclass{amsart}
\usepackage{geometry} 
\usepackage{url}               
\geometry{a4paper}                   
\usepackage{graphicx}
\usepackage{amssymb}
\usepackage{enumitem}
\usepackage{hyperref}
\usepackage[shortalphabetic]{amsrefs}
\DeclareGraphicsRule{.tif}{png}{.png}{`convert #1 `dirname #1`/`basename #1 .tif`.png}
\newtheorem{theorem}{Theorem}[section]
\newtheorem{proposition}[theorem]{Proposition}
\newtheorem{conditions}[theorem]{Conditions}
\newtheorem{corollary}[theorem]{Corollary}
\newtheorem{lemma}[theorem]{Lemma}
{\theoremstyle{remark}
\newtheorem{remark}[theorem]{Remark}
\newtheorem*{remark*}{Remark}
\theoremstyle{definition}

\numberwithin{equation}{section}
\newcommand{\p}{\partial}

\newcommand{\RR}{\mathbb R}
\newcommand{\NN}{\mathbb N}
\newcommand{\e}{\epsilon}

\newcommand{\LL}{{\mathcal{L}}}
\newcommand{\h}{{\mathfrak{h}}}
\newcommand{\Acir}{\smash{A\mspace{-8mu}\overset{\circ}{\vphantom{\rule{0em}{1.2ex}}}\mspace{3mu}}}
\DeclareMathOperator{\tr}{tr}

\DeclareMathOperator*{\osc}{osc}
\title[Hypersurfaces with pinched principal curvatures]{Convex hypersurfaces with pinched principal curvatures and flow of convex hypersurfaces by high powers of curvature}
\author{Ben Andrews}
\thanks{Research partially supported by Discovery Grant DP0556211 of the Australian Research Council}
\address{Centre for Mathematics and its Applications, Australian National University, ACT 0200 Australia}
\email{Ben.Andrews@anu.edu.au}
\author{James McCoy}
\address{School of Mathematics and Applied Statistics,
University of Wollongong,
Wollongong, NSW 2522, 
Australia}
\email{jamesm@uow.edu.au}
\subjclass[2000]{Primary 53C44, 53C20; Secondary 35K55, 58J35, 53C21}

\begin{document}

\begin{abstract}
We consider convex hypersurfaces for which the ratio of principal curvatures at each point is bounded by a function of the maximum principal curvature with limit $1$ at infinity.  We prove that the ratio of circumradius to inradius is bounded by a function of the circumradius with limit $1$ at zero.   We apply this result to the motion of hypersurfaces by arbitrary speeds which are smooth homogeneous functions of the principal curvatures of degree greater than one.  For smooth, strictly convex initial hypersurfaces with ratio of principal curvatures sufficiently close to one at each point, we prove that solutions remain smooth and strictly convex and become spherical in shape while contracting to points in finite time. 
\end{abstract}
\maketitle

\section{Introduction}

Recently several papers have considered the flow of convex hypersurfaces by speeds which are
homogeneous functions of the principal curvatures of degree $\alpha>1$.  Under suitable pinching conditions on the curvature of the initial hypersurface, the aim is to prove that solutions become spherical as they contract to points.   This behaviour has been established for a wide range of flows where the speed is homogeneous of degree 1 in the principal curvatures, including the mean curvature flow \cite{HuiskenMCF}, the flow by $n$th root of Gauss curvature \cite{ChowGCF}, square root of scalar curvature \cite{ChowSCF}, and a large family of other speeds \cites{AndrewsEuc,AndrewsPinch,Andrews2D}. The first such result with degree of homogeneity higher than $1$ was due to Ben Chow \cite{ChowGCF}, and concerned flow by powers of the Gauss curvature.  He proved that flow by $K^\beta$ with $\beta\geq 1/n$ produces a spherical limiting shape provided the initial hypersurface is sufficiently pinched, in the sense that $h_{ij}\geq C(\beta)Hg_{ij}$.  Later such results were proved by Schulze \cite{Schulze} for powers of the mean curvature, by Alessandroni and Sinestrari for powers of the scalar curvature \cites{Aless,AS}, and by Cabezas-Rivas and Sinestrari \cite{CRS} for normalized flows by powers of elementary symmetric functions.    

A feature of the results mentioned above is that the flows all have some divergence structure, a point which is used crucially in deriving sufficient regularity of solutions to deduce the existence of a smooth limiting hypersurface:  In \cite{ChowGCF} the divergence structure was used to deduce pinching of the principal curvatures using integral estimates in a manner similar to \cite{HuiskenMCF}.  In \cite{Schulze}, \cite{Aless}, and \cite{AS} the curvature pinching was proved using maximum principle arguments, but the divergence structure was still needed because higher regularity of solutions was established using results for divergence form porous-medium equations.   To date we are not aware of any work which provides H\"older continuity for solutions of porous medium equations in non-divergence form without assuming regularity of the coefficients.

In this paper we provide a geometric estimate which circumvents this difficulty:  We prove in Section \ref{sec:geometric} that if the ratios of principal curvatures at each point are bounded in terms of the maximum principal curvature by a function which approaches one at infinity, then the ratio of circumradius to inradius is as close to one as desired if the circumradius is sufficiently small.  Using this, together with a simple argument using spherical barriers, we deduce a positive lower bound on the speed for solutions of the flow equations, at times sufficiently close to the final time.  This enables us to prove a result for flows with no special divergence structure.   Somewhat surprisingly, we are able to prove the result for \emph{arbitrary} smooth speeds which are strictly parabolic and homogeneous of degree $\alpha>1$, provided the pinching ratio of the initial hypersurface is sufficiently close to $1$ depending only on $n$, $\alpha$ and a bound for the second derivatives of the speed.  In particular we make no assumptions involving convexity or concavity of the speed as a function of the principal curvatures.  This is achieved using a parabolic analogue of results of Cordes \cite{Cordes} and Nirenberg \cite{Nirenberg} which give H\"older continuity of first derivatives for solutions of elliptic equations with coefficients close to the identity.

We note that
the first author considered flows of surfaces in three-dimensional space by quite general functions of curvature \cite{Andrews2D}, and the present paper provides the results required to prove smooth convergence to spheres for all the flows considered in that paper.

There are a few results that give a successful treatment of particular high order flows, requiring only (strict) convexity rather than any pinching condition.  The first author proved such a result for Gauss curvature flow of a convex surface in three dimensional space \cite{AndrewsRolling}, and Schulze and Schn\"urer \cite{Schulze} treated flows of surfaces by powers of mean curvature between $1$ and $5$.  Schn\"urer \cite{Schnuerer} treated flow of surfaces with speed $\|A\|^2$, as well as a collection of other particular flows.    While the results of the present paper show that sufficiently strong pinching is preserved by very general flows, understanding the behaviour of such flows without such a pinching requirement seems to be a much more subtle and difficult problem.
 
\section{Notation and preliminary results}

We denote the principal curvatures of a hypersurface $M$ by $\kappa_{\min}=\kappa_1 \leq \ldots \leq \kappa_n = \kappa_{\max}$.  These are the eigenvalues of the Weingarten map, whose matrix is denoted $\mathcal{W} = \left( h^{i}_{\> j} \right)$.  The trace of the Weingarten map is the mean curvature $H$, while $\left\| A \right\|^{2}$ denotes the norm of the second fundamental form $A = \left( h_{ij} \right)$,
$$\left\| A\right\|^{2} = \kappa_{1}^{2} + \ldots + \kappa_{n}^{2} \mbox{.}$$
The trace free second fundamental form $\Acir$ has components $h_{ij} - \frac{1}{n} H g_{ij}$ where $g_{ij}$ denotes the components of the metric on the hypersurface $M$.  The length of $\Acir$ satisfies
\begin{equation}\label{eq:tracelessA}
\left\|\Acir\right\|^{2} = \left\| A \right\|^{2} - \frac{1}{n} H^{2} = \frac{1}{n} \sum_{i<j} \left( \kappa_{i} - \kappa_{j} \right)^{2} \mbox{,}
\end{equation}
which is a measure of the differences between the principal curvatures.  We also make use of the elementary symmetric functions of curvature, defined by
$$
E_k = \frac{1}{\binom{n}{k}}\sum_{1\leq i_1<\dots<i_k\leq n}\kappa_{i_1}\dots\kappa_{i_k}.
$$
   In our analysis of the flow equations we will use several geometric estimates for hypersurfaces.   The first of these appears in \cite{HuiskenMCF}.
   
   \begin{lemma} \label{T:norm}
   $$\left\| \nabla A \right\|^{2} \geq \frac{3}{n+2} \left\| \nabla H \right\|^{2}$$
   and equivalently
    $$\left\|\nabla\Acir\right\|^2=\left\| \nabla_{i}h_{jk} - \frac{1}{n} g_{jk} \nabla_{i}H \right\|^{2} \geq \frac{2\left( n-1\right)}{3n} \left\| \nabla A \right\|^{2} \mbox{.}$$
   \end{lemma}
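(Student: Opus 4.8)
The plan is to exploit the Codazzi equations. In Euclidean space these say that the tensor $T_{ijk}:=\nabla_i h_{jk}$ is totally symmetric in its three indices, so up to this symmetry its only trace is $g^{jk}T_{ijk}=\nabla_i H=:t_i$. Following the standard decomposition of a symmetric $3$-tensor into a totally trace-free part and a ``pure trace'' part, I would write
\[
T_{ijk}=E_{ijk}+\frac{1}{n+2}\left(g_{ij}t_k+g_{ik}t_j+g_{jk}t_i\right),
\]
and check that $E$ is totally trace-free, i.e. $g^{jk}E_{ijk}=0$: since $g^{jk}g_{jk}=n$, the three terms in the bracket contribute $t_i+t_i+nt_i=(n+2)t_i$, which is exactly cancelled by the factor $\tfrac{1}{n+2}$. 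This also pins down that constant uniquely.

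Next I would compute the squared norm of the pure-trace tensor $P_{ijk}:=g_{ij}t_k+g_{ik}t_j+g_{jk}t_i$. Expanding $P_{ijk}P^{ijk}$ produces nine contractions: by symmetry the three ``diagonal'' ones each equal $n\|t\|^2$ and the six ``cross'' ones each equal $\|t\|^2$ (using $g_{ij}g^{ik}=\delta^k_j$), so $\|P\|^2=3(n+2)\|t\|^2$. Since $E$ is orthogonal to $P$ (contracting $E$ against any summand of $P$ yields a trace of $E$, hence zero), we obtain the pointwise identity
\[
\left\|\nabla A\right\|^2=\left\|T\right\|^2=\left\|E\right\|^2+\frac{3}{n+2}\left\|\nabla H\right\|^2,
\]
and dropping the nonnegative term $\|E\|^2$ gives the first inequality.

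For the second statement I would record the elementary identity obtained by expanding the square and using $g^{jk}\nabla_i h_{jk}=\nabla_i H$ together with $g^{jk}g_{jk}=n$:
\[
\left\|\nabla\Acir\right\|^2=\left\|\nabla_i h_{jk}-\tfrac1n g_{jk}\nabla_i H\right\|^2=\left\|\nabla A\right\|^2-\frac1n\left\|\nabla H\right\|^2 .
\]
Feeding in the first inequality in the form $\|\nabla H\|^2\le\tfrac{n+2}{3}\|\nabla A\|^2$ then yields $\|\nabla\Acir\|^2\ge\bigl(1-\tfrac{n+2}{3n}\bigr)\|\nabla A\|^2=\tfrac{2(n-1)}{3n}\|\nabla A\|^2$; conversely, this same identity shows the two inequalities are genuinely equivalent, justifying the word ``equivalently'' in the statement. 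I do not expect a real obstacle here: the only point requiring care is the bookkeeping in the $3$-tensor decomposition — verifying that $\tfrac{1}{n+2}$ is forced by trace-freeness and correctly evaluating $\|P\|^2=3(n+2)\|t\|^2$ — after which the conclusion is routine linear algebra.
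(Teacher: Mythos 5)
Your proof is correct, and it is essentially the standard argument from Huisken's paper \cite{HuiskenMCF} that the present paper cites for this lemma without reproducing a proof: use Codazzi to make $\nabla_i h_{jk}$ totally symmetric, split off the trace part $\frac{1}{n+2}(g_{ij}\nabla_k H+g_{ik}\nabla_j H+g_{jk}\nabla_i H)$, and use orthogonality of the trace-free remainder. The bookkeeping ($\|P\|^2=3(n+2)\|\nabla H\|^2$, the identity $\|\nabla\Acir\|^2=\|\nabla A\|^2-\tfrac1n\|\nabla H\|^2$, and the final arithmetic) all checks out.
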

   
   The next result gives a bound on the ratio of principal curvatures if the length of the traceless second fundamental form is small enough compared to the mean curvature:
      
   \begin{lemma} \label{T:pinch}
  If $H$ is positive and $\left\| \Acir \right\|^{2} \leq \varepsilon H^{2}$ at $p$ with $\e<\frac{1}{n(n-1)}$, then the second fundamental form is positive definite at $p$, and the principal curvatures satisfy
$$
\left(1-\sqrt{n(n-1)\e}\right)\frac{H}{n}\leq \kappa_i\leq \left(1+\sqrt{n(n-1)\e}\right)\frac{H}{n}.
$$
\end{lemma}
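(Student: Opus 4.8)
The plan is to read off the pointwise bound directly from the identity \eqref{eq:tracelessA}, which writes $\|\Acir\|^2$ as a fixed multiple of $\sum_{i<j}(\kappa_i-\kappa_j)^2$. The key algebraic observation is that, for each fixed index $i$, the deviation of $\kappa_i$ from the average $H/n$ is a sum of pairwise differences:
\[
\kappa_i-\frac Hn=\frac1n\sum_{j=1}^n(\kappa_i-\kappa_j)=\frac1n\sum_{j\ne i}(\kappa_i-\kappa_j).
\]
Applying Cauchy--Schwarz to this sum of $n-1$ terms and then enlarging the resulting sum of squares to the full sum appearing in \eqref{eq:tracelessA} gives
\[
\left(\kappa_i-\frac Hn\right)^2\le\frac{n-1}{n^2}\sum_{j\ne i}(\kappa_i-\kappa_j)^2\le\frac{n-1}{n^2}\sum_{k<l}(\kappa_k-\kappa_l)^2=\frac{n-1}{n}\left\|\Acir\right\|^2 .
\]

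First I would record this chain of inequalities, the only point of care being that $\{(\kappa_i-\kappa_j)^2:j\ne i\}$ is a subcollection of $\{(\kappa_k-\kappa_l)^2:k<l\}$, so the middle inequality is legitimate. Next I would substitute the hypothesis $\|\Acir\|^2\le\varepsilon H^2$ together with $H>0$ to obtain, for every $i$,
\[
\left|\kappa_i-\frac Hn\right|\le\sqrt{\frac{n-1}{n}\,\varepsilon}\;H=\frac1n\sqrt{n(n-1)\varepsilon}\;H,
\]
which is exactly the asserted two-sided estimate after rearranging.

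Finally, positive definiteness of $A$ at $p$ falls out for free: the assumption $\varepsilon<\frac1{n(n-1)}$ forces $\sqrt{n(n-1)\varepsilon}<1$, so the lower bound $\bigl(1-\sqrt{n(n-1)\varepsilon}\bigr)\frac Hn$ is strictly positive and hence $\kappa_i>0$ for all $i$. There is no substantial obstacle in this argument; the only thing needing attention is the bookkeeping in the Cauchy--Schwarz step --- keeping track of the factors of $n$ and the number of terms so that the pinching constant emerges precisely as $\sqrt{n(n-1)\varepsilon}$ rather than a weaker multiple.
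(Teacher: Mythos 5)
Your proof is correct, and it takes a genuinely different route from the one in the paper. The paper starts from the inequality $0\ge\|A\|^2-\bigl(\tfrac1n+\e\bigr)H^2$, isolates $\kappa_i^2$, applies Cauchy--Schwarz in the form $\sum_{j\ne i}\kappa_j^2\ge\tfrac1{n-1}\bigl(\sum_{j\ne i}\kappa_j\bigr)^2=\tfrac1{n-1}(H-\kappa_i)^2$, and then reads the bound off as the two roots of the resulting quadratic $z^2-2z+\bigl(1-n(n-1)\e\bigr)\le 0$ in $z=n\kappa_i/H$. You instead work with the pairwise-difference form of $\|\Acir\|^2$ from \eqref{eq:tracelessA}, write $\kappa_i-H/n$ as the average of the differences $\kappa_i-\kappa_j$, apply Cauchy--Schwarz to that sum of $n-1$ terms, and enlarge to the full sum $\sum_{k<l}(\kappa_k-\kappa_l)^2=n\|\Acir\|^2$. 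The two arguments are equally elementary and in fact saturate in exactly the same configuration (all $\kappa_j$ with $j\ne i$ equal), which is why both yield the sharp constant $\sqrt{n(n-1)\e}$. Your version has the small advantage of bounding $|\kappa_i-H/n|$ directly without needing to complete a square and solve a quadratic inequality; the paper's version makes the role of the hypothesis $\|A\|^2\le(\tfrac1n+\e)H^2$ slightly more transparent. Either is a fine proof of the lemma.
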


\begin{proof} 
Fix $i\in\{1,\dots,n\}$.  Then we have
\begin{align*}
0&\geq \left\| A \right\|^{2}-\left(\frac1n+\e\right)H^{2}\\
&=\kappa_i^2+\sum_{j\neq i}\kappa_j^2-\left(\frac1n+\e\right)H^2\\
&\geq\kappa_i^2+\frac{1}{n-1}\left(\sum_{j\neq i}\kappa_j\right)^2-\left(\frac1n+\e\right)H^2\\
&=\kappa_i^2+\frac{1}{n-1}\left(H-\kappa_i\right)^2-\left(\frac1n+\e\right)H^2\\
&=\frac{n}{n-1}\kappa_i^2-\frac{2}{n-1}\kappa_iH+\left(\frac{1}{n(n-1)}-\e\right)H^2.
\end{align*}
It follows that $z=n\kappa_i/H$ lies between the roots of the equation
$z^2-2z+(1-n(n-1)\e)=0$.
\end{proof}

Define $C= \kappa_{1}^{3} + \ldots + \kappa_{n}^{3}$.  The following estimate generalizes one from \cite{HuiskenVol}.  

\begin{lemma} \label{T:Cest}
If $H>0$ and $\|\Acir\|^2=\e H^2$ at $p$ for some $\e\in\left(0,\frac{1}{n(n-1)}\right)$, then
$$
n C-(1+n\e)H\|A\|^2\geq \e(1+n\e)(1-\sqrt{n(n-1)\e})H^3.
$$
\end{lemma}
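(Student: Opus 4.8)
The plan is to expand everything in terms of the principal curvatures and reduce the claimed inequality to an algebraic estimate on the quantities $\kappa_i$, using the constraint $\|\Acir\|^2 = \e H^2$ together with the pinching bounds from Lemma \ref{T:pinch}. First I would write $\mu_i = \kappa_i - H/n$ for the traceless parts, so that $\sum_i \mu_i = 0$, $\sum_i \mu_i^2 = \|\Acir\|^2 = \e H^2$, and I would expand $C = \sum_i \kappa_i^3$ and $\|A\|^2 = \sum_i \kappa_i^2$ in powers of $\mu_i$. One gets $\|A\|^2 = \tfrac{1}{n}H^2 + \e H^2$ and $C = \tfrac{1}{n^2}H^3 + \tfrac{3}{n}H\,\e H^2 + \sum_i \mu_i^3$, where the only term that is not immediately controlled is the cubic sum $P := \sum_i \mu_i^3$. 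Substituting these into $nC - (1+n\e)H\|A\|^2$, the leading terms should cancel so that the left-hand side reduces to exactly $nP$ plus lower-order-in-$\e$ multiples of $\e^2 H^3$; I expect the identity
\[
nC - (1+n\e)H\|A\|^2 = nP - n\e^2 H^3 + (\text{a remaining }\e H^3 \text{ term}),
\]
and the point will be to show this is at least $\e(1+n\e)(1-\sqrt{n(n-1)\e})H^3$.

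The main obstacle is bounding the cubic term $P = \sum_i \mu_i^3$ from below. Since $\sum_i \mu_i = 0$ and $\sum_i \mu_i^2 = \e H^2$ is fixed, $P$ can be negative, so I need a sharp lower bound of the form $P \geq -c\, \e^{3/2} H^3$ or better. The key trick is that Lemma \ref{T:pinch} gives $|\mu_i| \leq \sqrt{n(n-1)\e}\,\tfrac{H}{n}$ for every $i$; combining this pointwise bound with $\sum_i \mu_i^2 = \e H^2$ yields
\[
P = \sum_i \mu_i^3 \geq -\Big(\max_i |\mu_i|\Big)\sum_{\mu_i<0}\mu_i^2 \geq -\sqrt{n(n-1)\e}\,\frac{H}{n}\sum_i \mu_i^2 = -\sqrt{n(n-1)\e}\,\frac{\e H^3}{n},
\]
so that $nP \geq -\e\sqrt{n(n-1)\e}\,H^3 = -\e(1+n\e - (1+n\e))\cdots$; more usefully $nP \geq -\sqrt{n(n-1)\e}\,\e H^3$.

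With this in hand the proof is a matter of bookkeeping: plug the bound on $nP$ into the expanded expression for $nC - (1+n\e)H\|A\|^2$, collect the $\e H^3$ and $\e^2 H^3$ terms, and check that what remains dominates $\e(1+n\e)(1-\sqrt{n(n-1)\e})H^3 = \e H^3 + n\e^2 H^3 - \e\sqrt{n(n-1)\e}\,H^3 - n\e^2\sqrt{n(n-1)\e}\,H^3$. The negative contributions on the right are exactly matched (up to the harmless higher-order term $n\e^2\sqrt{n(n-1)\e}H^3$, which only helps) by the $-\sqrt{n(n-1)\e}\,\e H^3$ coming from $nP$ and the $-n\e^2 H^3$ coming from the expansion, so the inequality should close with room to spare. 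I would double-check the constant in the expansion of $nC$ carefully, since that is where an error is most likely to creep in, and verify the degenerate case where all the negative $\mu_i$ are concentrated in a single index (which is when the bound on $P$ is sharp) to confirm the stated constant is the right one.
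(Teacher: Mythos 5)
Your setup is fine: with $\mu_i=\kappa_i-H/n$ one does get $\|A\|^2=(\tfrac1n+\e)H^2$ and $C=\tfrac{1}{n^2}H^3+\tfrac{3\e}{n}H^3+P$ with $P=\sum_i\mu_i^3$, and a short computation gives the exact identity
$$
nC-(1+n\e)H\|A\|^2 = nP + \e(1-n\e)H^3.
$$
Your lower bound $|\mu_i|\leq\sqrt{n(n-1)\e}\,H/n=:SH/n$ and the resulting $nP\geq -S\e H^3$ are also correct. The problem is the final bookkeeping. Writing $S=\sqrt{n(n-1)\e}$, you must show
$$
nP + \e H^3 - n\e^2 H^3 \;\geq\; \e H^3 + n\e^2 H^3 - S\e H^3 - Sn\e^2 H^3,
$$
and after cancelling $\e H^3$ and $-S\e H^3$ against each other, what remains on the left is $-n\e^2 H^3$ while what remains on the right is $+n\e^2 H^3 - Sn\e^2 H^3$. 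These do \emph{not} cancel in your favour; the shortfall is $n\e^2 H^3(2-S)$, which is strictly positive for every $\e$ in the allowed range (since $S<1<2$). The claim that the $-n\e^2 H^3$ term ``matches'' the $+n\e^2 H^3$ term is a sign error: it opposes it. So the crude bound on $P$ closes the estimate only up to an error of order $\e^2 H^3$.

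The approach is salvageable, but only with a genuinely sharper bound on $P$ that actually uses the constraint $\sum_i\mu_i=0$. In your estimate you discard the positive cubes $\mu_i^3$ for $\mu_i>0$, but the trace-free condition forces their total to be comparable to the negative ones. The true minimum of $P$ subject to $\sum\mu_i=0$, $\sum\mu_i^2=\e H^2$, and the pinching bound is attained when $n-1$ of the $\mu_i$ take one value and the remaining one takes another, giving $P\geq -\tfrac{(n-2)}{\sqrt{n(n-1)}}\,\e^{3/2}H^3$. Plugging this in, the required inequality reduces (after substituting $\e=S^2/(n(n-1))$) to $(S-1)^2\geq 0$, so it holds with equality precisely at $S=1$ — the estimate is tight, leaving no ``room to spare.'' The paper's argument sidesteps the cubic term altogether: it rewrites $nC-(1+n\e)H\|A\|^2$ as $\tfrac n2\sum_i\dot Q^i\kappa_i^2$ with $\dot Q^i=2(\kappa_i-(\tfrac1n+\e)H)$, uses the Euler relation $\sum_i\dot Q^i\kappa_i=2Q=0$ to kill the cross term and obtain the manifestly nonnegative expression $n\sum_i\kappa_i(\kappa_i-(\tfrac1n+\e)H)^2$, and then applies $\kappa_i\geq(1-S)H/n$. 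Centering at $(\tfrac1n+\e)H$ instead of $H/n$ avoids ever isolating the troublesome cubic sum.
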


\begin{proof}
Let $Q=\|\Acir\|^2-\e H^2=\|A\|^2-(\frac1n+\e)H^2$.  Then $\dot Q^i:=\frac{\partial Q}{\partial\kappa_i}=
2(\kappa_i-(\frac1n+\e)H)$, and $\frac{n}{2}\sum_i\dot Q^i\kappa_i^2 = nC-(1+n\e)H\|A\|^2$.  Noting that by Euler's relation $\sum_i\dot Q^i\kappa_i = 2Q=0$, we write
\begin{align*}
nC-(1+n\e)H\|A\|^2
&=\frac{n}{2}\sum_i\dot Q^i\kappa_i^2\\
&=\frac{n}{2}\sum_i\dot Q^i\kappa_i\left(\left(\kappa_i-\left(\frac1n+\e\right)H\right)+\left(\frac1n+\e\right)H\right)\\
&=n\sum_i\kappa_i\left(\kappa_i-\left(\frac1n+\e\right)H\right)^2+\frac{n}{2}\left(\frac1n+\e\right)H\sum_i\dot Q^i\kappa_i\\
&\geq (1-\sqrt{n(n-1)\e})H\sum_i\left(\kappa_i-\left(\frac1n+\e\right)H\right)^2\\
&=(1-\sqrt{n(n-1)\e})H\left(\|A\|^2-2\left(\frac1n+\e\right)H^2+n\left(\frac1n+\e\right)^2H^2\right)\\
&=(1-\sqrt{n(n-1)\e})\e\left(1+n\e\right)H^3.
\end{align*}
where we used the estimate of Lemma \ref{T:pinch} to obtain the inequality.
\end{proof}

\section{Geometric estimates}\label{sec:geometric}

In \cite{AndrewsEuc}*{Theorem 5.1} it was proved that a pointwise bound on the ratio of principal curvatures of a convex hypersurface implies a bound on the ratio of maximum to minimum width.  
Our main result is a strengthening of that estimate in the case where the ratios of principal curvatures approach one at points where the maximum principal curvature is large.  We denote by $r_-(M)$ the inradius of a convex hypersurface $M$ (the radius of the largest ball enclosed by $M$) and by $r_+(M)$ the circumradius (the radius of the smallest closed ball which contains $M$).

\begin{theorem}\label{thm:geombound}
Let $C_1(\varepsilon)\geq 0$ for each $\varepsilon>0$.  Then for any $\rho>0$ there exists $C_2(\rho)> 0$ such that every smooth convex compact hypersurface $M$ satisfying $\kappa_n\leq \left(1+\varepsilon\right)\kappa_1+C_1(\varepsilon)$ for every $\varepsilon>0$ and $r_+(M)\leq C_2(\rho)$ satisfies $r_+(M)\leq (1+\rho)r_-(M)$.
\end{theorem}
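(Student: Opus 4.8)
The plan is to argue by contradiction, extracting a limiting object from a sequence of hypersurfaces that violate the conclusion, and to use the hypothesis $\kappa_n\leq(1+\varepsilon)\kappa_1+C_1(\varepsilon)$ to force that limit to be a round sphere, contradicting the assumed gap between circumradius and inradius. Suppose the theorem fails for some fixed $\rho>0$. Then there is a sequence of smooth convex compact hypersurfaces $M_k$ with $r_+(M_k)\to 0$ (one can arrange $r_+(M_k)\le 1/k$, since $C_2(\rho)$ is supposed to exist for \emph{some} value and we are denying existence for all positive values) but $r_+(M_k)>(1+\rho)r_-(M_k)$ for every $k$. Rescale: set $\widetilde M_k = r_+(M_k)^{-1} M_k$, so that $r_+(\widetilde M_k)=1$ and $r_-(\widetilde M_k)<\frac{1}{1+\rho}$. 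Under this scaling the principal curvatures scale like $\widetilde\kappa = r_+(M_k)\,\kappa$, so the hypothesis becomes $\widetilde\kappa_n\leq(1+\varepsilon)\widetilde\kappa_1 + r_+(M_k)C_1(\varepsilon)$ for every $\varepsilon>0$.

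The key step is to observe that, after rescaling, the principal curvatures of $\widetilde M_k$ are themselves controlled: since $\widetilde M_k$ is convex with circumradius $1$, a standard containment argument (the hypersurface encloses a ball whose radius is bounded below away from $0$ — here we need the inradius not to degenerate, which requires a separate lower bound, see below) together with the pinching inequality forces $\widetilde\kappa_1$ and $\widetilde\kappa_n$ to be comparable and bounded. More precisely, fixing any $\varepsilon>0$ and letting $k\to\infty$, the term $r_+(M_k)C_1(\varepsilon)\to 0$, so in the limit $\widetilde\kappa_n\le(1+\varepsilon)\widetilde\kappa_1$; since $\varepsilon$ is arbitrary, the limiting hypersurface (if it exists and is smooth) has all principal curvatures equal at every point, hence is a round sphere by the classical umbilic theorem. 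A round sphere has $r_+ = r_-$, contradicting $r_-\le\frac{1}{1+\rho}<1=r_+$ in the limit.

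I expect the main obstacle to be making the compactness and limiting argument rigorous without a priori curvature bounds. Two things must be controlled: first, a lower bound on $r_-(\widetilde M_k)$ away from zero, so that the $\widetilde M_k$ do not collapse to a lower-dimensional set; and second, enough regularity (e.g. a uniform bound on $\widetilde\kappa_n$, or at least on diameter, via convexity) to apply Blaschke selection and obtain a convex body limit $\widetilde M_\infty$ with $r_+ (\widetilde M_\infty)=1$. The lower bound on the inradius is the delicate point: one route is to invoke \cite{AndrewsEuc}*{Theorem 5.1}, which already gives that the pointwise ratio bound $\kappa_n\le(1+\varepsilon)\kappa_1+C_1(\varepsilon)$ — applied with a \emph{fixed} $\varepsilon$, absorbing $C_1(\varepsilon)$ using that on $\widetilde M_k$ the curvature scale $\widetilde\kappa_n$ is at least $1$ (since $r_+(\widetilde M_k)=1$ forces $\widetilde\kappa_n\ge 1$ somewhere, in fact everywhere a supporting-ball argument gives $\widetilde\kappa_n\le$ something) — yields a bound on the ratio of maximum to minimum width, hence $r_+(\widetilde M_k)\le c\, r_-(\widetilde M_k)$ for a constant $c$ depending only on $n$ and that fixed $\varepsilon$ and on $C_1(\varepsilon)/\widetilde\kappa_{\min}$. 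This pins $r_-(\widetilde M_k)$ below. Once the limit $\widetilde M_\infty$ exists as a convex body, one upgrades: the improving pinching (as $\varepsilon\to 0$ with the error going to $0$) shows $\widetilde M_\infty$ is a sphere in the sense that its width ratio is exactly $1$, so $r_+(\widetilde M_\infty)=r_-(\widetilde M_\infty)=1$, contradicting $r_-(\widetilde M_k)<\frac1{1+\rho}$ passing to the limit. Finally, unwinding the contradiction produces the claimed $C_2(\rho)$.
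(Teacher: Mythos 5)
Your strategy---contradiction, rescale so that $r_+(\widetilde M_k)=1$, extract a Hausdorff limit, and conclude it is a round sphere---is quite different from the paper's, which is a direct quantitative argument: the paper uses the pinching hypothesis to prove integral inequalities between the normalized mean cross-sectional volumes $V_1, V_n, V_{n+1}$, showing $V_1^{n+1}/V_{n+1}\leq 1+\delta$ when $r_+$ is small, and then invokes the Diskant inequalities to convert this into a bound on $r_+/r_-$. That route is robust precisely because mixed volumes are continuous under Hausdorff convergence and no pointwise curvature control is needed.

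Your compactness route, by contrast, has genuine gaps at the points you yourself flag. First, the uniform lower bound on $r_-(\widetilde M_k)$: you propose to obtain it from \cite{AndrewsEuc}*{Theorem 5.1}, but that theorem requires a \emph{multiplicative} pinching bound $\kappa_n\leq C\kappa_1$, whereas the hypothesis here is of mixed additive/multiplicative form $\kappa_n\leq(1+\varepsilon)\kappa_1+C_1(\varepsilon)$; to absorb the additive part you would need a uniform positive lower bound on $\kappa_1$ over all of $\widetilde M_k$, which you do not have (a convex hypersurface with $r_+=1$ can have points of arbitrarily small curvature). Second, even granting a limit convex body $\widetilde M_\infty$ via Blaschke selection, that convergence is only in Hausdorff distance; it does not transport the pointwise curvature pinching of $\widetilde M_k$ to $\widetilde M_\infty$. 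To conclude umbilicity of the limit you would need $C^2$ (or at least $C^{1,1}$) precompactness, i.e.\ two-sided uniform curvature bounds and equicontinuity of the second fundamental forms, none of which follow from the hypotheses; and your parenthetical claim that a supporting-ball argument bounds $\widetilde\kappa_n$ above is not correct (circumradius control does not bound curvature from above). There is also an interchange-of-limits issue: the error term $r_+(M_k)\,C_1(\varepsilon)$ only vanishes for fixed $\varepsilon$ as $k\to\infty$, so concluding ``all curvatures equal in the limit'' requires a careful diagonal argument. These obstacles are not cosmetic; they are exactly the technical difficulties the paper's integral argument is designed to sidestep, and the paper's route also has the advantage of producing an explicit $C_2(\rho)$ rather than a nonconstructive one.
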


\begin{proof}
Define the (normalized) $k$-dimensional mean cross-sectional volume $V_k(M)$ of $M$ by $V_k(M)=\frac{1}{|S^n|}\int_{M}E_{n-k}\,d\mu(g)$ if $0\leq k\leq n$, and $V_{k}=\frac{1}{|S^n|}\int_{M}s E_{n+1-k}\,d\mu(g)$ for $1\leq k\leq n+1$, where $s=\langle X,\nu\rangle$ is the support function.

The ratio of circumradius to inradius of a convex body can be estimated in terms of the mean cross-sectional volumes.  We will need only the following rather crude statement:

\begin{lemma}\label{lem:radiusratio}
For any $\varepsilon>0$ there exists $\delta(\varepsilon,n)>0$ such that any convex body satisfying
$$
\frac{V_1^{n+1}}{V_{n+1}}\leq 1+\delta
$$
has
$$
\frac{r_+(M)}{r_-(M)}\leq 1+\varepsilon.
$$
\end{lemma}

\begin{proof}
Since both $r_+/r_-$ and $V_1^{n+1}/V_{n+1}$ are scaling invariant, it suffices to assume $V_{n+1}=1$.
The Diskant inequalities \cite{Schneider}*{Theorem 6.2.3} give estimates for the inradius and circumradius:
$$
r_-(M)\geq V_n^{\frac1n}-\left(V_n^{\frac{n+1}{n}}-1\right)^{\frac{1}{n+1}}
\quad\text{and}
\quad
r_+(M)\leq \frac{1}{V_1^{\frac1n}-\left(V_1^{\frac{n+1}{n}}-1\right)^{\frac{1}{n+1}}}.
$$
The function $f(x) = x-\left(x^{n+1}-1\right)^{\frac{1}{n+1}}$ is decreasing in $x$, so the inequality $V_n\leq V_1^n$ implies
$$
r_-(M)\geq V_1-\left(V_1^{n+1}-1\right)^{\frac{1}{n+1}}.
$$
This gives
$$
\frac{r_-(M)}{r_+(M)}\geq \left(V_1-\left(V_1^{n+1}-1\right)^{\frac{1}{n+1}}\right)
\left(V_1^{\frac1n}-\left(V_1^{\frac{n+1}{n}}-1\right)^{\frac{1}{n+1}}\right).
$$
The Lemma follows since the right-hand side approaches $1$ as $V_1$ approaches $1$.
\end{proof}

In view of the Lemma, it suffices to show that the isoperimetric ratio $V_1^{n+1}/V_{n+1}$ can be made close to $1$ by requiring the inradius to be small.  To do this we first observe that the elementary symmetric functions can be compared:  For $1\leq k<\ell\leq n$ we can write 
\begin{equation}\label{eq:EkElineq}
E_k \leq \kappa_n^k\leq \left((1+\varepsilon)\kappa_1+C_1(\varepsilon)\right)^k\leq \left((1+\varepsilon)E_\ell^{1/\ell}+C_1(\varepsilon)\right)^k\leq (1+\varepsilon') E_\ell^{k/\ell}+C_{k,\ell}(\varepsilon')
\end{equation}
for some constant $C_{k,\ell}(\varepsilon')$, for any $\varepsilon'>0$, where we expanded the bracket using the Binomial theorem, used Young's inequality to estimate the resulting terms, and then chose $\varepsilon$ suitably depending on $\varepsilon'>0$.  Integrating \eqref{eq:EkElineq} with $k=n-1$, $\ell=n$ and dividing by $|S^n|$ gives:
\begin{align*}
V_1&\leq \frac{(1+\varepsilon)}{|S^n|}\int_ME_n^{1-1/n}\,d\mu(g)+C_{n-1,n}(\varepsilon)V_n\\
&\leq (1+\varepsilon)\left(\frac{1}{|S^n|}\int_ME_n\,d\mu\right)^{1-1/n} V_n^{1/n}+C_{n-1,n}(\varepsilon)V_n\\
&=(1+\varepsilon)V_n^{1/n}+C_{n-1,n}(\varepsilon)V_n\\
&\leq (1+\varepsilon)V_n^{1/n}\left(1+C_{n-1,n}(\varepsilon)r_+^{n-1}\right).
\end{align*}
Next take \eqref{eq:EkElineq} for $k=1$ and $\ell=n$, multiply by $s/|S^n|$, and integrate over $M$, yielding
\begin{align*}
V_n &=\frac{1}{|S^n|} \int_M E_1s \,d\mu\\
&\leq \frac{1+\varepsilon}{|S^n|}\int_ME_n^{1/n}s \,d\mu(g) + C_{1,n}(\varepsilon)V_{n+1}\\
&\leq (1+\varepsilon)\left(\frac{1}{|S^n|}\int_M E_ns\,d\mu(g)\right)^{1/n}
V_{n+1}^{\frac{n-1}{n}}+C_{1,n}(\varepsilon)V_{n+1}\\
&=(1+\varepsilon)V_1^{\frac1n}V_{n+1}^{\frac{n-1}{n}}+C_{1,n}(\varepsilon)V_{n+1}\\
&\leq (1+\varepsilon)V_1^{\frac1n}V_{n+1}^{\frac{n-1}{n}}\left(1+C_{1,n}(\varepsilon)r_+\right).
\end{align*}
From these two inequalities we have
$$
V_1^{n+1}\leq \left(1+\varepsilon\right)^{\frac{n^2+n}{n-1}}V_{n+1}
\left(1+C(\varepsilon)r_+^{n-1}\right)^{\frac{n^2}{n-1}}\left(1+C(\varepsilon)r_+\right)^{\frac{n^2}{n-1}},
$$
for any $\varepsilon>0$.  Given $\rho>0$, choose $\varepsilon>0$ such that $(1+\varepsilon)^{(n^2+n)/(n-1)}< \sqrt{1+\rho}$, and then choose $C_2(\rho)$ such that $(1+C_{n-1,n}(\varepsilon)C_2^{n-1})(1+C_{1,n}(\varepsilon)C_2)\leq (1+\rho)^{\frac{n-1}{2n^2}}$.  The Theorem follows.
\end{proof}

\begin{remark*}
\begin{enumerate}[label={\arabic*}.]
\item
The argument above also proves a result similar to \cite{AndrewsEuc}*{Theorem 5.1}.   While the precise statement of the result is rather messier than in \cite{AndrewsEuc}, it does give that the ratio of $r_+(M)$ to $r_-(M)$ approaches one if the pointwise ratio of principal curvatures approaches one.  The proof in \cite{AndrewsEuc} gives this only for the ratio of widths.
\item An anisotropic analogue also holds:  If $W$ is a smooth, uniformly convex Wulff shape enclosing the origin, and $M$ is a smooth convex hypersurface, then the \emph{relative normal} $\nu_W:\ M\to W$ takes a point in $M$ to the unique point in $W$ with the same outward normal vector.  The derivative of $\nu_W$ is the \emph{relative Weingarten map}, the eigenvalues $\kappa_1,\dots,\kappa_n$ of which are the \emph{relative principal curvatures}.  The \emph{relative support function} $s_W$ is defined by $X=s_W\nu_W+V$ where $V$ is tangent to $M$, and is given by $s_W = \frac{\langle X,\nu\rangle}{\langle \nu_W,\nu\rangle}$.  The relative $k$-dimensional volumes $V_k(M,W)$ are then defined by exactly the same formulae as $V_k(M)$ above, where $E_k$ is the elementary symmetric function of the relative principal curvatures.  The relative inradius $r_-(M)$ is the supremum of all $r$ such that a translate of $rW$ can be enclosed by $M$, and the relative cirumradius $r_+(M)$ is the infimum of all $r$ such that a translate of $rW$ encloses $M$.  Then Theorem 4 holds as stated (we remark the the Diskant inequality applies unchanged in this situation).
\end{enumerate}
\end{remark*}

\section{The flow equations}
Given a smooth, compact, uniformly convex initial hypersurface $M_{0}^n = X_{0} \left( M \right)$ with $n\geq 2$, we consider a family of embeddings $X: M \times \left[ 0, T\right) \rightarrow \mathbb{R}^{n+1}$ satisfying the evolution equation 
\begin{equation} \label{eq:flow}
  \frac{\partial X}{\partial t} \left( x, t\right) = - F \left( \mathcal{W}\left( x, t\right) \right) \nu\left( x, t\right)
\end{equation}
where $\nu$ denotes the outer unit normal to the evolving hypersurface $M_{t} = X\left( M, t\right) = X_{t}\left( M\right)$, and $F$ is an $O\left(n\right)$-invariant function of ${\mathcal W}$ with the following properties:

\begin{conditions} \label{T:Fconds}
\begin{enumerate}[label={(\roman*).}, ref={(\roman*)}]
  \item\label{item1} $F= f\circ \kappa$, where $\kappa$ is the map which takes a self-adjoint operator to its ordered eigenvalues, and $f$ is a smooth, symmetric function defined on an open symmetric cone $\Gamma\subseteq \Gamma_+$ containing $\{(c,\dots,c):\ c>0\}$, where $\Gamma_+=\{\kappa:\ \kappa_i>0,\ i=1,\dots,n\}$.
    \item\label{item2} $f$ is strictly monotone: $\frac{\partial f}{\partial \kappa_{i}} >0$ for each $i = 1, \ldots, n$, at each point of $\Gamma$.
  \item\label{item3} $f$ is homogeneous of degree $\alpha>1$: $f\left( k \kappa \right) = k^\alpha f\left( \kappa \right)$ for any $k >0$.
  \item\label{item4} $f$ is strictly positive and normalised to have $f\left( 1, \ldots, 1 \right) = n^\alpha$.
\end{enumerate}
\end{conditions}

\begin{remark*}
\begin{enumerate}[label={\arabic*.}, ref={\arabic*}]
\item Since $f$ is a smooth symmetric function by Condition \ref{item1}, it is also a smooth function
of the elementary symmetric functions of the principal curvatures \cite{Glaeser}, so $F$ is a smooth function of
the components of ${\mathcal W}$.
\item The positivity in Condition \ref{item4} follows from homogeneity (Condition \ref{item3}) and strict monotonicity (Condition \ref{item2}) using the Euler relation:  $\alpha F=\sum_i\kappa_i\frac{\partial f}{\partial\kappa_i}$.  The normalization is for convenience, and amounts to scaling the time parameter in equation \eqref{eq:flow}.  The particular choice is made to agree with powers of the mean curvature.
\item\label{Holdercond} Notably absent in our assumptions is any concavity or convexity of $f$.  This is usually indispensible as a condition of this kind is normally required to derive second derivative H\"older estimates for fully nonlinear parabolic equations.  However the situation in the present paper is sufficiently restricted that we can apply a result of Cordes-Nirenberg type which requires no concavity.  The required arguments are provided in Section \ref{sec:Cordes-Nirenberg}.
\end{enumerate}
\end{remark*}

Throughout this paper we sum over repeated indices from $1$ to $n$ unless otherwise indicated.  In computations on the hypersurface $M_{t}$, raised indices indicate contraction with the metric.

We denote by $ \dot{F}^{kl}$ the  derivatives of $F$ with respect to the components of its argument:
$$\left. \frac{\partial}{\partial s} F\left( A+sB\right) \right|_{s=0} = \dot F^{kl} \left( A\right) B_{kl} \mbox{.}$$
Similarly for the second partial derivatives of $F$ we write
$$\left. \frac{\partial^{2}}{\partial s^{2}} F\left( A+sB\right) \right|_{s=0} = \ddot F^{kl, rs} \left( A\right) B_{kl} B_{rs} \mbox{.}$$
If we do not indicate explicitly where derivatives of $F$ and of $f$ are evaluated then they are evaluated at $\mathcal{W}$ and $\kappa\left( \mathcal{W} \right)$ respectively.  We will use similar notation and conventions for other functions of matrices when we differentiate them.

In order to simplify the computations, and since we are not concerned with proving the main result  under the weakest possible pinching condition, we will assume $\Gamma=\{A:\ \|\Acir\|^2<\delta_0 H^2\}$ for some $\delta_0\in\left(0,\frac{1}{n(n-1)}\right)$, and that the second derivatives of $F$ are bounded, in the following sense:  There exists a constant $\mu\geq 0$ such that for any $A$ with $H=\tr A=1$ and $\kappa(A)\in\Gamma$, we have
$\left|\ddot{F}^{kl,rs}(A)B_{kl}B_{rs}\right|\leq\mu\|B\|^2$.
It follows by homogeneity that for arbitrary $A$ with $\kappa(A)\in\Gamma$, 
\begin{equation}\label{eq:D2Fbound}
\left|\ddot{F}^{kl,rs}(A)B_{kl}B_{rs}\right|\leq \mu H^{\alpha-2}\|B\|^2.
\end{equation}
It follows that
\begin{equation}\label{eq:DFbound}
\left(\alpha H^{\alpha-1}-\mu H^{\alpha-2}\|\Acir\|\right) I\leq \dot{F}\leq 
\left(\alpha H^{\alpha-1}+\mu H^{\alpha-2}\|\Acir\|\right) I,
\end{equation}
and that
\begin{equation}\label{eq:Fbound}
H^{\alpha}-\frac{\mu}{2\alpha}H^{\alpha-2}\|\Acir\|^2\leq F\leq H^{\alpha}+\frac{\mu}{2\alpha}H^{\alpha-2}\|\Acir\|^2.
\end{equation}

Our main result regarding the flows is the following:

\begin{theorem}\label{thm:main.flow}
Let $F$ satisfy Conditions \ref{T:Fconds}.  Then there exists $\delta>0$ depending only on $n$, $\alpha$ and $\mu$ such that for any
smooth, uniformly locally convex embedding $X_0:\ M^n\to\RR^{n+1}$ of a compact manifold $M$ satisfying $\|\Acir\|^2<\delta H^2$, there exists a unique solution $X:\ M\times [0,T)\to\RR^{n+1}$ of equation \eqref{eq:flow} with initial data $X_0$, and a point $p\in\RR^{n+1}$ such that $X(M,t)\to p$ as $t\to T$, and 
$\tilde X_t=\frac{X(.,t)-p}{((1+\alpha)(T-t))^{1/(1+\alpha)}}$ converges in $C^\infty$ to an embedding $X_\infty:\ M\to\RR^{n+1}$ with $X_\infty(M)=S^n_1(0)$.
\end{theorem}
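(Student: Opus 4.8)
The plan is to combine maximum‑principle estimates for the curvature with the geometric input of Theorem~\ref{thm:geombound}, and to feed the resulting bounds into the parabolic Cordes--Nirenberg theory of Section~\ref{sec:Cordes-Nirenberg}.

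\emph{Existence, preservation of pinching and of convexity.} Since $\dot F>0$ by Condition~\ref{item2} and $F$ is smooth, \eqref{eq:flow} is strictly parabolic while $M_t$ is smooth and convex, so there is a unique maximal smooth solution on an interval $[0,T)$; uniqueness follows from the usual linearisation argument. The central step is to show that the pinching $\|\Acir\|^2<\delta H^2$ is preserved for a sufficiently small $\delta=\delta(n,\alpha,\mu)$. I would compute the evolution of $\|\Acir\|^2/H^2$ (equivalently of $\|\Acir\|^2-\delta H^2$) from the standard evolution equations, under which $\p_t h_i{}^j$ picks up a term $\dot F^{kl}\nabla_k\nabla_l h_i{}^j$ together with reaction terms and terms involving $\ddot F$. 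The gradient terms are absorbed using the Kato-type inequality of Lemma~\ref{T:norm}; the zeroth-order terms split into a favourable term which, after invoking Lemma~\ref{T:pinch} and the cubic estimate of Lemma~\ref{T:Cest}, is bounded above by a negative multiple of $\|\Acir\|^2H^{\alpha-1}$, plus error terms from the non-concavity of $F$ which, by \eqref{eq:D2Fbound}, are at most a constant times $\mu\,\|\Acir\|^2H^{\alpha-1}$ times a factor tending to $0$ as $\delta\to0$. Choosing $\delta$ small makes the favourable term dominate, so by the maximum principle the pinching is preserved on $[0,T)$; a refinement applied to $\|\Acir\|^2/H^{2-\gamma}$ for small $\gamma>0$ gives the improved pinching $\|\Acir\|^2\le C_0H^{2-\gamma}$. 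Since $\delta<\delta_0<\tfrac1{n(n-1)}$, Lemma~\ref{T:pinch} keeps $M_t$ uniformly convex throughout.

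\emph{Barriers and near-sphericity.} A sphere of radius $R$ evolving by \eqref{eq:flow} satisfies $\tfrac{d}{dt}R^{1+\alpha}=-(1+\alpha)n^\alpha$, so it contracts to a point in finite time; by the avoidance principle $M_t$ is trapped between an enclosed and an enclosing shrinking sphere, whence $T<\infty$, $M_t$ converges to a point $p$, and both $r_-(M_t)$ and $r_+(M_t)$ are comparable to $\lambda(t):=\bigl((1+\alpha)n^\alpha(T-t)\bigr)^{1/(1+\alpha)}$. The improved pinching together with Lemma~\ref{T:pinch} shows that $M_t$ satisfies $\kappa_n\le(1+\varepsilon)\kappa_1+C_1(\varepsilon)$ for every $\varepsilon>0$, with $C_1(\varepsilon)$ independent of $t$ (for $H$ large this is Lemma~\ref{T:pinch} with $\e=C_0H^{-\gamma}\to0$, and for $H$ bounded it follows from \eqref{eq:tracelessA}). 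Since $r_+(M_t)\to0$ as $t\to T$, Theorem~\ref{thm:geombound} then gives, for each $\rho>0$, a time $t_\rho<T$ with $r_+(M_t)\le(1+\rho)r_-(M_t)$ for all $t\in[t_\rho,T)$.

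\emph{Two-sided speed bounds and rescaling.} An upper bound on the speed comes from a Tso-type argument: for $t$ near $T$ the support function $s=\langle X-p,\nu\rangle$ is positive, and $F/(s-\sigma)$ for a suitable $\sigma$ satisfies a parabolic differential inequality with a good quadratic reaction term, yielding $F\le C\lambda(t)^{-\alpha}$, hence $\|A\|\le C\lambda(t)^{-1}$ by \eqref{eq:Fbound} and Lemma~\ref{T:pinch}. For the matching lower bound I would use the near-sphericity together with comparison with shrinking inscribed spheres and the maximum principle (note $\p_tF=\dot F^{ij}\nabla_i\nabla_jF+F\,\dot F^{ij}h_i{}^{k}h_{kj}\ge\dot F^{ij}\nabla_i\nabla_jF$, so $\min_MF$ is non-decreasing): once $M_t$ lies in a thin spherical shell of radius $\asymp\lambda(t)$ the speed cannot fall below $c\lambda(t)^{-\alpha}$, which is the ``positive lower bound on the speed'' referred to in the introduction and the second place Theorem~\ref{thm:geombound} enters. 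With $F\asymp\lambda(t)^{-\alpha}$ and $\|A\|\asymp\lambda(t)^{-1}$, the rescaled embeddings $\tilde X_t$ have second fundamental form bounded above and below and solve a uniformly parabolic equation.

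\emph{Regularity, convergence, and the main obstacle.} By \eqref{eq:DFbound} the coefficients $\dot F$ of the linearised operator are within $O(\mu\sqrt\delta)$ of a multiple of the identity, so after rescaling the parabolic Cordes--Nirenberg estimate of Section~\ref{sec:Cordes-Nirenberg} applies and gives a uniform $C^{2,\beta}$ bound on $\tilde X_t$; differentiating the equation and iterating Schauder estimates then yields uniform $C^\infty$ bounds. Along any sequence $t_k\to T$ a subsequence of $\tilde X_{t_k}$ converges in $C^\infty$ to a smooth convex embedding; the improved pinching rescales to $\|\Acir\|\to0$ for the rescaled hypersurfaces, so every such limit is totally umbilic and hence a round sphere, which the normalisation forces to be $S^n_1(0)$, and an estimate for the drift of the centre upgrades this to full $C^\infty$ convergence of $\tilde X_t$ to $S^n_1(0)$. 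I expect the main obstacle to be the preservation and improvement of the pinching in the absence of any concavity assumption on $F$: the $\ddot F$-errors are of the same order $\|\Acir\|^2H^{\alpha-1}$ as the single favourable term produced by Lemmas~\ref{T:pinch} and~\ref{T:Cest}, so the estimate closes only because $\delta$ is taken small (depending on $n$, $\alpha$, $\mu$), which makes the relative size of those errors small; the other substantial input is the parabolic Cordes--Nirenberg estimate of Section~\ref{sec:Cordes-Nirenberg}, which is what the nearly-umbilic setting buys in place of concavity.
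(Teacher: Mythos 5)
Your proposal follows the same architecture as the paper: preserve the pinching $\|\Acir\|^2<\sigma H^2$ by a maximum-principle computation built on Lemmas~\ref{T:norm}, \ref{T:pinch}, \ref{T:Cest} (Theorem~\ref{thm:pres.pinch}); improve it to a decaying bound $\|\Acir\|^2\lesssim H^{2-\lambda}$ (Theorem~\ref{thm:imp.pinch}), which gives $\kappa_n\le(1+\varepsilon)\kappa_1+C_1(\varepsilon)$ (Corollary~\ref{cor:weak.pinch}) and hence, via Theorem~\ref{thm:geombound}, a ratio $r_+/r_-\to1$; couple an upper speed bound of Tso type (Proposition~\ref{prop:curv.bound}) with a matching lower speed bound (Proposition~\ref{prop:speedbound}); and feed the resulting two-sided bounds into the Cordes--Nirenberg estimate of Section~\ref{sec:Cordes-Nirenberg} and Schauder theory for higher regularity, just as in Sections~\ref{sec:Holderbounds}--9. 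That is indeed the paper's proof, and you have also correctly identified the main technical obstruction (absence of concavity of $F$, which is why both the small-$\delta$ pinching and the Cordes--Nirenberg input are indispensable). However two of your steps, as stated, have genuine gaps.

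First, ``by the avoidance principle $M_t$ is trapped between an enclosed and an enclosing shrinking sphere, whence $T<\infty$, $M_t$ converges to a point $p$'' does not quite follow. The barriers give $T<\infty$ and bound $T$ between the lifetimes of the two initial spheres, but they do not force the hypersurfaces to shrink to a point: the inner barrier has already vanished while the outer still has positive radius, so the diameter of $M_t$ is not controlled by this alone. The paper (Proposition~\ref{prop:point}) argues by contradiction: if $r_-(M_t)$ stayed bounded below, Proposition~\ref{prop:curv.bound} bounds $F$ above, Proposition~\ref{prop:high.reg} bounds all derivatives, and the solution extends past $T$, contradicting maximality; the circumradius then goes to zero too by an independent estimate. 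Convergence to a point is a consequence of the regularity theory, not an input to it.

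Second, your lower speed bound step is vague at the crucial place. The observation that $\min_M F$ is non-decreasing (from the evolution of $F$) only yields a time-independent lower bound $F\ge\inf_{M_0}F$, which is useless here since we need $F\gtrsim(T-t)^{-\alpha/(1+\alpha)}$. ``Comparison with shrinking inscribed spheres'' gives a lower bound on the inradius, not on the speed. The paper's mechanism (Lemma~\ref{lem:Smoc}, adapted from Smoczyk) is the estimate $\langle x-p,\nu\rangle+(1+\alpha)(t-t_0)F\ge0$ for any $p$ enclosed by $M_{t_0}$, obtained by comparing the solution with its dilations about $p$; choosing $p$ on the far side of $M_{t_0}$ and using the near-sphericity from Theorem~\ref{thm:geombound} to control $\langle x-p,\nu\rangle$ produces the required decay. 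Without this (or an equivalent device), the lower bound on the speed — and therefore the two-sided curvature bounds after rescaling and the whole regularity argument — does not close. Your final passage to full $C^\infty$ convergence by subsequential compactness plus uniqueness of the limiting sphere differs from the paper's route (exponential decay of $\|\Acir\|$ and its derivatives via interpolation, then the normalised flow argument of Huisken); both are viable, though the paper's gives an explicit rate.
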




Short time existence of a solution to equation \eqref{eq:flow} for smooth, unformly convex initial data is well known (see \cite{AndrewsEuc}, for example).  
The following evolution equations are derived as in \cite{AndrewsEuc}.

\begin{lemma} \label{T:evlnG}
  Under the flow \eqref{eq:flow}, the following evolution equations hold:
  \begin{enumerate}[label={(\roman*)}]
  \item\label{evol.s.f} $\frac{\partial}{\partial t}\langle X,\nu\rangle = \LL\langle X,\nu\rangle+\langle X,\nu\rangle\dot{F}^{kl} h_{km} h^{m}{}_l-(1+\alpha)F$;
    \item  $\frac{\partial}{\partial t}h^{i}_{\> j}=\LL h^{i}{}_{j}  +  \ddot{F}^{kl, rs} \nabla^{i}  h_{k l} \nabla_{j} h_{r s} + \dot{F}^{kl} h_{km} h^{m}{}_l h^{i}{}_j + \left( 1- \alpha \right) F h^{im} h_{mj}$, and
    \item for any smooth symmetric function $G\left( \mathcal{W} \right) = g\left( \kappa\left( \mathcal{W} \right) \right)$,
\begin{multline*}
  \frac{\partial}{\partial t} G = \LL G + \left[ \dot G^{ij} \ddot{F}^{kl, rs} - \dot{F}^{ij} \ddot G^{kl, rs} \right] \nabla_{i} h_{kl} \nabla_{j} h_{rs} + \dot{F}^{kl} h_{km} h^{m}{}_{l} \dot G^{ij}h_{ij}
  + \left( 1 - \alpha \right) F\dot G^{ij} h_{im} h^{m}{}_{j}  \mbox{,}
 \end{multline*}
   where $\LL G = \dot F^{ij} \nabla_{i} \nabla_{j} G$.
 In particular
    \item\label{evolF} $\frac{\partial}{\partial t}F = \LL F +  \dot{F}^{kl} h_{km} h^{m}{}_{l} F$, and 
   \item $\frac{\partial}{\partial t}H = \LL H +  \ddot{F}^{kl, rs} \nabla^{i}  h_{kl} \nabla_{i} h_{rs} +  \dot{F}^{kl} h_{km} h^{m}{}_{l} H + \left( 1 - \alpha \right) F \left\| A \right\|^{2}$.
   \end{enumerate}
   \end{lemma}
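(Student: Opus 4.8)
The plan is to reduce everything to the standard first-variation formulas for a normal flow and then differentiate. Along $\partial_t X = -F\nu$ one has, with the sign conventions of the paper, $\partial_t g_{ij} = -2Fh_{ij}$, $\partial_t g^{ij} = 2Fh^{ij}$, $\partial_t\nu = \nabla F$ (the tangential gradient), and $\partial_t h_{ij} = \nabla_i\nabla_j F - Fh_{ik}h^k{}_j$, hence $\partial_t h^i{}_j = \nabla^i\nabla_j F + Fh^{ik}h_{kj}$. These follow by commuting $\partial_t$ with $\partial_i$ on $X$, using that $\partial_t X$ is purely normal together with the Gauss--Weingarten relations, exactly as in \cite{HuiskenMCF} and \cite{AndrewsEuc}; I would simply quote them.

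For (i), set $u = \langle X,\nu\rangle$. Differentiating, $\partial_t u = \langle -F\nu,\nu\rangle + \langle X,\nabla F\rangle = -F + \langle X,\nabla F\rangle$. To rewrite the transport term I compute $\LL u = \dot F^{ij}\nabla_i\nabla_j u$ using $\nabla_i u = h_i{}^k\langle X,\partial_k X\rangle$ and $\nabla_j\langle X,\partial_k X\rangle = g_{jk} - u\,h_{jk}$; the Codazzi identity gives $\dot F^{ij}\nabla_j h_i{}^k = \nabla^k F$, so that $\langle X,\nabla F\rangle$ reappears, and Euler's relation $\dot F^{ij}h_{ij} = \alpha F$ (from homogeneity of degree $\alpha$) produces $\LL u = \langle X,\nabla F\rangle + \alpha F - u\,\dot F^{kl}h_{km}h^m{}_l$. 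Substituting back gives $\partial_t u = \LL u + u\,\dot F^{kl}h_{km}h^m{}_l - (1+\alpha)F$, which is (i).

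For (ii), I would expand $\nabla^i\nabla_j F$ by the chain rule, $\nabla_a\nabla_b F = \dot F^{kl}\nabla_a\nabla_b h_{kl} + \ddot F^{kl,rs}\nabla_a h_{kl}\nabla_b h_{rs}$, then use Codazzi to symmetrise the third-order terms and the Ricci identity together with the Gauss equation $R_{ijkl} = h_{ik}h_{jl} - h_{il}h_{jk}$ to commute $\dot F^{kl}\nabla_i\nabla_j h_{kl}$ into $\LL h^i{}_j = \dot F^{kl}\nabla_k\nabla_l h^i{}_j$ plus terms cubic in $h$. Combining these with the $Fh^{ik}h_{kj}$ term from the variation and contracting the $\dot F^{kl}h_{kl}$ factors via Euler's relation collapses the zeroth-order terms to $\dot F^{kl}h_{km}h^m{}_l h^i{}_j + (1-\alpha)Fh^{im}h_{mj}$, giving (ii). For a general symmetric $G = g\circ\kappa$ one has $\partial_t G = \dot G^{ij}\partial_t h_{ij}$ and $\nabla_a\nabla_b G = \dot G^{ij}\nabla_a\nabla_b h_{ij} + \ddot G^{ij,kl}\nabla_a h_{ij}\nabla_b h_{kl}$, so $\dot G^{ij}\LL h_{ij} = \LL G - \dot F^{ab}\ddot G^{ij,kl}\nabla_a h_{ij}\nabla_b h_{kl}$; feeding (ii) into the first of these and comparing yields the stated equation with the bracket $[\dot G^{ij}\ddot F^{kl,rs} - \dot F^{ij}\ddot G^{kl,rs}]$. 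Then (iv) is the case $G = F$ (the bracket vanishes and $\dot F^{ij}h_{ij} = \alpha F$ collapses the rest to $\dot F^{kl}h_{km}h^m{}_l F$), and (v) is $G = H = \tr\mathcal W$ (so $\dot H^{ij} = g^{ij}$, $\ddot H = 0$).

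The computation is entirely routine tensor calculus, and the only place care is needed is in (ii): one must keep straight whether $F$ is differentiated as a function of $h_{ij}$ or of the mixed tensor $h^i{}_j$ (the metric itself evolves), must use the correct signs in the Ricci commutator and in the Gauss equation, and must track the degree of homogeneity precisely so that the coefficient of the $h^{im}h_{mj}$ term comes out exactly $(1-\alpha)$. All of this is carried out in \cite{AndrewsEuc}, to which the proof can appeal.
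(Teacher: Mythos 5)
Your derivation is correct and follows the standard route—first-variation formulas for a normal flow, Codazzi plus the Gauss equation to obtain the Simons-type identity for $\dot F^{kl}\nabla_k\nabla_l h^i{}_j$, and Euler's relation to collapse the zeroth-order terms—which is exactly the computation the paper invokes by citing \cite{AndrewsEuc}. The only difference is that the paper simply refers to that reference rather than reproducing the computation.
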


\section{Preserving pinching}

In this section we prove the following for solutions of equation \eqref{eq:flow}:

\begin{theorem}\label{thm:pres.pinch}
There exists $\delta_1>0$ depending on $n$, $\mu$ and $\alpha$ such that if $0<\sigma\leq\min\{\delta_0,\delta_1\}$, and $\left\| \Acir \right\|^{2} < \sigma H^2$ at every point at $t=0$, then this remains true as long as the solution exists.
\end{theorem}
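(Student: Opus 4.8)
The plan is to run a maximum-principle argument on the quantity $Q = \|\Acir\|^2 - \sigma H^2 = \|A\|^2 - (\tfrac1n+\sigma)H^2$, showing that $\sup_{M_t} Q/H^2$ cannot increase past $\sigma$. Since $Q<0$ throughout the flow by hypothesis at $t=0$, it suffices to show that at any interior spacetime point where $\|\Acir\|^2 = \sigma H^2$ (so $Q=0$) with $\sigma\le\delta_1$, the evolution forces $Q$ to decrease. Using Lemma \ref{T:evlnG}\ref{evolF} together with the evolution of $\|A\|^2$ and $H$ from Lemma \ref{T:evlnG}, one computes
\begin{align*}
\frac{\p}{\p t}Q &= \LL Q + \text{(gradient terms)} + \dot F^{kl}h_{km}h^m{}_l\, Q + (1-\alpha)F\left(2C - 2\left(\tfrac1n+\sigma\right)H\|A\|^2\right),
\end{align*}
where $C=\kappa_1^3+\dots+\kappa_n^3$ and the gradient terms are the difference of the Hessian-type contractions $\ddot F^{kl,rs}$ appearing in the equations for $\|A\|^2$ and for $H$. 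At a point where $Q=0$ the zeroth-order term $\dot F^{kl}h_{km}h^m{}_l\,Q$ vanishes, and since $\alpha>1$ the reaction term $(1-\alpha)F\bigl(2C-2(\tfrac1n+\sigma)H\|A\|^2\bigr)$ is $\le -\tfrac{2(\alpha-1)F}{n}\bigl(nC-(1+n\sigma)H\|A\|^2\bigr)$, which by Lemma \ref{T:Cest} is bounded above by $-c(n)(\alpha-1)F\,\sigma H^3 < 0$ when $\sigma>0$. So the reaction term has the good sign; the issue is controlling the gradient terms.

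The main obstacle is precisely that gradient term: because we assume no concavity or convexity of $f$, the combination $\ddot F^{kl,rs}\nabla_i h_{kl}\nabla^i h_{rs}$ entering $\tfrac{\p}{\p t}\|A\|^2$ and $\tfrac{\p}{\p t}H$ does not have a favorable sign a priori. The strategy is to absorb it using (i) the pinching itself, which via \eqref{eq:D2Fbound} gives $|\ddot F^{kl,rs}B_{kl}B_{rs}|\le \mu H^{\alpha-2}\|B\|^2$, and (ii) the gradient inequality of Lemma \ref{T:norm}, which says $\|\nabla\Acir\|^2\ge \tfrac{2(n-1)}{3n}\|\nabla A\|^2$ and $\|\nabla A\|^2\ge\tfrac{3}{n+2}\|\nabla H\|^2$. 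Concretely, after adding the appropriate multiple of $\LL(\sigma H^2)$ to $\LL\|A\|^2$ one finds the genuine diffusion produces a term proportional to $-\dot F^{ij}\nabla_i\nabla_j$ of a positive quantity plus $+2\dot F^{ij}\nabla_i h_{kl}\nabla_j h^{kl}$-type good gradient terms of size $\gtrsim \alpha H^{\alpha-1}\|\nabla A\|^2$ (using \eqref{eq:DFbound}), while the bad $\ddot F$ terms are of size $\lesssim \mu H^{\alpha-2}\cdot H\cdot\|\nabla A\|^2 \sim \mu H^{\alpha-1}\|\nabla A\|^2$ after using $|A|\lesssim H$ from Lemma \ref{T:pinch}. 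Thus for $\delta_1$ small enough — small enough that $\|\Acir\|$ is small relative to $H$, so the $\mu$-terms in \eqref{eq:DFbound} are a small perturbation of $\alpha H^{\alpha-1}I$ — the good gradient terms dominate the bad ones and, combined with the strictly negative reaction term, we get $\tfrac{\p}{\p t}Q \le \LL Q + (\text{nonpositive})$ at any point with $Q=0$.

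Having arranged $\tfrac{\p}{\p t}Q\le \LL Q$ (i.e., $\tfrac{\p}{\p t}(Q/H^2)\le \LL(Q/H^2) + \text{lower order}$, or more cleanly working with $Q$ directly since the zeroth order coefficient vanishes where $Q=0$) at any point where $Q$ attains the value $0$, the conclusion follows from the parabolic maximum principle: since $Q<0$ everywhere at $t=0$ and $M$ is compact, $Q$ remains negative, i.e. $\|\Acir\|^2<\sigma H^2$ is preserved. One technical point worth noting is that the argument must be phrased for the scale-invariant ratio (or equivalently one tracks $\sup Q_\sigma := \sup(\|\Acir\|^2-\sigma H^2)$ and shows it cannot reach $0$) since $Q$ itself is not scale-invariant and $H$ is changing; the clean way is to fix $\sigma$, suppose for contradiction that $\|\Acir\|^2<\sigma H^2$ fails at a first time $t_0$, apply the maximum principle at an interior maximum point of $Q$ at time $t_0$ where necessarily $Q=0$, $\nabla Q=0$, $\p_t Q\ge 0$, $\LL Q\le 0$, and derive a contradiction from the strict negativity of the reaction term together with the gradient estimate. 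Choosing $\delta_1$ depending only on $n,\alpha,\mu$ as dictated by the perturbation bounds above completes the proof.
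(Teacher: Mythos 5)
The overall architecture of your argument matches the paper: compute the evolution equation for $Q = Z_\sigma = \|\Acir\|^2 - \sigma H^2$ from Lemma~\ref{T:evlnG}, note the zeroth-order term vanishes at a first zero of $Q$, use Lemma~\ref{T:Cest} for the strictly negative reaction term, and absorb the $\ddot F$ gradient terms into the good $\dot F$ gradient term via \eqref{eq:D2Fbound}, \eqref{eq:DFbound}, and Lemma~\ref{T:norm}. This is exactly the paper's strategy.

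However, there is a genuine gap in the quantitative estimate of the bad $\ddot F$ gradient term, and your stated reason for why $\delta_1$ small closes the argument does not actually do so. You estimate the coefficient of $\ddot F^{kl,rs}\nabla_i h_{kl}\nabla_j h_{rs}$ crudely by $|A|\lesssim H$, giving a bound $\lesssim \mu H^{\alpha-1}\|\nabla A\|^2$. Since the good term is only $\gtrsim \frac{2(n-1)}{3n}(\alpha - \mu\sqrt\sigma)H^{\alpha-1}\|\nabla A\|^2$, the comparison you are trying to make is essentially $\mu$ versus a dimensional constant times $\alpha$, and $\mu$ does not depend on $\delta_1$. So if $\mu$ is large, no choice of $\delta_1$ makes your bound on the bad term smaller than the good term. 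The improvement to $\dot F$ from \eqref{eq:DFbound} when $\|\Acir\|$ is small is real, but it only helps the good term; the bound on $\ddot F$ from \eqref{eq:D2Fbound} is $\mu H^{\alpha-2}$ regardless of pinching and does not improve.

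The missing observation is that in the evolution equation \eqref{eq:dtZsigma} the $\ddot F$ term carries the \emph{specific} coefficient $2\bigl(h^{ij} - (\tfrac1n + \sigma)Hg^{ij}\bigr)$, not just $h^{ij}$. At a point where $Q=0$, i.e.\ $\|A\|^2 = (\tfrac1n+\sigma)H^2$, a direct computation gives
$$
\left\|h - \left(\tfrac1n+\sigma\right)Hg\right\|^2 = \|A\|^2 - 2\left(\tfrac1n+\sigma\right)H^2 + n\left(\tfrac1n+\sigma\right)^2H^2 = \sigma(1+n\sigma)H^2,
$$
so this coefficient has norm $\sqrt{\sigma(1+n\sigma)}\,H$, not merely $O(H)$. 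This produces the crucial factor of $\sqrt\sigma$: the bad term is bounded by $2\sqrt{\sigma(1+n\sigma)}\,\mu\,H^{\alpha-1}\|\nabla A\|^2$, which does vanish as $\sigma\to 0$. Only with this factor does the bracket of gradient coefficients, namely $-\tfrac{4(n-1)}{3n}(\alpha-\mu\sqrt\sigma) + 2\sqrt{\sigma(1+n\sigma)}\mu + \tfrac{2(n+2)}{3}\sigma(\alpha+\mu\sqrt\sigma)$, become nonpositive for $\sigma\le\delta_1$ with $\delta_1$ depending only on $n,\alpha,\mu$. You should replace the $|A|\lesssim H$ estimate with this sharper one to close the argument.
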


\begin{proof}
Using Lemma \ref{T:evlnG} we compute the evolution equation for $Z_\sigma= \left\|\Acir\right\|^2- \sigma H^2$:
\begin{align}\label{eq:dtZsigma}
\frac{\partial Z_\sigma}{\partial t} &= \LL Z_\sigma + 2\! \left(\! h^{ij} \!-\! \left( n^{-1} \!+\! \sigma \!\right)\! H g^{ij}\! \right)\!\! \ddot{F}^{kl, rs} \nabla_{i} h_{kl} \nabla_{j} h_{rs} - 2 \dot{F}^{ij} \!\left( \!\!\nabla_{i} h_{kl} \nabla_{j} h^{kl} - n^{-1} \nabla_{i} H \nabla_{j} H \!\!\right)\notag\\
  &\quad\null+ 2 \sigma \dot{F}^{ij} \nabla_{i} H \nabla_{j} H + 2 \dot{F}^{ij} h_{im} h^{m}_{\ \> j} Z_\sigma + \frac{2}{n} \left( 1 - \alpha \right) F \left( nC - \left( 1 + n \sigma \right) H \left\| A^{2} \right\| \right) \mbox{.}
\end{align}
We argue that $Z_\sigma$ remains negative if it is initially so: Otherwise there exists a first point and time $(x_0,t_0)$ with $Z_\sigma=0$, where $\LL Z_\sigma\leq 0$ and $\partial_tZ_\sigma\geq 0$.  The second-last term in \eqref{eq:dtZsigma} is zero at $(x_0,t_0)$ since $Z_\sigma=0$, and the last is  negative by Lemma \ref{T:Cest}.  It suffices to show that the terms involving $\nabla h$ are non-positive, since then the right-hand side is negative while the left is non-negative.

The $\ddot{F}$ term we estimate using Equation \eqref{eq:D2Fbound}:
\begin{align*}
  2 \left| \left[ h^{ij} - \left(\frac{1}{n}+\sigma \right) H g^{ij} \right] \ddot{F}^{kl, rs} \nabla_{i} h_{kl} \nabla_{j} h_{rs} \right|
 & \leq 2 \left\|h^{ij}-\left(\frac{1}{n}+\sigma \right) H g^{ij}\right\|\left\|\ddot{F}\right\|\|\nabla A\|^2\\
  &\leq 2\sqrt{\sigma(1+n\sigma)}H\left\|\ddot{F}\right\|\|\nabla A\|^2\\
  &\leq 2\sqrt{\sigma(1+n\sigma)}H^{\alpha-1}\mu\|\nabla A\|^2.
\end{align*}
The next term is the good negative term which we estimate as follows:
\begin{align*}
  \dot{F}^{ij} \left[ \nabla_{i} h_{kl} \nabla_{j} h^{kl} - \frac{1}{n} \nabla_{i} H \nabla_{j} H \right]
  &=  \dot{F}^{ij}  \left( \nabla_{i} h_{kl} - \frac{1}{n} g_{kl} \nabla_{i}H \right) \left( \nabla_{j} h^{kl} - \frac{1}{n} g^{kl} \nabla_{j} H \right)\\
  &\geq \left(\alpha H^{\alpha-1}-\mu H^{\alpha-2}\|\Acir\|\right)\left\|\nabla \Acir\right\|^2\\
  &\geq \frac{2\left( n-1\right)}{3n}H^{\alpha-1}\left(\alpha -\mu \sqrt{\sigma}\right)\left\|\nabla A\right\|^2 \mbox{,}
\end{align*}
where we used Equation \eqref{eq:DFbound} and Lemma \ref{T:norm}.
The same estimates also control the remaining term:
\begin{align*}
\left|2 \sigma \dot{F}^{ij} \nabla_{i} H \nabla_{j} H\right|
&\leq 2\sigma\left(\alpha+\mu\sqrt{\sigma}\right)H^{\alpha-1}\|\nabla H\|^2\\
&\leq \frac{2(n+2)}{3}\sigma\left(\alpha+\mu\sqrt{\sigma}\right)H^{\alpha-1}\|\nabla A\|^2.
\end{align*}

Putting these estimates together, the gradient terms at $(p_0,t_0)$ in \eqref{eq:dtZsigma} are no greater than
$$
\left(-\frac{4(n-1)}{3n}(\alpha-\mu\sqrt{\sigma})+2\sqrt{\sigma(1+n\sigma)}\mu
+\frac{2(n+2)}{3}\sigma\left(\alpha+\mu\sqrt{\sigma}\right)\right)H^{\alpha-1}\|\nabla A\|^2.
$$
The bracket is clearly nonpositive on $0\leq \sigma\leq \delta_1$ for some $\delta_1>0$ depending on $n$, $\mu$ and $\alpha$.\end{proof}

\begin{remark}
By Lemma \ref{T:evlnG} \ref{evolF}, the minimum of $F$ does not decrease in $t$.  Since $(n\kappa_{\max})^\alpha=f(\kappa_{\max},\dots,\kappa_{\max})\geq f(\kappa_1,\dots,\kappa_n)$, $\kappa_{\max}$ remains strictly positive and $H > 0$ by Lemma \ref{T:pinch}.
\end{remark}

\begin{remark} We used the second derivative bound $\mu$, but no concavity of $f$.  One-sided second derivative bounds might suffice, but mere parabolicity does not (unless $n=2$ as shown in \cite{Andrews2D}) --- for example no pinching cones exist if $F=(H+c\kappa_2)^\alpha$ with $c>3$ for $n=3$.
\end{remark}

\section{Curvature bound}

Next we prove upper bounds on the speed, by using an argument of Tso \cite{Tso} to show that the speed remains bounded if the inradius stays positive.
Suppose $r_-(M_t)\geq r_0$ for $0\leq t\leq t_0$.  Take the origin to be the centre of the insphere of $M_{t_0}$, so that $M_t$ encloses $B_{r_0}(0)$ for $0\leq t\leq t_0$.  
Let $Q=\frac{F}{2\langle X,\nu\rangle-r_0}$.  By Theorem \ref{thm:pres.pinch} and Lemma \ref{T:pinch}, $\kappa_{\min}\geq \frac{1-\sqrt{n(n-1)\sigma}}{1+\sqrt{n(n-1)\sigma}}\kappa_{\max}$ with $\sigma<\min\{\delta_0,\delta_1\}$, so
$$
\dot{F}^{kl}h_k{}^ph_{pl}\geq \alpha \kappa_{\min} F\geq \frac{\alpha(1-\sqrt{n(n-1)\sigma})}{n(1+\sqrt{n(n-1)\sigma})}F^{1+1/\alpha},
$$
where we observed $F=f(\kappa_1,\dots,\kappa_n)\leq f(\kappa_{\max},\dots,\kappa_{\max})=\left(n\kappa_{\max}\right)^\alpha$.  Using this we compute
\begin{align*}
\frac{\partial Q}{\partial t}
&= \LL Q + 4\dot{F}^{kl}\frac{\nabla_k\langle x,\nu\rangle}{2\langle X,\nu\rangle-r_0}\nabla_lQ
+\frac{F}{(2\langle X,\nu\rangle-t_0)^2}\left(r_0\dot{F}^{kl}h_k{}^ph_{pl}-(1+\alpha)F\right)\\
&\leq \LL Q+ 4\dot{F}^{kl}\frac{\nabla_k\langle X,\nu\rangle}{2\langle X,\nu\rangle-r_0}\nabla_lQ
+(1+\alpha)Q^2 - r_0^{1+1/\alpha}\tilde C Q^{2+1/\alpha},
\end{align*}
where $\tilde C=
\frac{\alpha(1-\sqrt{n(n-1)\sigma})}{n(1+\sqrt{n(n-1)\sigma}}$, and we used $2\langle X,\nu\rangle-r_0\geq r_0$.  The maximum principle then implies
$$
Q\leq \max\left\{\left(\frac{2(1+\alpha)}{\tilde C}\right)^\alpha r_0^{-(1+\alpha)},
\left(\frac{(1+\alpha)\tilde C}{2\alpha}\right)^{-\alpha/(1+\alpha)}r_0^{-1}t^{-\alpha/(1+\alpha)}\right\}.
$$
It follows that $F$ is bounded, and hence so is $\kappa_{\max}$.  We have proved the following:

\begin{proposition}\label{prop:curv.bound}
Suppose $X:\ M\times [0,t_0]\to\RR^{n+1}$ is a solution of \eqref{eq:flow} with $\|\Acir\|^2<\min\{\delta_0,\delta_1\}H^2$ at $t=0$.  Then there exists $C_+$ depending only on $n$, $\alpha$, $\mu$, $r_+(M_0)$ and $r_-(M_{t_0})$ such that $\kappa_{\max}(x,t)\leq C_+\left(1+t^{-1/(1+\alpha)}\right)$
for all $(x,t)\in M\times(0,t_0]$.
\end{proposition}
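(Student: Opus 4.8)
The plan is to adapt K.-S.~Tso's argument \cite{Tso} for the Gauss curvature flow to the present, more general, speed. First I would use that the flow is contracting: since $F>0$ the enclosed regions $\Omega_t$ decrease in $t$, so if we set $r_0=r_-(M_{t_0})$ and take the origin at the centre of the insphere of $M_{t_0}$, then $B_{r_0}(0)\subseteq\Omega_{t_0}\subseteq\Omega_t$ for every $t\in[0,t_0]$, and moreover every $M_t$ with $t\in[0,t_0]$ lies in $\Omega_0$, hence in a ball of radius $r_+(M_0)$ whose centre is at distance $<r_+(M_0)$ from the origin. In particular the support function obeys $r_0\le\langle X,\nu\rangle\le 2r_+(M_0)$ on $M\times[0,t_0]$, so Tso's quantity
\[
Q=\frac{F}{2\langle X,\nu\rangle-r_0}
\]
is smooth and positive, with $r_0\le 2\langle X,\nu\rangle-r_0\le 4r_+(M_0)$.

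Next I would compute the evolution of $Q$ from the equations for $F$ and $\langle X,\nu\rangle$ in Lemma~\ref{T:evlnG}. The parabolic contributions should organise into $\LL Q$ plus a drift term $\tfrac{4\dot F^{kl}\nabla_k\langle X,\nu\rangle}{2\langle X,\nu\rangle-r_0}\,\nabla_l Q$ (the potentially harmful gradient terms cancel), leaving a zero-order part of the form
\[
\frac{F}{(2\langle X,\nu\rangle-r_0)^2}\Bigl(2(1+\alpha)F-r_0\,\dot F^{kl}h_{km}h^m{}_l\Bigr).
\]
The point is that the subtracted term is strongly coercive in $F$: by Theorem~\ref{thm:pres.pinch} and Lemma~\ref{T:pinch} the pinching persists, so $\kappa_{\min}\ge c\,\kappa_{\max}$ with $c=c(n,\alpha,\mu)>0$; then Euler's relation gives $\dot F^{kl}h_{km}h^m{}_l=\sum_i\dot f^i\kappa_i^2\ge\alpha\kappa_{\min}F$, while the normalisation and homogeneity of $f$ give $F\le(n\kappa_{\max})^\alpha$, hence $\kappa_{\max}\ge n^{-1}F^{1/\alpha}$, so that $\dot F^{kl}h_{km}h^m{}_l\ge\tilde C\,F^{1+1/\alpha}$ for a constant $\tilde C(n,\alpha,\mu)>0$. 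Using $F=Q\,(2\langle X,\nu\rangle-r_0)$ and $2\langle X,\nu\rangle-r_0\ge r_0$, the zero-order part is then bounded above by $2(1+\alpha)Q^2-r_0^{1+1/\alpha}\tilde C\,Q^{2+1/\alpha}$.

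Applying the maximum principle at a spatial maximum of $Q$ (where the drift term vanishes and $\LL Q\le0$) reduces everything to the scalar differential inequality $q'\le 2(1+\alpha)q^2-r_0^{1+1/\alpha}\tilde C\,q^{2+1/\alpha}$. Comparison with this ODE gives, independently of the (possibly large) value of $Q$ at $t=0$, the bound $Q\le\max\{c_1r_0^{-(1+\alpha)},\,c_2r_0^{-1}t^{-\alpha/(1+\alpha)}\}$ on $(0,t_0]$, with $c_1,c_2$ depending only on $n,\alpha,\mu$: the first alternative is the equilibrium value of the ODE, and the second is Tso's decay rate, obtained because $q^{-1/\alpha}$ grows at least linearly while $q$ is large. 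Finally I would convert this into a curvature bound: from $F=Q(2\langle X,\nu\rangle-r_0)\le 4r_+(M_0)\,Q$ together with the lower bound $F\ge(nc\,\kappa_{\max})^\alpha$ coming from pinching, one gets $\kappa_{\max}\le(nc)^{-1}\bigl(4r_+(M_0)Q\bigr)^{1/\alpha}$; substituting the two alternatives for $Q$, raising to the power $1/\alpha$ (which yields the exponent $-(1+1/\alpha)$ in $r_0$ from the first and $-1/(1+\alpha)$ in $t$ and $-1/\alpha$ in $r_0$ from the second), and absorbing $r_+(M_0)^{1/\alpha}$ and all powers of $r_0=r_-(M_{t_0})$ into one constant $C_+=C_+(n,\alpha,\mu,r_+(M_0),r_-(M_{t_0}))$ gives $\kappa_{\max}(x,t)\le C_+\bigl(1+t^{-1/(1+\alpha)}\bigr)$.

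I expect the main obstacle to be the $Q$-evolution computation together with verifying the sign of the reaction term --- that is, confirming that the genuinely bad quadratic term $2(1+\alpha)Q^2$ is dominated by the coercive term $-r_0^{1+1/\alpha}\tilde C\,Q^{2+1/\alpha}$. This is exactly where the preserved pinching of Theorem~\ref{thm:pres.pinch} is essential: without it one only has $\dot F^{kl}h_{km}h^m{}_l\ge\alpha\kappa_{\min}F$, with no way to bound $\kappa_{\min}$ below in terms of $F^{1/\alpha}$, and the argument collapses. The cancellation of the gradient terms, the ODE comparison, and the bookkeeping of the constants are routine.
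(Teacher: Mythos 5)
Your argument is exactly the one in the paper: the Tso-type test function $Q=F/(2\langle X,\nu\rangle-r_0)$ with the origin at the incentre of $M_{t_0}$, the coercivity $\dot F^{kl}h_{km}h^{m}{}_l\ge\tilde C F^{1+1/\alpha}$ from preserved pinching together with $F\le(n\kappa_{\max})^\alpha$, the maximum-principle/ODE comparison yielding $Q\le\max\{c_1r_0^{-(1+\alpha)},c_2r_0^{-1}t^{-\alpha/(1+\alpha)}\}$, and the conversion to a $\kappa_{\max}$ bound via the lower bound $F\ge(nc\,\kappa_{\max})^\alpha$. Your rendering of the reaction term, $\frac{F}{w^2}\bigl(2(1+\alpha)F-r_0\dot F^{kl}h_{km}h^m{}_l\bigr)$, has the correct sign and factor — the displayed formula in the paper contains a sign/factor typo — and you spell out the final conversion to $\kappa_{\max}$ that the paper leaves implicit, but there is no substantive difference in the method.
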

 
\section{Regularity results for parabolic equations}\label{sec:Cordes-Nirenberg}

Here we summarise the ingredients needed for our regularity results. Denote by $Q_r$ the spacetime cylinder $B_r(0)\times(-r^2,0]$.  We need the result on H\"older continuity of solutions of linear uniformly parabolic equations by Krylov and Safonov \cite{KS} (see also \cite{Lieb:book}*{Corollary 7.26}):  

\begin{theorem}\label{thm:KS}
For any $0<\lambda\leq\Lambda<\infty$ and $B\geq 0$, there exist $\gamma\in(0,1)$ and $K$ such that for any smooth solution $u:\ Q_1\to\RR$ of the equation 
\begin{equation}\label{eq:unif.par.lin}
\frac{\p u}{\p t} = a^{ij}\frac{\p^2 u}{\p x^i\p x^j} + b^i\frac{\p u}{\p x^i} + f,
\end{equation}
with $\lambda \|\xi\|^2\leq a^{ij}\xi_i\xi_j\leq \Lambda\|\xi\|^2$ for all $\xi$ and $|b^i|\leq B$, the following holds for $0<r<1$:
$$
\osc_{Q_r}u\leq K r^\gamma\left(\osc_{Q_1}u+\|f\|_\infty\right).
$$
\end{theorem}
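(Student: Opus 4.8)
This is the classical parabolic Krylov--Safonov H\"older estimate, and in fact we only use it as a black box in Section~\ref{sec:Cordes-Nirenberg}; but let me indicate the standard argument by which one would prove it. The conclusion is invariant under the parabolic scaling $u(x,t)\mapsto u(\rho x,\rho^2 t)$: if $u$ solves \eqref{eq:unif.par.lin} on $Q_\rho$ with the stated bounds, then $\tilde u(y,s)=u(\rho y,\rho^2 s)$ solves an equation of the same form on $Q_1$ with the same ellipticity constants, drift bounded by $\rho B\le B$, and right-hand side of sup norm $\rho^2\|f\|_\infty\le\|f\|_\infty$. Hence it suffices to prove a single oscillation-decay step: there exist $\theta\in(0,1)$, $\eta\in(0,1)$ and $C$, all depending only on $n,\lambda,\Lambda,B$, such that every smooth solution on $Q_1$ satisfies
$$
\osc_{Q_\theta}u\leq \eta\,\osc_{Q_1}u+C\|f\|_\infty .
$$
Applying this on each cylinder $Q_{\theta^k}$ (with $f$ rescaled as above) and summing the resulting geometric series gives the stated bound with $\gamma=-\log\eta/\log(1/\theta)$ (taking $\theta$ small enough that $\gamma\le 2$) and an explicit $K$; a routine covering argument then upgrades the conclusion from the radii $\theta^k$ to all $r\in(0,1)$.

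For the oscillation-decay step, after subtracting $\inf_{Q_1}u$ and rescaling we may assume $0\le u\le 1$ on $Q_1$. Fix a ``past'' sub-cylinder $Q^-=B_{1/2}(0)\times(-1,-\tfrac14]$. Then one of the sets $\{u\ge\tfrac12\}\cap Q^-$ or $\{u\le\tfrac12\}\cap Q^-$ has measure at least $\tfrac12|Q^-|$; in the first case put $v=u$, in the second $v=1-u$, noting that $1-u$ solves an equation of the same form (with $f$ replaced by $-f$). Then $v\ge 0$ on $Q_1$ and $|\{v\ge\tfrac12\}\cap Q^-|\ge\tfrac12|Q^-|$, and the parabolic weak Harnack inequality yields $\inf_{Q_\theta}v\ge\mu-C\|f\|_\infty$ for some $\mu>0$ depending only on the structural constants. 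Since $0\le u\le 1$ on $Q_1$ and $v\in\{u,1-u\}$ is bounded below by $\mu-C\|f\|_\infty$ on $Q_\theta$, we get $\osc_{Q_\theta}u\le 1-\mu+C\|f\|_\infty$, i.e.\ the displayed estimate with $\eta=1-\mu$.

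The weak Harnack inequality in turn rests on the parabolic Krylov--Safonov \emph{measure (growth) estimate}: if $w\ge 0$ solves the equation and $\inf w$ is controlled on a small cylinder, then $w$ is bounded by a fixed constant on a definite fraction (in measure) of a comparably sized cylinder located later in time, up to an error controlled by $\|f\|_\infty$. Combined with a Calder\'on--Zygmund ``stacked cubes'' iteration adapted to the parabolic geometry, this propagates the measure information to an $L^p$ bound and then, via a standard crossover lemma, to the pointwise lower bound above. The measure estimate itself is obtained by applying the parabolic Aleksandrov--Bakelman--Pucci (Krylov--Tso) maximum principle to $w$ minus an explicit barrier supported in a paraboloid-shaped region, which bounds $\inf w$ from below in terms of the measure of the contact set and $\|f\|_\infty$. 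Every constant produced in this chain depends only on $n,\lambda,\Lambda,B$ and, crucially, on \emph{no} modulus of continuity of the coefficients $a^{ij},b^i$ --- precisely the uniformity exploited when we freeze coefficients in Section~\ref{sec:Cordes-Nirenberg}.

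The main obstacle, and the technical heart of the proof, is the parabolic measure estimate. Unlike the elliptic case, information propagates only forward in time, so the cube-decomposition argument must be run with a deliberate time-lag between the cylinder where $w$ is known to be small and the cylinder where one gains measure, and the ABP barrier must be built to respect the parabolic (rather than Euclidean) scaling of the cylinders. Carrying out this asymmetric bookkeeping carefully --- as in Krylov--Safonov's original argument or the streamlined treatment of \cite{Lieb:book}*{Chapter~7} --- is the delicate part; once it is in place, the weak Harnack inequality, the oscillation decay, and the final scaling and covering steps are all routine.
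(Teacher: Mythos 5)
The paper does not supply a proof of this theorem: it is quoted as a known result, with references to Krylov--Safonov \cite{KS} and to \cite{Lieb:book}*{Corollary 7.26}, and you correctly observe that it is used only as a black box in Section~\ref{sec:Cordes-Nirenberg}. Your sketch of the argument by which one \emph{would} prove it is the standard Krylov--Safonov proof and is accurate in its essentials: parabolic scaling (which shrinks both the drift bound and $\|f\|_\infty$) reduces the theorem to a single oscillation-decay step on $Q_1$; that step follows by applying the weak Harnack inequality to whichever of $u$ and $1-u$ exceeds $\tfrac12$ on at least half of a past sub-cylinder; and the weak Harnack inequality rests on the Krylov--Tso parabolic ABP estimate, a barrier construction, and a forward-in-time Calder\'on--Zygmund ``stacked cylinders'' iteration, exactly as treated in \cite{Lieb:book}*{Chapter 7}. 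You also identify precisely the property that matters for the application in Section~\ref{sec:Cordes-Nirenberg}: every constant depends only on $n,\lambda,\Lambda,B$ and on no modulus of continuity of $a^{ij}$ and $b^i$. Two minor points of bookkeeping: in the iteration the forcing contribution at scale $\theta^k$ carries a factor $\theta^{2k}$, so the exponent obtained is $\min\{-\log\eta/\log(1/\theta),\,2\}$; your device of shrinking $\theta$ to force $\gamma\le 2$ is one valid way to handle this (decreasing $\theta$ decreases $\gamma$ while the one-step decay survives by monotonicity of the oscillation), though simply replacing $\gamma$ by $\min\{\gamma,2\}$ works as well. And the passage from the dyadic radii $\theta^k$ to arbitrary $r\in(0,1)$ is not a covering argument but simply the monotonicity of $r\mapsto\osc_{Q_r}u$, at the cost of an extra factor $\theta^{-\gamma}$ in $K$.
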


Schauder estimates give more regularity if the coefficients are regular (see \cite{Lieb:book}*{Theorem 4.9}):

\begin{theorem}\label{thm:Schauder}
Let $0<\lambda\leq\Lambda<\infty$, $A,B,C\geq 0$, and $\beta\in(0,1)$.  Then there exists $K$ depending only on $n$, $\lambda$, $\Lambda$, $A$, $B$, $C$ and $\beta$ such that for any
smooth solution $u:\ Q_1\to\RR$ of the equation
\begin{equation}\label{eq:unif.par.lin.2}
\frac{\p u}{\p t} = a^{ij}\frac{\p^2 u}{\p x^i\p x^j} + b^i\frac{\p u}{\p x^i} + cu + f
\end{equation}
where $\lambda\|\xi\|^2\leq a^{ij}\xi_i\xi_j\leq \Lambda\|\xi\|^2$ for all $\xi$, $|a^{ij}(x,t)-a^{ij}(y,s)|\leq A\max\{|x-y|^\beta,|t-s|^{\beta/2}\}$, $|b^i(x,t)|\leq B$, $|b^i(x,t)-b^i(y,s)|\leq B\max\{|x-y|^\beta,|t-s|^{\beta/2}\}$, $|c(x,t)|\leq C$ and $|c(x,t)-c(y,s)|\leq C\max\{|x-y|^\beta,|t-s|^{\beta/2}\}$ for all $(x,t)$ and $(y,s)$ in $B_1(0)\times[-1,0]$, the following estimate holds:
\begin{align*}
&\|Du\|_{L^\infty(Q_{1/2})}
+\|D^2u\|_{L^\infty(Q_{1/2})}
+\left\|\frac{\p u}{\p t}\right\|_{L^\infty(Q_{1/2})}
+\sup_{(x,t)\neq (y,s)\text{\ in\ }Q_{1/2}}\frac{\left|\frac{\p u}{\p t}(x,s)-\frac{\p u}{\p t}(y,s)\right|}{\max\{|x-y|^{\beta},|t-s|^{\beta/2}\}}\\
&\null+\sup_{(x,t)\neq (y,s)\text{\ in\ }Q_{1/2}}\frac{\left|D^2u(x,t)-D^2u(y,s)\right|}{\max\{|x-y|^{\beta},|t-s|^{\beta/2}\}}
+\sup_{(x,t),(x,s)\in Q_{1/2},\ t\neq s}\frac{\left|Du(x,t)-Du(x,s)\right|}{|t-s|^{(1+\beta)/2}}\\
&\qquad\leq K\left(\|u\|_{L^\infty(Q_1)}+\|f\|_{L^\infty(Q_1)}+\sup_{(x,t)\neq(y,s)\text{\ in\ }Q_1}
\frac{\left|f(x,t)-f(y,s)\right|}{\max\{|x-y|^{\beta},|t-s|^{\beta/2}\}}\right).
\end{align*}
\end{theorem}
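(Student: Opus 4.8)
The statement is the classical interior parabolic Schauder estimate; the plan is to prove it by ``freezing the coefficients'' together with a Campanato-type characterisation of parabolic H\"older spaces. Since all quantities to be bounded are interior, it suffices to estimate the listed seminorms on a small parabolic cylinder around an arbitrary interior point $z_0=(x_0,t_0)$ and then to cover $Q_{1/2}$ by finitely many such cylinders. First I would dispose of the lower-order terms by a short bootstrap: Theorem~\ref{thm:KS} together with a perturbation argument (the principal coefficients are nearly constant on small scales) gives an a priori $C^{1,\gamma}$ bound for $u$ on $Q_{3/4}$ for some $\gamma\in(0,1)$ depending only on the ellipticity data; feeding the resulting H\"older bound on $b^iDu+cu$ into the principal-part estimate below and iterating a couple of times upgrades this to a $C^{1,\beta}$ bound --- after one round $u\in C^{2,\gamma}$, so $Du$ is Lipschitz and in particular lies in $C^{0,\beta}$ --- after which $b^iDu+cu+f$ has controlled $C^\beta$ norm. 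It therefore suffices to prove the $C^{2,\beta}$ estimate for $\p_tu=a^{ij}\p^2_{ij}u+\tilde f$ with $\tilde f$ of controlled $C^\beta$ norm, and I concentrate on this.

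An affine change of the spatial variable turning the symmetric part of $a^{ij}(z_0)$ into the identity --- with distortion controlled by $\lambda$ and $\Lambda$ alone --- turns the frozen-coefficient operator into the heat operator and carries $Q_\rho(z_0)$ between two concentric parabolic cylinders of comparable radii. The core is then the constant-coefficient $C^{2,\beta}$ estimate for $\p_tv=\Delta v+g$ with $g\in C^\beta$, which I would obtain from potential theory: represent $v$ via the Gaussian kernel and estimate the second spatial derivatives and the time derivative of the heat potential of a $C^\beta$ source, the parabolic analogue of the classical H\"older estimate for the Newtonian potential of a H\"older density. Equivalently this is an excess-decay statement: for a \emph{caloric} function $w$ on $Q_r(z_0)$, interior derivative bounds for the heat equation make $\sup_{Q_{r/2}(z_0)}(|D^2w|+|\p_tw|)$ at most $Cr^{-2}$ times the $L^2$-average of $w$ over $Q_r(z_0)$, so that subtracting the second-order parabolic Taylor polynomial of $w$ at $z_0$ shows that the mean-square deviation $\Phi_w(\theta r)$ of $D^2w$ from its average over $Q_{\theta r}(z_0)$ obeys $\Phi_w(\theta r)\le C_0\theta^2\Phi_w(r)$. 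Since $\beta<1$ one has $2>2\beta$, which is exactly the gain needed to drive the perturbation.

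For the variable-coefficient equation, on $Q_\rho(z_0)$ write $a^{ij}\p^2_{ij}u=a^{ij}(z_0)\p^2_{ij}u+\bigl(a^{ij}-a^{ij}(z_0)\bigr)\p^2_{ij}u$, let $v$ solve $\p_tv=a^{ij}(z_0)\p^2_{ij}v$ on $Q_\rho(z_0)$ with $v=u$ on the parabolic boundary, and set $w=u-v$, which solves the frozen equation with right-hand side $\tilde f+\bigl(a^{ij}-a^{ij}(z_0)\bigr)\p^2_{ij}u$ and vanishing parabolic boundary data. The standard parabolic $W^{2,2}$ estimate bounds $\|D^2w\|_{L^2(Q_\rho(z_0))}$ by $\|\tilde f\|_{L^2(Q_\rho(z_0))}+A\rho^\beta\|D^2u\|_{L^2(Q_\rho(z_0))}$, and combining this with the caloric decay for $v$ gives an excess-decay inequality of the schematic form
\[
\Phi(\theta\rho)\ \le\ C_0\,\theta^2\,\Phi(\rho)\ +\ C_1\,\theta^{-(n+2)}\Bigl(A^2\rho^{2\beta}\,\Phi(\rho)+\rho^{2\beta}\,\|\tilde f\|_{C^\beta}^2\Bigr),
\]
where $\Phi(\rho)$ is the mean-square deviation of $D^2u$ from its average over $Q_\rho(z_0)$ (one also invokes the global $W^{2,2}$ bound on $\|D^2u\|_{L^2(Q_{3/4})}$ to seed the iteration and to control the averages $(D^2u)_{z_0,\rho}$). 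Choosing first $\theta$ small so that $2C_0\theta^2<\theta^{2\beta}$, and then $\rho_0$ small so that $C_1\theta^{-(n+2)}A^2\rho_0^{2\beta}$ is negligible, a standard iteration over the dyadic scales $\theta^k\rho_0$ yields $\Phi(\sigma)\le C\sigma^{2\beta}$ for $\sigma\le\rho_0$, which is precisely the Campanato condition forcing $D^2u\in C^\beta$ near $z_0$ with the stated quantitative bound; the bound on $\p_tu$ and its H\"older seminorm then follow from the equation, and the $|t-s|^{(1+\beta)/2}$ modulus of $Du$ is part of the Campanato description of the parabolic $C^{2,\beta}$ class. Undoing the reductions of the first paragraph and covering $Q_{1/2}$ by a controlled number of cylinders $Q_{\rho_0}(z_i)$ then gives the theorem.

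The main obstacle is the constant-coefficient building block of the second paragraph: setting up the correct \emph{anisotropic} Campanato characterisation of the parabolic space $C^{2,\beta}$ --- in which $D^2u$ and $\p_tu$ are controlled in $C^\beta$ simultaneously and $Du$ carries the mixed $|t-s|^{(1+\beta)/2}$ modulus --- and proving the matching potential estimate for the heat potential of a $C^\beta$ source, with every constant depending only on $n$, $\lambda$, $\Lambda$, $A$, $B$, $C$ and $\beta$. Once that is in place the freezing, perturbation and iteration are routine; the only point demanding care is to ensure that the smallness threshold $\rho_0$ depends only on the listed data, so that $Q_{1/2}$ is covered by a controlled number of cylinders and $K$ has the asserted dependence.
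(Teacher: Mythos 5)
The paper does not prove Theorem~\ref{thm:Schauder}; it is stated as a black-box citation to Lieberman's book \cite{Lieb:book}*{Theorem 4.9}, so there is no in-paper argument to compare your sketch against. What you have written is a recognisable and broadly sound outline of one standard route to interior parabolic Schauder estimates --- freeze coefficients, compare with a caloric function, run a Campanato excess-decay iteration over dyadic scales, and read off $C^{2,\beta}$ from the Campanato characterisation.

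A few points deserve correction or more care. First, the preliminary bootstrap to $C^{1,\gamma}$ is not really an application of Theorem~\ref{thm:KS}: Krylov--Safonov controls the oscillation of $u$ itself, not $Du$, and the ``perturbation argument'' you gesture at is precisely the Cordes--Nirenberg-type estimate, Theorem~\ref{thm:cordes}, applied on small cylinders where H\"older continuity makes the oscillation of $a^{ij}$ small. It would be cleaner to cite that theorem, or better still to drop the bootstrap entirely and absorb $b^iDu+cu$ by the standard interpolation inequality $\|Du\|_{C^\beta}\le\e\|D^2u\|_{C^\beta}+C(\e)\|u\|_{L^\infty}$, which is how most treatments dispose of lower-order terms and avoids any circularity. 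Second, your excess-decay inequality controls only the oscillation $\Phi(\rho)$ of $D^2u$; the $L^\infty$ bound on $D^2u$ and the constant in the H\"older seminorm emerge from summing the drift of the averages $(D^2u)_{z_0,\theta^k\rho_0}$ across scales, which you relegate to a parenthesis but which is genuinely part of the argument. Third, as you rightly flag, the whole scheme rests on the constant-coefficient $C^{2,\beta}$ estimate and the anisotropic parabolic Campanato characterisation (including the mixed $|t-s|^{(1+\beta)/2}$ modulus of $Du$); without these supplied, the sketch is a skeleton rather than a proof. None of this matters for the paper, which simply cites the result, but if you intend a self-contained proof those are the pieces still owed.
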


In order to apply Schauder estimates to deduce higher regularity for solutions of fully nonlinear parabolic equations, one must first prove that derivatives up to second order in space and first order in time are H\"older continuous.  In most such cases it is necessary to use the results of Krylov \cite{Krylov} (or Evans \cite{Evans} for the elliptic case) which require concave dependence on the second derivatives.    However in the situation of this paper the geometric restrictions allow us to dispense with this requirement:  Cordes \cite{Cordes} and Nirenberg \cite{Nirenberg} proved that linear elliptic equations with coefficients close to the identity admit $C^{1,\alpha}$ estimates.  We use a result of this kind from \cite{Lieb:book}*{Lemma 12.13}, proved by adapting to the parabolic setting a method of Caffarelli \cite{Caffarelli}.

\begin{theorem}\label{thm:cordes}
Let $0<\lambda\leq\Lambda<\infty$, $A>0$ and $B>0$.  Then there exist constants $C>0$, $\gamma\in(0,1]$ and $\sigma>0$ such that for any $u:\ Q_1\to\RR$ with continuous first time derivatives and second spatial derivatives, satisfying the equation
\begin{equation}\label{eq:par.eqn}
\frac{\partial u}{\partial t} = a^{ij}(x,t)D_iD_ju + a(x,t)
\end{equation}
with $\lambda\delta^{ij}\leq a^{ij}(x,t)\leq\Lambda\delta^{ij}$, $|Du|\leq B$, $|a(x,t)|\leq A$, and $|a^{ij}(x,t)-a^{ij}(y,s)|\leq\sigma$ for all $(x,t)$ and $(y,s)$ in $Q_1$, we have for all $0<r<1$
$$
\osc_{Q_r}Du\leq Cr^\gamma\left(
\osc_{Q_1}Du+K\right).
$$
\end{theorem}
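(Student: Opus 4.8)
The plan is to run Caffarelli's perturbation (compactness) argument, as adapted to the parabolic setting in \cite{Lieb:book}*{Lemma 12.13} and \cite{Caffarelli}: one establishes an ``improvement of flatness'' --- approximation of $u$ by affine functions --- on a single small parabolic scale $\theta$, with the coefficient oscillation bound $\sigma$ serving as the perturbation parameter, and then iterates this over the scales $\theta^k$, summing a geometric series to obtain pointwise $C^{1,\gamma}$ control of $u$ and hence the asserted decay of $\osc_{Q_r}Du$. First I would normalise: since \eqref{eq:par.eqn} is linear in $u$, subtracting the affine function $u(0,0)+Du(0,0)\cdot x$ (which solves the same equation) and dividing by a constant $N$ --- which one checks can be taken of the form $C(\osc_{Q_1}Du+K)$, with $K$ a fixed multiple of $A$, using the gradient bound $|Du|\le B$ together with a maximum-principle comparison with linear-in-$t$ barriers to bound the time oscillation of $u$ --- reduces matters to the case $\sup_{Q_1}|u|\le1$ and $\|a\|_{L^\infty(Q_1)}\le a_0$, where $a_0>0$ is a small constant to be chosen in terms of $n,\lambda,\Lambda$; the bounds $\lambda\delta^{ij}\le a^{ij}\le\Lambda\delta^{ij}$ and $\osc_{Q_1}a^{ij}\le\sigma$ are unchanged, and since \eqref{eq:par.eqn} has no first-order term, Theorem \ref{thm:KS} applies directly (with $b\equiv0$, $f=a$).

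The heart of the argument is a one-step lemma: there exist $\theta\in(0,\tfrac12)$, $\gamma\in(0,1)$, $\sigma_0>0$ and $a_0>0$, depending only on $n,\lambda,\Lambda$, such that whenever $u$ solves \eqref{eq:par.eqn} in $Q_1$ with $\sup_{Q_1}|u|\le1$, $\|a\|_\infty\le a_0$ and $\osc_{Q_1}a^{ij}\le\sigma_0$, there is an affine function $\ell$ with $|D\ell|+|\ell(0)|\le C_*$ and $\sup_{Q_\theta}|u-\ell|\le\theta^{1+\gamma}$. I would prove this by compactness: were it false, there would be sequences $a^{ij}_k$ with $\osc_{Q_1}a^{ij}_k\to0$ (so, along a subsequence, $a^{ij}_k\to\bar a^{ij}$ a constant elliptic matrix), $\|a_k\|_\infty\to0$, and solutions $u_k$ with $\sup_{Q_1}|u_k|\le1$ admitting no such $\ell$; Theorem \ref{thm:KS} gives interior equicontinuity of the $u_k$, so a subsequence converges locally uniformly to some $u_\infty$, and a standard stability argument (in the viscosity sense, say) shows that $u_\infty$ solves the constant-coefficient equation $\p_tu_\infty=\bar a^{ij}D_iD_ju_\infty$, hence is smooth in the interior with the usual interior derivative estimates. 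Taking $\ell_\infty$ to be the first-order spatial Taylor polynomial of $u_\infty$ at the origin, the parabolic Taylor expansion gives $\sup_{Q_\theta}|u_\infty-\ell_\infty|\le C_1\bigl(\sup_{Q_{1/4}}|D^2u_\infty|+\sup_{Q_{1/4}}|\p_tu_\infty|\bigr)\theta^2\le C_2\theta^2$; choosing $\theta$ so small that $C_2\theta^2\le\tfrac12\theta^{1+\gamma}$ (possible for any $\gamma<1$), and then $\sigma_0$ and $a_0$ small enough, forces a contradiction for large $k$.

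To conclude I would iterate. Put $\tilde u(y,s)=\theta^{-(1+\gamma)}(u-\ell)(\theta y,\theta^2s)$ on $Q_1$: then $\sup_{Q_1}|\tilde u|\le1$; the rescaled coefficients $a^{ij}(\theta\cdot,\theta^2\cdot)$ have oscillation over $Q_1$ no larger than that of $a^{ij}$ over $Q_\theta$; and the rescaled inhomogeneity $\theta^{1-\gamma}a(\theta\cdot,\theta^2\cdot)$ has sup-norm at most $\theta^{1-\gamma}a_0\le a_0$ --- so the hypotheses of the one-step lemma are reproduced. Applying it repeatedly yields affine functions $\ell_k$ with $\sup_{Q_{\theta^k}}|u-\ell_k|\le\theta^{k(1+\gamma)}$ and $|D\ell_k-D\ell_{k-1}|\le C_*\theta^{(k-1)\gamma}$, so $D\ell_k$ converges to some $p_\infty$; summing the series produces an affine $\ell_\infty$ with $\sup_{Q_\rho}|u-\ell_\infty|\le C\rho^{1+\gamma}$ for every $\rho\in(0,1)$, which is precisely pointwise $C^{1,\gamma}$ regularity at the origin with $Du(0,0)=p_\infty$. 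Running the same argument centred at each point of a slightly smaller cylinder, passing in the standard way to $|Du(z)-Du(z')|\le C|z-z'|_{\mathrm{par}}^\gamma$, and using monotonicity of $\osc_{Q_r}$ in $r$, gives $\osc_{Q_r}Du\le Cr^\gamma$ for the normalised $u$; undoing the normalisation gives the stated estimate, with $\sigma=\sigma_0$, with $\gamma,C$ depending only on $n,\lambda,\Lambda$, and with $K$ a fixed multiple of $A$.

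I expect the main obstacle to be the compactness step, in particular the passage to the limit: since the coefficients $a^{ij}$ are assumed merely continuous with small oscillation and nothing more, one must argue that the locally uniform limit $u_\infty$ of the $u_k$ genuinely solves the constant-coefficient limiting equation --- which requires stability of solutions under uniform convergence (in the viscosity or distributional sense) together with the a priori H\"older estimate of Theorem \ref{thm:KS} to guarantee precompactness. A secondary point requiring care is the normalisation: controlling $\sup_{Q_1}|u|$, and especially the oscillation of $u$ in time, purely in terms of $\osc_{Q_1}Du$, $B$ and $A$, so that the final estimate takes exactly the asserted form with $K$ proportional to $A$.
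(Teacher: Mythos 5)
The paper does not prove Theorem \ref{thm:cordes} at all: it is quoted from Lieberman's book \cite{Lieb:book}*{Lemma 12.13}, and the paper merely remarks that Lieberman's proof adapts Caffarelli's method \cite{Caffarelli} to the parabolic setting. Your proposal reconstructs precisely that line of argument --- normalise, prove a one-scale improvement-of-flatness lemma by compactness against the Krylov--Safonov estimate (Theorem \ref{thm:KS}) together with interior regularity for the constant-coefficient limit equation, then iterate across dyadic parabolic scales to obtain pointwise $C^{1,\gamma}$ control and hence the oscillation decay of $Du$ --- so in substance you have rederived the cited result by the very method the authors point to, rather than giving an independent route.

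Two small points worth flagging, neither of which affects the soundness of the approach. First, the normalisation step deserves a little more care: after subtracting the tangent affine function, the spatial oscillation of the new $u$ on each time slice is indeed controlled by $\osc_{Q_1}Du$ via integrating $Du-Du(0,0)$, but the \emph{time} oscillation cannot be read off from the gradient bound alone; your suggestion to use linear-in-$t$ barriers together with the paraboloid $\varepsilon|x|^2 + K^2/(4\varepsilon)$ and then optimise over $\varepsilon$ (as is done in the proof of Lemma \ref{lem:normal-Holder} in this paper) does close this, and is what produces the additive constant $K$ as a fixed multiple of $A$ in the final statement. Second, in the compactness step the limit $u_\infty$ should be identified as a solution of the constant-coefficient equation in the viscosity (or distributional) sense, since no uniform $D^2u_k$ bound is available; this is standard, but it is the step where one must genuinely use the \emph{uniform} ellipticity bounds $\lambda\le a^{ij}_k\le\Lambda$ and the uniform modulus furnished by Theorem \ref{thm:KS}, exactly as you anticipate. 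With those two points made explicit the argument is complete and is, to the best of my reading, the same as the one Lieberman gives.
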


\section{H\"older continuity of second derivatives}\label{sec:Holderbounds}

In this section we use the regularity results provided in the previous section to prove H\"older continuity of the second fundamental form.  The proof proceeds in several stages:  First, we observe that upper and lower bounds on $F$ together with the pinching result of Theorem \ref{thm:pres.pinch} imply bounds above and below on second derivatives, which in turn imply that the evolving hypersurfaces can be described locally as the graph of a function evolving by a uniformly parabolic equation.  From this we deduce that the unit normal $\nu$ and the speed $F$ are H\"older continuous in space-time.  It follows that on small spacetime regions the evolving graphs have first spatial derivatives satisfying an equation to which Theorem \ref{thm:cordes} can be applied, yielding the following:

\begin{theorem}\label{thm:h-Holder}
Let $F$ satisfy Conditions \ref{T:Fconds}.  Then there exist $\delta_2\in(0,\delta_1]$ and $\gamma\in(0,1]$ depending only on $n$, $\alpha$ and $\mu$ such that for any smooth uniformly convex solution $X:\ M\times [0,\tau]\to\RR^{n+1}$ of \eqref{eq:flow} with $\|\Acir\|^2<\min\{\delta_2,\delta_0\} H^2$ everywhere, 
$$
\left|h_{x,t}(v,v)-h_{y,s}(w,w)\right|\leq C\left(|X(x,t)-X(y,s)|^\gamma+|t-s|^{\gamma/2}+\left|X_*(v)-X_*(w)\right|^{\gamma}\right)
$$
for all $(x,t),(y,s)\in M\times [\tau/2,\tau]$ with $|X(x,t)-X(y,s)|+\sqrt{|t-s|}<d$, $v\in T_xM$, and $w\in T_yM$, where $C$ and $d$ depend on $\tau$, $r_+=r_+(M_0)$, $r_-=r_-(M_\tau)$ and $\inf_{M_0}F$.
\end{theorem}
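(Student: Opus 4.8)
The plan is to carry out four stages, following the roadmap indicated just before the statement.

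\textbf{Stage one: curvature bounds; stage two: graph representation.} On $M\times[\tau/2,\tau]$ I would first record $0<c_0\le\kappa_{\min}\le\kappa_{\max}\le C_0$. The upper bound is Proposition~\ref{prop:curv.bound}. For the lower bound, Lemma~\ref{T:evlnG}\ref{evolF} shows $\min_{M_t}F$ is non-decreasing, so $F\ge\inf_{M_0}F>0$; since homogeneity gives $F\le(n\kappa_{\max})^\alpha$ this bounds $\kappa_{\max}$ below, and then the pinching of Theorem~\ref{thm:pres.pinch} with Lemma~\ref{T:pinch} bounds $\kappa_{\min}$ below, the constants depending only on $n,\alpha,\mu,\tau,r_+(M_0),r_-(M_\tau),\inf_{M_0}F$. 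A parabolic rescaling $X\mapsto\lambda X$, $t\mapsto\lambda^{1+\alpha}t$ then lets me assume $\kappa_{\max}\le1$, the factor $\lambda$ being absorbed into the final constants $C,d$. With bounded curvature and convexity, each $M_t$ is, near any point, a graph $x_{n+1}=u(x,t)$ over a fixed tangent hyperplane on a spacetime cylinder of uniform size, with $\|u\|_{C^0}$, $\|Du\|_{C^0}$ small and $\|D^2u\|_{C^0}$ bounded, and $u$ solving the fully nonlinear equation $\partial_tu=-F(\mathcal W[u])\sqrt{1+|Du|^2}=:\mathcal F(Du,D^2u)$, which by \eqref{eq:DFbound} and the curvature bounds is uniformly parabolic.

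\textbf{Stage three: H\"older continuity of $\nu$ and $F$.} For $\nu$, equivalently for $Du$, this is elementary: bounded $D^2u$ makes $Du$ Lipschitz in $x$, and applying a Landau--Kolmogorov interpolation inequality to $\phi(x)=u(x,t)-u(x,s)$ — whose sup norm is at most $\|\partial_tu\|_{C^0}\,|t-s|$ because $\partial_tu$ is bounded, and whose second spatial derivatives are bounded — shows $Du$ is $\tfrac12$-H\"older in $t$. For $F$ I would rewrite the linear evolution equation of Lemma~\ref{T:evlnG}\ref{evolF} in graph coordinates: $\LL=\dot F^{ij}\nabla_i\nabla_j$ becomes a uniformly elliptic second-order operator (by \eqref{eq:DFbound} and the curvature bounds) with bounded lower-order terms, and the reaction term $\dot F^{kl}h_{km}h^{m}{}_{l}F$ is bounded, so Theorem~\ref{thm:KS} gives oscillation decay, hence H\"older continuity, for $F$ in spacetime.

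\textbf{Stage four: linearization and Cordes--Nirenberg.} Differentiating $\partial_tu=\mathcal F(Du,D^2u)$ in $x^k$ shows that $w=D_ku$ solves the linear equation $\partial_tw=\mathcal F_{r_{lm}}(Du,D^2u)\,D_lD_mw+\mathcal F_{p_m}(Du,D^2u)\,D_mw$. Since $D^2u$ is bounded the first-order term is bounded, so $w$ satisfies an equation of the form \eqref{eq:par.eqn} with coefficient matrix $a^{lm}(x,t)=\mathcal F_{r_{lm}}(Du,D^2u)$ and bounded inhomogeneity (with the inhomogeneity and gradient bounds $A,B$ of Theorem~\ref{thm:cordes} universal after the rescaling of stage one). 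The decisive point is that, after dividing by a suitable positive scalar, $a^{lm}$ is close to $\delta^{lm}$ with small oscillation on small cylinders: writing $\dot F=\tfrac{\alpha F}{H}g+\bigl(\dot F-\tfrac{\alpha F}{H}g\bigr)$, the remainder has norm at most $C(n,\alpha,\mu)\sqrt\sigma$ by \eqref{eq:D2Fbound}, \eqref{eq:DFbound}, \eqref{eq:Fbound} and the pinching $\|\Acir\|^2<\sigma H^2$, while the leading factor depends only on the H\"older quantities $F$, $H=F^{1/\alpha}(1+O(\sigma))$, and $Du$ (entering through $g$ and $\sqrt{1+|Du|^2}$) controlled in stage three. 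Choosing $\delta_2$ small in terms of $n,\alpha,\mu$ alone, and then the cylinder small in terms of the H\"older data, makes $\osc a^{lm}$ below the threshold $\sigma$ of Theorem~\ref{thm:cordes}; that theorem then yields $\osc_{Q_r}Dw\le Cr^\gamma(\osc_{Q_1}Dw+K)$, i.e. H\"older continuity of $D^2u$. Since $h$ is, in graph coordinates, linear in $D^2u$ with coefficients smooth functions of $Du$, this gives H\"older continuity of the components of $h$, and converting back to $M$ using the uniform graph control (which relates intrinsic points and tangent vectors to their ambient images $X$ and $X_*$) yields the estimate in the stated form.

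\textbf{The main obstacle} is stage four: showing the linearized coefficient matrix is close to the identity with controllably small oscillation. This is exactly where the absence of any concavity assumption on $f$ is paid for — the possibly non-concave part of $\dot F$ must be shown to be genuinely small in norm, which is what forces the strong pinching and the use of the bound $\mu$ on $\ddot F$ — and one must track carefully that the smallness threshold for $\sigma$, hence the choice of $\delta_2$, depends only on $n$, $\alpha$ and $\mu$, with every solution-dependent quantity pushed into the final constant $C$ and scale $d$; the rescaling of stage one is what makes this separation possible.
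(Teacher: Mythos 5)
Your proposal follows the paper's proof essentially stage for stage: curvature bounds from the preserved lower bound on $F$ and Proposition~\ref{prop:curv.bound}, local graph representation, H\"older continuity of $Du$ and $F$, then the Cordes--Nirenberg estimate of Theorem~\ref{thm:cordes} with the oscillation of the linearized coefficients controlled by pinching. The variations are in implementation only --- an interpolation inequality rather than the paper's barrier argument for the time-H\"older bound on $Du$, Krylov--Safonov applied to the evolution of $F$ directly rather than to $v=\partial_t u$, and the oscillation of $a^{lm}$ exhibited via the decomposition $\dot F\approx\frac{\alpha F}{H}g$ plus an $O(\sqrt{\sigma})$ remainder rather than via the paper's pointwise rescaling $F(x_1,t_1)=1$ --- and the last of these should be realised as a base-point normalization (a literal division by $\phi(x,t)$ changes the form of the parabolic equation), which also is what ensures, as in the paper, that the Cordes--Nirenberg threshold $\sigma$ and hence $\delta_2$ and $\gamma$ depend only on $n$, $\alpha$ and $\mu$.
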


\begin{proof}
We first deduce a curvature bound:

\begin{lemma}\label{lem:curv-bounds}
For $X$ as in Theorem \ref{thm:h-Holder} there exist  $0<k_-<k_+<\infty$ depending on $\tau$, $n$, $\mu$, $\alpha$, $r_\pm$ and $\inf_{M_0}F$ such that $k_-\leq\kappa_i(x,t)\leq k_+$ for all $x\in M$, $t\in[\tau/8,\tau]$ and $i\in\{1,\dots,n\}$.
\end{lemma}

\begin{proof}
The uniform convexity of $M_0$ implies that the speed $F$ has a positive lower bound at $t=0$, and the evolution equation for $F$ in Lemma \ref{T:evlnG}\ref{evolF} implies that this lower bound is maintained as long as the solution exists.  By Proposition \ref{prop:curv.bound} we deduce an upper bound on $F$ on $M\times[\tau/8,\tau]$, depending only on $r_-(M_\tau)$ and $r_+(M_0)$.  
Theorem \ref{thm:pres.pinch} and Lemma \ref{T:pinch} imply upper and lower bounds on the principal curvatures.
\end{proof}

In order to apply the results of the previous section we write the evolving hypersurface locally as an evolving graph:  Choose $x_1\in M$ and $t_1\in[\tau/8,\tau]$.  Since $\kappa_i\leq k_+$, $M_{t_1}$ encloses a sphere of radius $1/k_+$ which touches at $X(x_1,t_1)$.  Choose the origin to be at the centre of this sphere.  Then $M_t$ lies outside the sphere of radius $\left(k_+^{-(1+\alpha)}-(1+\alpha)(t-t_1)\right)^{\frac{1}{1+\alpha}}$ for $t_1\leq t<t_1+\frac{1}{(1+\alpha)k_+^{1+\alpha}}$.  In particular $M_t$ lies outside the sphere of radius $2^{-\frac{1}{1+\alpha}}k_+^{-1}$ for $t\in\left[t_1,t_1+\frac{1}{2(1+\alpha)}k_+^{-(1+\alpha)}\right]$.  Choose an orthonormal basis for $\RR^{n+1}$ with $e_{n+1}=k_+X(x_1,t_1)$.  By convexity, we have
$$
M_t\cap\{X\cdot e_{n+1}>0\}\cap\left\{|X-\left(e_{n+1}\cdot X\right)e_{n+1}|<2^{-\frac{1}{1+\alpha}}k_+^{-1}\right\}
=\left\{(z,-u(z,t)):\ |z|<2^{-\frac{1}{1+\alpha}}k_+^{-1}\right\}
$$ 
where $u\in C^\infty\left(B_{2^{-\frac{1}{1+\alpha}}k_+^{-1}}(0)\times\left[t_1,t_1+\frac{1}{2(1+\alpha)}k_+^{-(1+\alpha)}\right]\right)$ is convex for each $t$, $0\geq u(z,t)\geq -k_+^{-1}$ and $|Du(z,t)|\leq 2^{1+\frac{1}{1+\alpha}}$ for $|z|\leq 2^{-\frac{1}{1+\alpha}}k_+^{-1}$.  We bound the second spatial derivatives $D^2u$:  The second fundamental form is $h_{ij} = \frac{D_iD_ju}{\sqrt{1+|Du|^2}}$, and the induced metric is $g_{ij}=\delta_{ij}+D_iu\,D_ju$.  The upper and lower bounds on principal curvatures give
$k_-g_{ij}\leq h_{ij}\leq k_+g_{ij}$, which becomes
$$
k_- \left(\delta_{ij}+D_iu\,D_ju\right)\sqrt{1+|Du|^2}\leq D_iD_ju\leq k_+ \left(\delta_{ij}+D_iu\,D_ju\right)\sqrt{1+|Du|^2},
$$
so that by the gradient bounds we have 
\begin{equation}\label{eq:D2ubound}
k_-\delta_{ij}\leq D_iD_ju\leq k_+\left(1+2^{2+\frac{2}{2+\alpha}}\right)^{\frac32}.
\end{equation}

The function $u$ evolves according to the scalar parabolic evolution equation
\begin{equation}\label{eq:graph.flow}
\frac{\partial u}{\partial t} = \left(1+|Du|^2\right)^{\frac{1-\alpha}{2}}F\left(P\circ D^2u\circ P\right)=:\hat F(D^2u,Du),
\end{equation}
where $P_{ij}=\delta_{ij}-\frac{D_iu D_ju}{\sqrt{1+|Du|^2}\left(1+\sqrt{1+|Du|^2}\right)}$ is the square root of the inverse of the induced metric.  

Our next step is to prove that the normal direction is H\"older-continuous (in space-time):

\begin{lemma}\label{lem:normal-Holder}
For $X$ as in Theorem \ref{thm:h-Holder}, there exists $C$ and $d$ depending only on $r_+(M_0)$, $r_-(M_\tau)$ and $\inf_{M_0}F$ such that
$$
\left|\nu(x,t)-\nu(y,s)\right|\leq C\left(|X(x,t)-X(y,s)|+\sqrt{|t-s|}\right)
$$
for all $(x,t)$ and $(y,s)$ in $M\times[\tau/8,\tau]$ with $|X(x,t)-X(y,s)|+\sqrt{|t-s|}<d$.
\end{lemma}

\begin{proof}
 Differentiating Equation \eqref{eq:graph.flow} in a spatial direction gives the following evolution equation for $v=\frac{\partial u}{\partial z^i}$ for any $i$:
\begin{equation}\label{eq:graph.flow.ut}
\frac{\partial v}{\partial t}
= a^{kl}D_kD_lv+b^kD_kv
\end{equation}
where $a^{kl}=\frac{\partial\hat F(r,p)}{\partial r_{kl}}\Big|_{(r,p)=(D^2u,Du)}$ and 
$b^k=\frac{\partial\hat F(r,p)}{\partial p_{k}}\Big|_{(r,p)=(D^2u,Du)}$.   Explicitly we have
\begin{equation}\label{eq:a-form}
a^{kl} =\sqrt{1+|Du|^2}\dot F^{pq}\Big|_{P^T(D^2u)P} P^{pk}P^{ql},
\end{equation}
and $b^k$ is an expression involving $Du$ and $D^2u$, which may be bounded as follows: 
\begin{equation}\label{eq:b-bound}
|b^k|\leq C(|Du|,|D^2u|)|Du|.
\end{equation}
In particular $|b^k|\leq B$, where $B$ depends only on $r_+(M_0)$, $r_-(M_\tau)$ and $\inf_{M_0}F$.  We note that by equations \eqref{eq:a-form} and \eqref{eq:DFbound} and \eqref{eq:Fbound},
there exists $\Lambda>0$ depending on $\mu$, $n$, $\alpha$ and $r_\pm$ such that $a^{ij}\xi_i\xi_j\leq \Lambda\|\xi\|^2$ for all $\xi$.  

Equation \ref{eq:D2ubound} provides Lipschitz dependence of $v$ in space.  We use a barrier argument to deduce the required continuity in time:  Since $v(0,t_1)=0$, and $|v(x,t)|\leq 2^{1+\frac{1}{1+\alpha}}$, and $|Dv(x,t)|\leq K=k_+\left(1+2^{2+\frac{2}{1+\alpha}}\right)^{\frac32}$ everywhere on $B_{2^{-\frac{1}{1+\alpha}}k_+^{-1}}(0)\times\left[t_1,t_1+\frac{1}{2(1+\alpha)}k_+^{-(1+\alpha)}\right]$, we have in particular that $|v(x,t)|\leq K|x|\leq \frac{K^2}{4\varepsilon}+\varepsilon|x|^2$ for any $\varepsilon>0$, for $|x|\leq 2^{-\frac{1}{1+\alpha}}k_+^{-1}$ when $t=t_1$ and for $|x|=2^{-\frac{1}{1+\alpha}}k_+^{-1}$ when $t_1\leq t\leq t_1+\frac{1}{2(1+\alpha)}k_+^{-(1+\alpha)}$.  Sub and super solutions for Equation \eqref{eq:graph.flow} are then given by $v^{(\varepsilon)}_\pm = \pm\left(\frac{K^2}{4\varepsilon}+\varepsilon|x|^2+2\varepsilon\left(\Lambda n+2^{-\frac{1}{1+\alpha}}k_+^{-1}B\right)(t-t_1)\right)$, and so by the comparison principle we have
$
|v(x_1,t)|\leq \frac{K^2}{4\varepsilon}+2\varepsilon(\Lambda n+2^{-\frac{1}{1+\alpha}}k_+^{-1}B)(t-t_1).
$
Optimizing over $\varepsilon$ for each $t$ gives $|v(x_1,t)|\leq \sqrt{2(\Lambda n+2^{-\frac{1}{1+\alpha}}k_+^{-1}B)K^2(t-t_1)}$.  The continuity of the unit normal $\nu$ follows since it is a smooth function of the gradient.  
\end{proof}

We can now deduce H\"older continuity of the speed $F$:

\begin{lemma}\label{lem:F-Holder}
For $X$ as in Theorem \ref{thm:h-Holder}, there exist $\gamma\in(0,1)$, $C$ and $d$ depending only on $r_+(M_0)$, $r_-(M_\tau)$ and $\inf_{M_0}F$ such that 
$$
\left|F(x,t)-F(y,s)\right|\leq C\left(|X(x,t)-X(y,s)|^\gamma+|t-s|^{\gamma/2}\right)
$$
for all $(x,t)$ and $(y,s)$ in $M\times [\tau/4,\tau]$ with $|X(x,t)-X(y,s)|+\sqrt{|t-s|}<d$.
\end{lemma}

\begin{proof}
As before we write the evolving hypersurface locally as an evolving graph satisfying Equation \eqref{eq:graph.flow}.  Differentiating with respect to time, we find that the function $v=\frac{\partial u}{\partial t}$ again satisfies equation \eqref{eq:graph.flow.ut}.  The expression \eqref{eq:a-form} for $a^{ij}$, together with bounds on $Du$ and $F$ and equations \eqref{eq:DFbound} and \eqref{eq:Fbound}, imply that there exist $\lambda$ and $\Lambda$ such that $\lambda\|\xi\|^2\leq a^{ij}\xi_i\xi_j\leq\Lambda\|\xi\|^2$ for all $\xi$.  The bounds on $Du$ and $D^2u$ also imply that and $|b^i|\leq B$.   The function $\tilde v(z,t) = R^{-1}v(Rz,t_1+(1+t)R^2t)$ with $R=\min\left\{1,2^{-\frac{1}{1+\alpha}}k_+^{-1},\frac{1}{\sqrt{2(1+\alpha)}}k_+^{-\frac{1+\alpha}{2}}\right\}$ gives a solution of the equation \eqref{eq:graph.flow.ut} on $Q_1$ with $b$ replaced by $Rb$ (hence $|b|\leq B$) still holds), and so Theorem \ref{thm:KS} applies.  Rescaling back we have the estimate
$$
\osc_{Q_{r}(0,t_1+R^2)} v \leq K\left(\frac{r}{R}\right)^\gamma\osc_{Q_R(0,t_1+R^2)}v
$$
for $0<r<R$.  Finally, since $\frac{\partial u}{\partial t}$ and $Du$ are H\"older continuous, so is $F$ by 
Equation \eqref{eq:graph.flow}.
\end{proof}

Now we complete the proof of Theorem \ref{thm:h-Holder}:  Choose any $(x_1,t_1)$ with $t_1\in[\tau/4.\tau]$.  We first rescale to ensure that $F(x_1,t_1)=1$, by setting $Y(x,t)=F(x_1,t_1)^{{1}/{\alpha}}X\left(x,t_1+F(x_1,t_1)^{-\frac{1+\alpha}{\alpha}}t\right)$ and noting that $Y$ is again a solution of \eqref{eq:flow}.
We write the evolving hypersurface $Y_t(M)$ locally near $(x_1,0)$ as a graph evolving according to Equation \eqref{eq:graph.flow}, and let $v=\frac{\partial u}{\partial z^i}$ for some $i\in\{1,\dots,n\}$, so that $v$ evolves according to Equation \eqref{eq:graph.flow.ut}.  
The expressions \eqref{eq:a-form} and \eqref{eq:b-bound}, together with the H\"older continuity of $Du$ and $F$, imply that on small enough regions we can make $|Du|$ as small as desired (in particular less than $1$) and $F$ as close as desired to $1$.  By the expression \eqref{eq:a-form} for $a^{ij}$, and the bounds \eqref{eq:DFbound} and \eqref{eq:Fbound}, on small regions we have $\lambda\|\xi\|^2\leq a^{ij}\xi_i\xi_j\leq\Lambda\|\xi\|^2$, where $\lambda$ and $\Lambda$ depend only on $n$, $\alpha$ and $\mu$.  Similarly, since $F$ is close to $1$ we have bounds above and below on principal curvatures from Lemma \ref{T:pinch}, and hence bounds $|Dv|\leq B$ and $|b^iD_iv|\leq A$, where $A$ depends only on $n$, $\alpha$ and $\mu$.  Let $\sigma$ be given by Theorem \ref{thm:cordes} with these values of $\lambda$, $\Lambda$, $A$ and $B$.
Finally, by \eqref{eq:DFbound} and \eqref{eq:Fbound} we can choose $\delta_2$ sufficiently small (depending on $n$, $\alpha$, $\mu$ and $\sigma$) to ensure that $(\alpha-\sigma/4)I\leq \dot{F}(A)\leq (\alpha+\sigma/4)I$ whenever $F(A)=1$ and $\|\Acir\|^2\leq\delta_2 H^2$.  It follows that on small enough regions we have $|a^{ij}(x,t)-a^{ij}(y,s)|<\sigma$.

Now let $\tilde v(z,t)=R^{-1}v(Rz,(1+t)R^2t)$ for $R$ to be chosen.  Then $v$ satisfies Equation \eqref{eq:graph.flow.ut} on $Q_1$ (for $R$ sufficiently small) with $b^i$ replaced by $Rb^i$, and hence for sufficiently small $R$ (depending on $n$, $\alpha$, $\mu$, $r_{\pm}$ and $\inf_{M_0}F$) we can apply Theorem \ref{thm:cordes} to deduce $C^{1,\gamma}$ estimates on $\tilde v$.  Translating back to $v$ we deduce $C^{2,\gamma}$ bounds on $u$.  This implies H\"older continuity of the second fundamental form of the solution $Y_t$, and again scaling back gives H\"older continuity of the second fundamental form of $X_t$, as claimed.
\end{proof}

\section{Higher regularity}

The higher regularity for solutions of equation \eqref{eq:flow} now follows from Schauder estimates:

\begin{proposition}\label{prop:high.reg}
For each $k\in\NN$ there exists $C_k$ depending on $t_0$, $n$, $\alpha$, $\mu$, $r_{\pm}$ and $\inf_{M_0}F$ such that for any solution $X:\ M\times[0,t_0]\to\RR^{n+1}$ of equation \eqref{eq:flow} satisfying $\|\Acir\|^2<\min\{\delta_0,\delta_2\}H^2$, $\|\nabla^kA\|\leq C_k$ on $M\times[t_0/2,t_0]$.
\end{proposition}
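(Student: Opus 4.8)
The plan is to run a standard parabolic bootstrap based on the interior Schauder estimates of Theorem~\ref{thm:Schauder}, taking as its starting point the $C^{2,\gamma}$ control of the local graph representations of $M_t$ established in Theorem~\ref{thm:h-Holder}. First I would record the base regularity: applying Theorem~\ref{thm:h-Holder} with $\tau=t_0$ and with $\tau=t_0/2$ (to obtain room for backward spacetime cylinders based at times down to $t_0/2$), together with the curvature bounds of Proposition~\ref{prop:curv.bound} and Lemma~\ref{lem:curv-bounds}, we may represent $M_t$ near any point $X(x_1,t_1)$ with $t_1\in[t_0/2,t_0]$ as a graph $\{(z,-u(z,t))\}$ over a spacetime cylinder of definite size, with $u$ solving the scalar equation \eqref{eq:graph.flow}, $\partial_tu=\hat F(D^2u,Du)$, and with $u$ bounded in the parabolic H\"older space $C^{2,\gamma}$ on that cylinder (the two-sided bound \eqref{eq:D2ubound} on $D^2u$ together with Theorem~\ref{thm:h-Holder} providing exactly this). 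After rescaling the cylinder to $Q_1$ as in the proof of Theorem~\ref{thm:h-Holder}, all constants involved depend only on $t_0$, $n$, $\alpha$, $\mu$, $r_\pm$ and $\inf_{M_0}F$.

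Next I would upgrade to $C^{3,\gamma}$. Differentiating \eqref{eq:graph.flow} in a spatial direction $e$, the function $w=D_eu$ satisfies the linear equation \eqref{eq:graph.flow.ut}, $\partial_tw=a^{kl}D_kD_lw+b^kD_kw$, with $a^{kl}$ and $b^k$ the first partial derivatives of $\hat F$ evaluated along $(D^2u,Du)$, as in \eqref{eq:a-form}. The $C^{2,\gamma}$ bound on $u$ makes $a^{kl}$ and $b^k$ bounded and H\"older continuous in spacetime (with exponent $\gamma$ in space and $\gamma/2$ in time), while ellipticity $\lambda I\le(a^{kl})\le\Lambda I$ follows from \eqref{eq:DFbound}, \eqref{eq:Fbound} and the curvature bounds of Lemma~\ref{lem:curv-bounds}. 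Theorem~\ref{thm:Schauder} (with $c\equiv0$ and $f\equiv0$) then bounds $w$ in $C^{2,\gamma}(Q_{1/2})$; since $e$ was arbitrary, $u\in C^{3,\gamma}(Q_{1/2})$.

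The bootstrap continues by induction on the order. Suppose $u$ is bounded in $C^{m,\gamma}$ on some cylinder, $m\ge3$. For a spatial multi-index $\beta$ with $|\beta|=m-1$, differentiating \eqref{eq:graph.flow} $|\beta|$ times shows that $w=D^\beta u$ solves $\partial_tw=a^{kl}D_kD_lw+b^kD_kw+f_\beta$, where the principal coefficient $a^{kl}$ is unchanged and $f_\beta$ is a polynomial expression in the partial derivatives of $\hat F$ (evaluated along $(D^2u,Du)$) and in the spatial derivatives of $u$ of order at most $m$; the only term carrying an $(m+1)$-st derivative of $u$ is the leading term $a^{kl}D_kD_lw$. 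By the inductive hypothesis $f_\beta$ is bounded in parabolic H\"older norm with exponent $\gamma$, and $a^{kl}$ is H\"older continuous, on a slightly smaller cylinder, so a further application of Theorem~\ref{thm:Schauder} bounds $w$ in $C^{2,\gamma}$ there, yielding $u\in C^{m+1,\gamma}$. Reaching the order $m=k+2$ requires only finitely many such steps, each consuming a fixed fraction of the (rescaled) cylinder while the initial cylinder has definite size; hence the resulting bounds depend only on the admissible quantities. Finally, because the induced metric, its Christoffel symbols, and the second fundamental form of $M_t$ are explicit expressions in $u$, $Du$, $D^2u$ and their derivatives, $C^{k+2,\gamma}$ bounds on the (finitely many, by compactness of $M$) local graphs translate into the asserted bounds $\|\nabla^kA\|\le C_k$ on $M\times[t_0/2,t_0]$.

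The analysis is entirely routine; the only real work is bookkeeping. One must check that each round of differentiation of the fully nonlinear equation \eqref{eq:graph.flow} produces a linear equation for the top derivative with the same principal part $a^{kl}$ and with inhomogeneity involving only already-controlled lower-order derivatives, and one must pass carefully between the intrinsic covariant derivatives $\nabla^kA$ on the evolving manifold and the Euclidean derivatives of the local graph functions, keeping all constants uniform over the coordinate patches and over $t\in[t_0/2,t_0]$. I expect this geometric translation, rather than any analytic step, to be the main thing requiring care.
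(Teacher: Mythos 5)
Your proposal is correct and follows essentially the same route as the paper: start from the $C^{2,\gamma}$ control of the local graph function supplied by Theorem~\ref{thm:h-Holder}, differentiate \eqref{eq:graph.flow} once to obtain \eqref{eq:graph.flow.ut} with H\"older coefficients and apply Theorem~\ref{thm:Schauder} to get $C^{3,\gamma}$, then bootstrap inductively by differentiating further and reapplying Schauder on shrinking cylinders, finally translating back to covariant derivatives of $A$. The only difference is your more explicit bookkeeping of the multi-index/inhomogeneity structure, which the paper leaves as a remark.
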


\begin{proof}
We again work with the local graph parametrization used in the previous section, around some point $(x_1,t_1)$.
Since $\nabla^kA$ is controlled once the derivatives up to order $k$ are controlled when the hypersurface is written locally as a graph, it suffices to obtain bounds on $D^ku$ for the corresponding solutions of Equation \eqref{eq:graph.flow}.

By Theorem \ref{thm:h-Holder}, the function $u$ is $C^{2,\gamma}$, with H\"older bounds on second derivatives depending on $n$, $\alpha$, $\mu$, $r_{\pm}$, $\inf_{M_0}F$, and elapsed time.  In particular this implies that the coefficients of Equation \eqref{eq:graph.flow.ut} are H\"older continuous, and therefore by Theorem \ref{thm:Schauder} we deduce $C^{2,\gamma}$ bounds on $v=\frac{\partial u}{\partial z^i}$, and hence $C^{3,\gamma}$ bounds on $u$. The higher regularity follows by induction, by considering the evolution equations satisfied by higher spatial derivatives of $u$:  H\"older continuity of $m$th spatial derivatives of $u$ allow Theorem \ref{thm:Schauder} to be applied to the evolution equation for order $(m-1)$ spatial derivatives, yielding H\"older continuity of order $m+1$ derivatives.  By induction this yields bounds on all spatial derivatives, and by the evolution equation this also implies bounds on arbitrary spatial and time derivatives of $u$ also.
\end{proof}

\section{Convergence to a point}

\begin{proposition}\label{prop:point}
Let $M_0=X_0(M)$ be a smooth, uniformly convex, compact hypersurface with $\|\Acir\|^2<\min\{\delta_0,\delta_2\} H^2$.  Then the maximally defined solution $X:\ M\times[0,T)\to\RR^{n+1}$ of equation \eqref{eq:flow} with initial data $X_0$ is $C^\infty$, each hypersurface $M_t=X_t(M)$ is strictly locally convex, and there exists $p\in\RR^{n+1}$ such that $X_t(M)\to p\in\RR^{n+1}$ uniformly as $t\to T$.
\end{proposition}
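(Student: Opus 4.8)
The plan is to combine short-time existence, preservation of pinching (Theorem~\ref{thm:pres.pinch}), comparison with shrinking spheres, and the a priori estimates of Proposition~\ref{prop:curv.bound} and Proposition~\ref{prop:high.reg}. Write $\Omega_t$ for the compact convex body bounded by $M_t$.

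First I would invoke short-time existence for smooth, uniformly convex initial data to obtain a unique smooth solution on a maximal time interval $[0,T)$. By Theorem~\ref{thm:pres.pinch} the pinching $\|\Acir\|^2<\min\{\delta_0,\delta_2\}H^2$ persists for all $t\in[0,T)$, and by the remark following it $H>0$ throughout; Lemma~\ref{T:pinch} then gives $0<\kappa_1\le\cdots\le\kappa_n$ with a fixed bound on $\kappa_n/\kappa_1$, so each $M_t$ is strictly locally convex and \eqref{eq:flow} is a smooth equation, uniformly parabolic on compact subintervals of $[0,T)$. Hence the solution is $C^\infty$ on $M\times[0,T)$; this part is routine.

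Next I would prove $T<\infty$ and identify the limit point. Enclose $\Omega_0$ in a ball $B_R(q)$; by Condition~\ref{item4} the concentric spheres of radii $r(t)=\bigl(R^{1+\alpha}-(1+\alpha)n^\alpha t\bigr)^{1/(1+\alpha)}$ solve \eqref{eq:flow} and vanish at $T_R=R^{1+\alpha}/((1+\alpha)n^\alpha)$. The comparison principle for \eqref{eq:flow} (available since $f$ is monotone, Condition~\ref{item2}) keeps $M_t$ enclosed by the sphere of radius $r(t)$, so $r_+(M_t)\le r(t)$ and $T\le T_R<\infty$ (were $T>T_R$, $M_{T_R}$ would be a single point yet also a smooth hypersurface). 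Since the outward normal speed $-F$ is strictly negative, the support functions of $M_t$ on $S^n$ are strictly decreasing in $t$, so the bodies $\Omega_t$ are nested; in particular $r_-(M_t)$ and $r_+(M_t)$ are non-increasing, and positive for each $t<T$.

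The heart of the argument is to show $r_-(M_t)\to 0$ as $t\to T$. Being positive and non-increasing, $r_-(M_t)$ has a limit $L\ge0$. If $L>0$, then $r_-(M_t)\ge L$ on $[0,T)$, so Proposition~\ref{prop:curv.bound} gives a uniform bound $\kappa_n\le C_+$ on $M\times[T/2,T)$, while $\kappa_n\ge(\inf_{M_0}F)^{1/\alpha}/n$ (using that $\min F$ is non-decreasing and $F\le(n\kappa_n)^\alpha$) together with preserved pinching gives a uniform lower bound $\kappa_1\ge c>0$; Proposition~\ref{prop:high.reg}, applied on $[0,t_0]$ with $t_0\uparrow T$ and constants depending only on $t_0$, $n$, $\alpha$, $\mu$, $r_+(M_0)$, $r_-(M_{t_0})\ge L$ and $\inf_{M_0}F$, then bounds every $\|\nabla^kA\|$ uniformly on $M\times[T/2,T)$. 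By Arzel\`a--Ascoli the $X_t$ converge in $C^\infty$ to a smooth, strictly convex embedding as $t\to T$, and short-time existence extends the solution past $T$, contradicting maximality. Hence $L=0$. Since preserved pinching bounds $\kappa_n/\kappa_1$ by a fixed constant, \cite{AndrewsEuc}*{Theorem 5.1} (equivalently Theorem~\ref{thm:geombound}) gives $r_+(M_t)\le C_0\,r_-(M_t)\to 0$. Finally $\bigcap_{t<T}\Omega_t$ is a decreasing intersection of nonempty compact convex sets of diameter at most $2\,r_+(M_t)\to 0$, hence a single point $p$; as $M_t\subseteq\Omega_t$ and $p\in\Omega_t$ we get $\sup_{M}|X_t-p|\le 2\,r_+(M_t)\to 0$, i.e.\ $X_t(M)\to p$ uniformly. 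The step I expect to be the main obstacle is this last dichotomy: making precise that a uniform positive lower bound on the inradius, fed through Propositions~\ref{prop:curv.bound} and~\ref{prop:high.reg}, genuinely produces a smooth limit hypersurface at time $T$ and hence contradicts maximality — this is where all the analytic estimates of the paper must be assembled.
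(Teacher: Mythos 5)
Your argument is essentially the paper's: suppose $r_-(M_t)\not\to 0$, use the preserved lower bound on $F$ together with Propositions~\ref{prop:curv.bound} and~\ref{prop:high.reg} to obtain uniform bounds on the curvature and all its derivatives, then extend past $T$ to contradict maximality, and conclude that $r_-(M_t)\to 0$ and hence $r_+(M_t)\to 0$. One small imprecision: your parenthetical ``equivalently Theorem~\ref{thm:geombound}'' is not quite right at this stage, since Theorem~\ref{thm:geombound} requires the pinching $\kappa_n\le(1+\varepsilon)\kappa_1+C_1(\varepsilon)$ for \emph{every} $\varepsilon>0$, which is only established later in Corollary~\ref{cor:weak.pinch}; the correct ingredient here is just a fixed bound on the pinching ratio, for which the paper cites \cite{AndrewsEuc}*{Lemma 5.4} (and your primary citation of \cite{AndrewsEuc}*{Theorem 5.1} serves the same purpose).
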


\begin{proof}
Suppose the inradius does not approach zero as the final time is approached.
At $t=0$ the speed $F$ has a positive lower bound, since $M_0$ is uniformly convex, and the evolution equation for $F$ in Lemma \ref{T:evlnG}\ref{evolF} implies that this lower bound is maintained as long as the solution exists.  Proposition \ref{prop:curv.bound} implies that $F$ is bounded since the inradius does not approach zero, and the pinching estimate of Theorem \ref{thm:pres.pinch} and Lemma \ref{T:pinch} then imply bounds above and below on principal curvatures.  
Proposition \ref{prop:high.reg} then provides bounds on all higher derivatives of curvature.  In this case the solution can be continued to a larger time interval, by the argument of \cite{HuiskenMCF}*{Theorem 8.1}.  This would contradict the maximality of the interval of existence, so the inradius must approach zero as $t\to T$.  The circumradius does also, by \cite{AndrewsEuc}*{Lemma 5.4}.
\end{proof}

\section{Improving pinching}

Now we prove that the bounds on the pinching ratio can be improved to show that the pinching ratio is close to one when the curvature becomes large.  For $\alpha>1$ this can be done using the maximum principle, rather than the more involved integral estimates and iteration techniques which are required to prove the corresponding estimates for mean curvature flow in \cite{HuiskenMCF}.

The result is the following:

\begin{theorem}\label{thm:imp.pinch}
Let $\delta_1>0$ be as in Theorem \ref{thm:pres.pinch}.  Let $M_{0} = X_{0} \left( M \right)$ be a smooth, compact, uniformly convex hypersurface for which $\|\Acir\|^2<\sigma_0H^2$ with $\sigma_0\in(0,\min\{\delta_1,\delta_0\})$.  Let $\h=\sup_{M_0}H$, and let $X: M \times \left[ 0, T\right) \rightarrow \mathbb{R}^{n+1}$ be the solution of \eqref{eq:flow} with initial data $X_0$.  Then there exists $\lambda>0$ such that for any $(x,t)\in M\times[0,T)$ the second fundamental form $A$ of $M_t$ at $x$ satisfies
$\|\Acir\|^2\leq\min\left\{\sigma_0 H^2,\sigma_0\h^\lambda H^{2-\lambda}\right\}$.
\end{theorem}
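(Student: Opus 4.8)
The plan is to derive a parabolic differential inequality for the quantity $W_\lambda = \|\Acir\|^2 - \sigma_0\h^\lambda H^{2-\lambda}$ and show it stays nonpositive by the maximum principle, for $\lambda>0$ chosen small depending on $n$, $\alpha$ and $\mu$. The starting point is the evolution equation for $\|\Acir\|^2$ already extracted in the proof of Theorem \ref{thm:pres.pinch}: writing $f_\sigma = \|\Acir\|^2 - \sigma H^2$ one has \eqref{eq:dtZsigma}, and here we want to replace the fixed pinching function $\sigma H^2$ by the decaying one $g := \sigma_0\h^\lambda H^{2-\lambda}$. So first I would compute $\frac{\partial}{\partial t}\left(\sigma_0\h^\lambda H^{2-\lambda}\right)$ using the evolution equation for $H$ from Lemma \ref{T:evlnG}(v); the chain rule produces a $\LL$ term, a term from $\ddot F \nabla h\nabla h$, a reaction term $\dot F^{kl}h_{km}h^m{}_l\cdot(2-\lambda)H^{1-\lambda}$, a term $(1-\alpha)F\|A\|^2(2-\lambda)H^{1-\lambda}$, and crucially a \emph{negative} gradient term $-(2-\lambda)(1-\lambda)H^{-\lambda}\dot F^{kl}\nabla_k H\nabla_l H$ coming from the second derivative of $H^{2-\lambda}$ (this is where $\alpha>1$ and $\lambda$ small help rather than hurt). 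Subtracting, $W_\lambda$ satisfies $\partial_t W_\lambda = \LL W_\lambda + (\text{gradient terms}) + (\text{reaction})$.

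The reaction terms are handled as in Theorem \ref{thm:pres.pinch}: at a first zero of $W_\lambda$ we have $\|\Acir\|^2 = \sigma_0\h^\lambda H^{2-\lambda} =: \e H^2$ with $\e = \sigma_0\h^\lambda H^{-\lambda}\leq\sigma_0$, so Lemma \ref{T:Cest} applies and gives $nC-(1+n\e)H\|A\|^2\geq \e(1+n\e)(1-\sqrt{n(n-1)\e})H^3 \geq 0$; since $1-\alpha<0$ this term is strictly negative, in fact it dominates: it is bounded below in absolute value by a constant times $\e H^{2+\alpha} = \sigma_0\h^\lambda H^{2+\alpha-\lambda}$, while the extra reaction terms generated by comparing $\dot F^{kl}h_{km}h^m{}_l\|\Acir\|^2$ against $\dot F^{kl}h_{km}h^m{}_l(2-\lambda)H^{1-\lambda}\cdot\frac{\|\Acir\|^2}{H}$ cancel to leading order at the zero set (both equal $\dot F^{kl}h_{km}h^m{}_l\cdot g$ there) up to a manageable error proportional to $\lambda$. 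So the reaction part is negative for $\lambda$ small. The gradient terms need more care: we now have the original good term $-2\dot F^{ij}(\nabla_ih_{kl}\nabla_jh^{kl}-\frac1n\nabla_iH\nabla_jH)\leq -\frac{2(n-1)}{3n}H^{\alpha-1}(\alpha-\mu\sqrt{\sigma_0})\|\nabla A\|^2$, the $\ddot F$ cross term bounded by $2\sqrt{\e(1+n\e)}\mu H^{\alpha-1}\|\nabla A\|^2$, a term $+2\e\dot F^{ij}\nabla_iH\nabla_jH$ of the same type as before (now with $\e\leq\sigma_0$), and the \emph{additional} negative term $-(2-\lambda)(1-\lambda)H^{-\lambda}\dot F^{kl}\nabla_kH\nabla_lH$ which for $\lambda<1$ only helps. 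Collecting, the gradient terms at a first zero are bounded by $\left(-\frac{4(n-1)}{3n}(\alpha-\mu\sqrt{\sigma_0}) + 2\sqrt{\sigma_0(1+n\sigma_0)}\mu + \frac{2(n+2)}{3}\sigma_0(\alpha+\mu\sqrt{\sigma_0}) + (\text{terms } O(\lambda))\right)H^{\alpha-1}\|\nabla A\|^2$; since $\sigma_0<\delta_1$ the first three terms sum to something strictly negative, so for $\lambda$ small enough the whole bracket is negative.

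Putting these together, at a hypothetical first point and time where $W_\lambda=0$ we would have $\partial_t W_\lambda < \LL W_\lambda \leq 0$ while also $\partial_t W_\lambda\geq 0$ and $\LL W_\lambda\leq 0$, a contradiction; hence $W_\lambda<0$ is preserved, and since $\|\Acir\|^2<\sigma_0 H^2$ always holds by Theorem \ref{thm:pres.pinch}, we get $\|\Acir\|^2\leq\min\{\sigma_0H^2,\sigma_0\h^\lambda H^{2-\lambda}\}$ (the initial condition for $W_\lambda$ being satisfied because $H\leq\h$ at $t=0$ forces $\sigma_0\h^\lambda H^{2-\lambda}\geq\sigma_0 H^2>\|\Acir\|^2$ there). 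The main obstacle is bookkeeping the reaction terms carefully enough to see that the negative contribution from Lemma \ref{T:Cest}, which scales like $\sigma_0\h^\lambda H^{2+\alpha-\lambda}$, genuinely controls the $O(\lambda)$ errors produced by differentiating $H^{2-\lambda}$ (both the reaction error and the gradient error), uniformly in $H$; this is where homogeneity degree $\alpha>1$ is essential, since it makes the good Lemma \ref{T:Cest} term have a strictly higher power of $H$ than it would for $\alpha=1$, giving the needed room. One should also double-check that the first-zero argument is legitimate here — i.e.\ that $W_\lambda$ is initially negative uniformly (clear, by compactness and $H\leq\h$) and that the maximum principle applies on $[0,T)$ despite $H$ possibly being large, which it does since all the coefficient bounds used are scale-invariant.
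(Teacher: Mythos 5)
Your overall strategy is the paper's: form $Z=\|\Acir\|^2-\sigma_0\h^\lambda H^{2-\lambda}$, apply the maximum principle at a first zero, estimate the reaction terms via Lemma \ref{T:Cest} and Euler's relation, and estimate the gradient terms as in Theorem \ref{thm:pres.pinch}, choosing $\lambda$ small at the end. One point you should make explicit (the paper does): at a new zero of $Z$ one necessarily has $H\geq\h$, because $\|\Acir\|^2=\sigma_0\h^\lambda H^{2-\lambda}\leq\sigma_0 H^2$ forces $(\h/H)^\lambda\leq 1$; this is what lets you set $\sigma:=\sigma_0(\h/H)^\lambda\leq\sigma_0$ and quote Lemma \ref{T:Cest} with $\e=\sigma<\delta_0$.

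There is, however, a genuine sign/bookkeeping confusion in your treatment of the $\dot F^{ij}\nabla_iH\nabla_jH$ term. Writing $g(H)=\sigma_0\h^\lambda H^{2-\lambda}$, the identity $\LL g=g'\LL H+g''\dot F^{ij}\nabla_iH\nabla_jH$ together with $\partial_tg=g'\partial_tH$ shows that the contribution to $\partial_tZ-\LL Z$ from the second derivative of $g$ is $+g''\dot F^{ij}\nabla_iH\nabla_jH=+(2-\lambda)(1-\lambda)\sigma\,\dot F^{ij}\nabla_iH\nabla_jH$ with $\sigma=\sigma_0\h^\lambda H^{-\lambda}$: this is \emph{positive}, hence a bad term, not a ``negative gradient term which \ldots only helps.'' There is no separate $+2\e\dot F^{ij}\nabla_iH\nabla_jH$ in this bookkeeping---you are double-counting by listing both; in the constant-$\sigma$ computation of Theorem \ref{thm:pres.pinch} the $+2\sigma\dot F^{ij}\nabla_iH\nabla_jH$ arises precisely as the $g''$ term for $g=\sigma H^2$. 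The paper's equation \eqref{eq:dtZ} splits the correct coefficient $(2-\lambda)(1-\lambda)\sigma=2\sigma-\lambda(3-\lambda)\sigma$: the $2\sigma$ part is the same as in the constant case, and the $-\lambda(3-\lambda)\sigma$ part is a small bonus which is simply discarded (for $\lambda\leq 3$). Fortunately this does not wreck your argument---since $(2-\lambda)(1-\lambda)\sigma\leq 2\sigma$ for $\lambda\in[0,1]$, the gradient bracket you wrote is still a valid upper bound---but the reasoning attached to it should be corrected. Separately, the explanation ``$\alpha>1$ \ldots makes the good Lemma \ref{T:Cest} term have a strictly higher power of $H$'' is off: the good term $-\tfrac{2(\alpha-1)F}{n}(nC-(1+n\sigma)H\|A\|^2)$ and the bad term $\lambda\sigma\dot F^{ij}h_{im}h^m{}_jH^2\leq\lambda\alpha\sigma FH^3$ both scale as $\sigma F H^3$; the role of $\alpha>1$ is that the good term carries the factor $(\alpha-1)>0$ at all, so that choosing $\lambda$ small relative to $(\alpha-1)$ wins, not a mismatch of powers of $H$.
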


\begin{proof}
At $t=0$ we have $H\leq \h$, and so $\|\Acir\|^2<\sigma_0 H^2=\min\left\{\sigma_0 H^2,\sigma_0\h^\lambda H^{2-\lambda}\right\}$.  
We have already proved that the condition $\|\Acir\|^2\leq \sigma_0H^2$ is preserved.  It remains to prove that for suitable $\lambda>0$ the quantity $Z=\|\Acir\|^2-\sigma_0 \h^\lambda H^{2-\lambda}$ cannot attain a new zero maximum at a point where $H\geq \h$.  At such a point we let $\sigma=\sigma_0\left(\frac{\h}{H}\right)^\lambda\leq\sigma_0$, and note that $Z=0$, $\LL Z\leq 0$ and $\partial_tZ\geq 0$.  The latter are related by the evolution equation for $Z$:
\begin{align}
\frac{\partial Z}{\partial t} &= \LL Z + 2\!\left(\!h^{ij} - \!\left(\!\frac{1}{n} + \sigma\!\right)\!H g^{ij} \!\right)\! \ddot{F}^{kl, rs} \nabla_{i} h_{kl} \nabla_{j} h_{rs} - 2 \dot{F}^{ij} \!\left(\!\!\nabla_{i} h_{kl} \nabla_{j} h^{kl}\!-\! \frac{1}{n} \nabla_{i} H \nabla_{j} H\!\!\right)\notag\\
  &\quad\null+ 2 \sigma \dot{F}^{ij} \nabla_{i} H \nabla_{j} H +\lambda\sigma H\ddot{F}(\nabla_ih,\nabla_ih)-\sigma\lambda(3\!-\!\lambda\!)\dot{F}^{ij}\nabla_iH\nabla_jH+ 2 \dot{F}^{ij} h_{im} h^{m}{}_{j} Z\label{eq:dtZ}\\
  &\quad\null+\lambda\sigma  \dot{F}^{ij} h_{im} h^{m}{}_{j}H^2 \!-\!\frac{2(\alpha-1)F}{n}\!\!\!\left(\!nC \!-\! \left( 1\! + \!n \sigma \right)\! H \!\left\| A^{2} \right\|\!+\!\frac{n\lambda\sigma}{2}H\|A\|^2\! \!\right)\!\!.\notag
\end{align}
The terms which do not involve $\lambda$ can be estimated exactly as in the proof of Theorem \ref{thm:pres.pinch}.  For the remaining terms we have the following straightforward estimates:  Choosing $\lambda\leq 3$ we discard the third term on the second line.  The preceding term we estimate by
$$
\lambda\sigma H\ddot{F}(\nabla_ih,\nabla_ih)\leq \lambda\sigma\mu H^{\alpha-1}\|\nabla A\|^2.
$$
Thus all of the terms involving gradients of curvature can be estimated by $H^{\alpha-1}\|\nabla A\|^2$ times
$$
-\frac{4(n-1)}{3n}(\alpha-\mu\sqrt{\sigma})+2\sqrt{\sigma(1+n\sigma)}\mu
+\frac{2(n+2)}{3}\sigma\left(\alpha+\mu\sqrt{\sigma}\right)+\lambda\sigma\mu.
$$
Since $\sigma\leq \sigma_0<\delta_1$, the first three terms have a strictly negative sum, and so the result is nonpositive provided $\lambda$ is sufficiently small depending on $\alpha$, $n$, $\mu$ and $\sigma_0$.

The terms in the last line of equation \eqref{eq:dtZ} can be estimated as follows:  We discard the negative term in the last bracket, and estimate the others using Lemma \ref{T:Cest}.  To bound the preceding term we note $h_{im}h^m{}_j\leq Hh_{ij}$ since the principal curvatures are non-negative, and so
$$
\lambda\sigma \dot{F}^{ij}h_{im}h^m{}_j H^2\leq\lambda\sigma\dot F^{ij}h_{ij}H^3 = \lambda\alpha\sigma FH^3.
$$
This gives the following bound for the last line of
\eqref{eq:dtZ}:
$$
\left(\lambda\sigma - \frac{2}{n}(\alpha-1)\sigma(1+n\sigma)\left(1-\sqrt{n(n-1)\sigma}\right)\right)FH^3,
$$
which is strictly negative for $\lambda>0$ sufficiently small (depending only on $n$, $\alpha$, $\mu$ and $\sigma_0$).
\end{proof}

\begin{corollary}\label{cor:weak.pinch}
If $X$ is a solution of equation \eqref{eq:flow} as in Theorem \ref{thm:imp.pinch}, then for any $\varepsilon>0$ there exists a constant $C_1(\varepsilon)\geq 0$ such that $\kappa_{n}\leq (1+\varepsilon)\kappa_1+C_1(\varepsilon)$.
\end{corollary}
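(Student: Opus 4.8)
The plan is to feed the improved pinching estimate of Theorem \ref{thm:imp.pinch} into Lemma \ref{T:pinch} to obtain a pointwise bound on the ratio $\kappa_n/\kappa_1$ which degenerates to $1$ as $H\to\infty$. First I would fix $\varepsilon>0$. From Theorem \ref{thm:imp.pinch} we have $\|\Acir\|^2\leq \sigma_0\h^\lambda H^{2-\lambda}$ everywhere, with $\lambda>0$ and $\h=\sup_{M_0}H$ fixed by that theorem. Writing this as $\|\Acir\|^2\leq \tilde\varepsilon(H)H^2$ with $\tilde\varepsilon(H):=\sigma_0\h^\lambda H^{-\lambda}$, we see that $\tilde\varepsilon(H)\to 0$ as $H\to\infty$, and $\tilde\varepsilon(H)\leq \sigma_0<\min\{\delta_0,\delta_1\}<\tfrac{1}{n(n-1)}$ always (since $H\geq \h$ on the relevant set, and on the complementary set $\{H<\h\}$ we simply have $\|\Acir\|^2\leq\sigma_0H^2$). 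Hence Lemma \ref{T:pinch} applies at every point and gives
$$
\kappa_n\leq \left(1+\sqrt{n(n-1)\tilde\varepsilon(H)}\right)\frac Hn,
\qquad
\kappa_1\geq \left(1-\sqrt{n(n-1)\tilde\varepsilon(H)}\right)\frac Hn.
$$

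Next I would convert this into the desired affine inequality. Dividing the two displayed bounds, at any point where $\sqrt{n(n-1)\tilde\varepsilon(H)}\leq \tfrac\varepsilon{2+\varepsilon}$ we get $\kappa_n\leq(1+\varepsilon)\kappa_1$. Since $\tilde\varepsilon$ is a decreasing function of $H$, the condition $\sqrt{n(n-1)\tilde\varepsilon(H)}\leq\tfrac\varepsilon{2+\varepsilon}$ holds for all $H\geq H_0(\varepsilon)$, where $H_0(\varepsilon)$ depends only on $\varepsilon$, $n$, $\sigma_0$, $\lambda$ and $\h$. On the complementary region $\{H<H_0(\varepsilon)\}$ we have the crude bound $\kappa_n\leq H<H_0(\varepsilon)$, so setting $C_1(\varepsilon):=H_0(\varepsilon)$ we obtain $\kappa_n\leq(1+\varepsilon)\kappa_1+C_1(\varepsilon)$ everywhere: on $\{H\geq H_0\}$ the first term already dominates, and on $\{H<H_0\}$ the constant $C_1(\varepsilon)$ does. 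This is exactly the hypothesis required to invoke Theorem \ref{thm:geombound} later.

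There is essentially no obstacle here — the statement is a direct packaging of Theorem \ref{thm:imp.pinch} and Lemma \ref{T:pinch}. The only mild care needed is in handling the two regimes $H\geq\h$ (where the improved estimate $\|\Acir\|^2\leq\sigma_0\h^\lambda H^{2-\lambda}$ is the sharp one) and $H<\h$ (where one falls back on $\|\Acir\|^2\leq\sigma_0H^2$), and in checking that the argument of the square root stays below $\tfrac1{n(n-1)}$ so that Lemma \ref{T:pinch} is legitimately applicable; both are immediate since $\sigma_0<\min\{\delta_0,\delta_1\}$ and $\tilde\varepsilon(H)\leq\sigma_0$ for $H\geq\h$. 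One should also note that the constant $C_1(\varepsilon)$ produced is independent of $t$, which is what is needed for the application in the next section.
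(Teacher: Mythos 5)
Your proof is correct, and it takes a genuinely different route from the paper's. The paper starts from the algebraic inequality $\kappa_n-\kappa_1\leq\sqrt{n}\,\|\Acir\|$ (an immediate consequence of \eqref{eq:tracelessA}), plugs in the decay estimate $\|\Acir\|\lesssim \h^{\lambda/2}H^{1-\lambda/2}$ from Theorem~\ref{thm:imp.pinch}, and then applies Young's inequality to split the sublinear power $H^{1-\lambda/2}$ into a term $\varepsilon' H$ plus a constant; Lemma~\ref{T:pinch} then enters only at the last step to trade the $H$ for a $\kappa_1$. You instead rewrite the improved pinching as $\|\Acir\|^2\leq\tilde\varepsilon(H)H^2$ with $\tilde\varepsilon(H)\to 0$, feed that directly into Lemma~\ref{T:pinch} to get two-sided bounds on $\kappa_1,\kappa_n$, divide to obtain a ratio bound tending to $1$, and then split into the regimes $H\geq H_0(\varepsilon)$ (ratio close to $1$) and $H<H_0(\varepsilon)$ (curvature bounded, so $\kappa_n<H_0(\varepsilon)$ serves as the additive constant). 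Both are short and correct; the paper's Young-inequality argument yields a somewhat more explicit closed form for $C_1(\varepsilon)$, while yours is more transparent about \emph{why} the statement holds (the pointwise ratio degenerates to $1$ as $H\to\infty$, and small-$H$ regions contribute only a harmless additive constant). Your care in checking $\tilde\varepsilon(H)\leq\sigma_0<\tfrac{1}{n(n-1)}$ so that Lemma~\ref{T:pinch} is applicable, and in noting $C_1$ is $t$-independent, is exactly the right bookkeeping.
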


\begin{proof}
By Theorem \ref{thm:imp.pinch} we have
\begin{align*}
\kappa_n-\kappa_1&\leq \sqrt{n}\|\Acir\|\\
&\leq \sqrt{n}\sigma_0\h\left(1+(H/\h)^{1-\lambda/2}\right)\\
&\leq \sqrt{n}\sigma_0\h\left(1+\frac{1}{p}a^{-p}+\frac{1}{q}a^q(H/\h)^{q(1-\lambda/2)}\right)
\end{align*}
for any $a>0$ and $1/p+1/q=1$, by Young's inequality.  Choosing $q(1-\lambda/2)=1$ gives 
$$
\kappa_n-\kappa_1\leq \sqrt{n}\sigma_0a^{2/(2-\lambda)}H+\sigma_0\h(1+\lambda/2a^{-2/\lambda})
\leq \varepsilon\kappa_1+C_1(\varepsilon),
$$
where 
$$
C_1(\varepsilon)= \sigma_0\h\left(1+\frac{\lambda}{2}\left(\frac{\varepsilon(1-\sqrt{n(n-1)\sigma_0})}{n^{\frac32}\sigma_0}\right)^{-\frac{2-\lambda}{\lambda}}\right), 
$$
and we chose $a^{2/(2-\lambda)} = \varepsilon(1-\sqrt{n(n-1)\sigma_0})/(n^{3/2}\sigma_0)$ and used Lemma \ref{T:pinch}.
\end{proof}

\section{lower speed bounds}

The results of Theorem \ref{thm:geombound} and Corollary \ref{cor:weak.pinch} imply that the evolving hypersurfaces are close to spheres when their diameter is small.  In this section we use this to deduce lower bounds on the speed.  Our argument here is quite general and does not require any assumptions on the speed except for homogeneity and parabolicity.

\begin{proposition}\label{prop:speedbound}
Let $X: M\times [0,T)\to\RR^{n+1}$ be a smooth, strictly convex solution of a flow of the form \eqref{eq:flow} which contracts to a point as $t\to T$, and such that the ratio of circumradius to inradius approaches $1$ as $t\to T$.   Then there exists $t_-<T$ and $C>0$ such that $F(z,t)\geq C(T-t)^{-\frac{\alpha}{1+\alpha}}$ for all $z\in \bar M$ and $t\in[t_-,T)$.
\end{proposition}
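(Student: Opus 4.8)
The plan is to combine an upper barrier argument for the circumradius with the two-sided control on shape near the final time. First I would fix a time $t_1<T$ close enough to $T$ that, by hypothesis, $r_+(M_t)\leq (1+\rho)r_-(M_t)$ for all $t\in[t_1,T)$, where $\rho>0$ is a small constant to be chosen. The key geometric observation is that a uniformly convex hypersurface whose circumradius and inradius are within a factor $(1+\rho)$ is trapped between two concentric spheres of nearly equal radii; in particular, for each $t$ there is a point $p_t$ and a radius $R(t)$ with $B_{R(t)/(1+\rho)}(p_t)\subseteq \Omega_t \subseteq B_{R(t)}(p_t)$, where $\Omega_t$ is the region enclosed by $M_t$. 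Since $M_t$ contracts to a point $p$ as $t\to T$, we have $R(t)\to 0$ and $p_t\to p$.

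Next I would produce an upper bound on $R(t)$ in terms of $T-t$ using a shrinking-sphere \emph{inner} barrier. By Conditions~\ref{T:Fconds}, the sphere of radius $r$ moves under \eqref{eq:flow} with speed $F(\tfrac1r,\dots,\tfrac1r) = n^\alpha r^{-\alpha}$, so a sphere $S_\rho(t)$ of radius $\rho(t)$ with $\rho(t_1)=r_-(M_{t_1})$ solving $\rho' = -n^\alpha\rho^{-\alpha}$, i.e. $\rho(t)^{1+\alpha} = r_-(M_{t_1})^{1+\alpha} - (1+\alpha)n^\alpha(t-t_1)$, remains enclosed by $M_t$ for as long as it exists (avoidance principle). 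Hence $M_t$ does not become extinct before this sphere does, which forces $(1+\alpha)n^\alpha(T-t_1)\geq r_-(M_{t_1})^{1+\alpha}$; more usefully, applying the same argument starting from any time $s\in[t_1,T)$ gives $r_-(M_s)^{1+\alpha}\leq (1+\alpha)n^\alpha(T-s)$, and therefore also $r_+(M_s)\leq (1+\rho)r_-(M_s)\leq (1+\rho)\big((1+\alpha)n^\alpha(T-s)\big)^{1/(1+\alpha)}$. This is the desired upper bound $R(s)\le c_1 (T-s)^{1/(1+\alpha)}$.

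Now I would extract the lower speed bound. Fix $t\in[t_1,T)$ and let $z\in\bar M$. Consider the \emph{outer} sphere $B_{R(t)}(p_t)\supseteq \Omega_t$, and let this sphere flow under \eqref{eq:flow}: its radius $\bar\rho(\cdot)$ satisfies $\bar\rho(t)=R(t)$ and $\bar\rho^{1+\alpha}(s) = R(t)^{1+\alpha} - (1+\alpha)n^\alpha(s-t)$, becoming extinct at time $t^* = t + \tfrac{R(t)^{1+\alpha}}{(1+\alpha)n^\alpha}$. By the avoidance principle, $M_s\subseteq B_{\bar\rho(s)}(p_t)$ for $s\in[t,t^*)$; in particular $M_s$ becomes extinct no later than $t^*$, so $T\leq t^* = t + \tfrac{R(t)^{1+\alpha}}{(1+\alpha)n^\alpha}$, i.e. $R(t)^{1+\alpha}\geq (1+\alpha)n^\alpha (T-t)$, giving a matching lower bound $R(t)\geq c_2(T-t)^{1/(1+\alpha)}$ on the circumradius. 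Combining with the upper bound on $r_-$ from the previous paragraph, we get $r_-(M_t)\geq \tfrac{1}{1+\rho}R(t)\geq \tfrac{c_2}{1+\rho}(T-t)^{1/(1+\alpha)}$, so the inradius is bounded below by a constant times $(T-t)^{1/(1+\alpha)}$. Finally I feed this into Proposition~\ref{prop:curv.bound}: applying that estimate on the interval $[t-\tfrac12(T-t),\,t]$ with $r_-$ of the endpoint bounded below by $c_3(T-t)^{1/(1+\alpha)}$ bounds $\kappa_{\max}$ — and hence, by Lemma~\ref{T:pinch}, $H$ and therefore $F\leq H^\alpha + \tfrac{\mu}{2\alpha}H^{\alpha-2}\|\Acir\|^2 \leq C H^\alpha$ — from above by $C(T-t)^{-\alpha/(1+\alpha)}$ at time $t$. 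But an upper bound on $F$ is the wrong direction; instead I would use the maximum principle on $F$ directly: since $\partial_t F = \LL F + \dot F^{kl}h_{km}h^m{}_l F \geq \LL F$, and $\dot F^{kl}h_{km}h^m{}_l \geq \alpha\kappa_{\min}F \geq c\,F^{1+1/\alpha}$ by pinching (as in Section~7), $\min_{M_t}F =: \phi(t)$ satisfies $\phi'\geq c\,\phi^{1+1/\alpha}$ in the barrier sense. Integrating this ODE from $t$ to $T$ and using that $\phi\to\infty$ as $t\to T$ (which follows because the curvature blows up at the extinction time, itself a consequence of the lower bound on $r_-$ just established together with Proposition~\ref{prop:high.reg}, since bounded curvature would allow continuation past $T$) yields $\phi(t)\geq \big(\tfrac{\alpha}{c}\big)^{\alpha}(T-t)^{-\alpha/(1+\alpha)}$, which is the claim with $t_-=t_1$.

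The main obstacle I anticipate is the last step: to run the ODE comparison $\phi'\geq c\phi^{1+1/\alpha}$ one needs to know a priori that $\phi(t)=\min_{M_t}F\to\infty$ as $t\to T$ (otherwise the integrated inequality gives nothing). Establishing this requires ruling out the alternative that $F$ stays bounded, which is exactly where the contraction-to-a-point hypothesis, the lower bound on the inradius derived above, and the higher-regularity/continuation argument of Proposition~\ref{prop:high.reg} (as in \cite{HuiskenMCF}*{Theorem 8.1}) must be combined carefully; the geometric barrier estimates are otherwise routine applications of the avoidance principle for \eqref{eq:flow}.
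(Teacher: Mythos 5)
Your setup (the two-sided control $r_\pm(M_t)\asymp (T-t)^{1/(1+\alpha)}$ via shrinking-sphere barriers) matches the paper's preliminary estimates, but the final step — the ODE argument for $\phi(t)=\min_{M_t}F$ — is the wrong tool for this job, and this is where your proof breaks. First, a small slip: from $\partial_t F=\LL F+\dot F^{kl}h_{km}h^m{}_l\,F$ with $\dot F^{kl}h_{km}h^m{}_l\geq c F^{1+1/\alpha}$ you get $\phi'\geq c\,\phi^{2+1/\alpha}$, not $c\,\phi^{1+1/\alpha}$. But the more serious issue is the direction of the comparison: a differential inequality of the form $\phi'\geq c\,\phi^{p}$ combined with $\phi\to\infty$ as $t\to T$ yields an \emph{upper} bound on $\phi(t)$, not a lower bound. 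Setting $g=\phi^{-(1+1/\alpha)}$, the inequality $\phi'\geq c\,\phi^{2+1/\alpha}$ becomes $g'\leq -(1+1/\alpha)c$, and integrating from $t$ to $T$ with $g(T)=0$ gives $g(t)\geq (1+1/\alpha)c(T-t)$, i.e.\ $\phi(t)\leq C(T-t)^{-\alpha/(1+\alpha)}$. Intuitively, knowing the quantity grows \emph{at least} this fast constrains how large it could have been earlier (else it blows up before $T$); it cannot certify that it was already large. And even the hypothesis $\phi\to\infty$ as $t\to T$ is not free: curvature blowup gives $\sup_{M_t}\kappa_{\max}\to\infty$, hence $\max_{M_t}F\to\infty$, but $\min_{M_t}F\to\infty$ is not automatic from continuation arguments alone.

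The paper instead proves Lemma~\ref{lem:Smoc} (a Smoczyk-type Harnack estimate, via the one-parameter family of rescaled solutions $x_\lambda$ and the avoidance principle): if $M_{t_0}$ encloses $p$ then $\langle x-p,\nu\rangle + (1+\alpha)(t-t_0)F\geq 0$, i.e.\ $F(x,t)\geq \frac{\langle p - X(x,t),\nu(x,t)\rangle}{(1+\alpha)(t-t_0)}$ pointwise. Combining this with the sphere barriers and the near-roundness (so that the gap between $M_{t_0}$ and $M_t$ in the normal direction is comparable to $r_-$) gives the desired pointwise lower bound $F\geq C(T-t)^{-\alpha/(1+\alpha)}$. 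This Harnack-type inequality is the essential ingredient your proposal is missing; without it, the ODE comparison cannot produce a lower bound on $\min F$. To repair your argument you would need to replace the last step with Lemma~\ref{lem:Smoc} or an equivalent Harnack inequality for the flow.
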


\begin{proof}

We begin with the following observation, originally due to Smoczyk \cite{Smoczyk}*{Proposition 4} for the mean curvature flow.

\begin{lemma}\label{lem:Smoc}
If $M_{t_0}$ encloses $p\in\RR^{n+1}$, then $\langle x-p,\nu\rangle +(1+\alpha)(t-t_0)F\geq 0$ for $t_0\leq t<T$.
\end{lemma}

\begin{proof}[Proof of Lemma] 
This can be proved directly from the evolution equations in Lemma \ref{T:evlnG}, \ref{evol.s.f} and\ref{evolF}.  We give a proof which gives some further insight in to the meaning of the result:

The homogeneity of $F$ implies that 
$x_\lambda(z,t)=\lambda \left(x(z,\lambda^{-(1+\alpha)}(t-t_0))-p\right)+p$ is again a solution of the same flow, for any $\lambda>0$ and $p\in\RR^{n+1}$.
When $t=t_0$ this amounts to a rescaling about the point $p$, so if $M_{t_0}$ encloses $p$ then $x_\lambda(\bar M,t_0)$ encloses $M_{t_0}$ for any $\lambda>1$.  By the comparison principle, $x_\lambda(\bar M,t)$ encloses $M(t)$ for each $t\geq t_0$ and $\lambda>1$, and we deduce that 
$$
\left\langle \frac{\partial}{\partial\lambda}x_\lambda(z,t)\Big|_{\lambda=1},\nu(z,t)\right\rangle \geq 0
$$
for all $z$ and $t\geq t_0$.  Computing the left-hand side explicitly, we find
$$
0\leq \left\langle x-p+(1+\alpha)(t-t_0)F\nu,\nu\right\rangle =\langle x-p,\nu\rangle + (1+\alpha)(t-t_0)F.
$$
\end{proof}

\noindent\emph{Proof of Proposition \ref{prop:speedbound}, Continued.} 
 Choose $t_1$ close enough to $T$ to ensure $r_+(M_t)\leq (1+\varepsilon)r_-(M_t)$ for all $t\in[t_1,T)$, where $\varepsilon>0$ is such that $(1+2\varepsilon)^{1+\alpha}<5/4$.  Fix $t_0\in[t_1,T)$, and write $r_-=r_-(M_{t_0})$.   Choose $q\in\RR^{n+1}$ to be the incentre of $M_{t_0}$, so that 
$B_{r_-}(q)$ is enclosed by $M_{t_0}$, and $B_{(1+2\varepsilon)r_-}(q)$ encloses $M_{t_0}$.  By the comparison principle, $B_{r(t)}(q)$ is enclosed by $M_t$ for $t\geq t_0$ where 
$r(t)=\left(r_-^{1+\alpha}-(1+\alpha)(t-t_0)\right)^{1/(1+\alpha)}$, and $B_{R(t)}(q)$ encloses $M_t$ for $t\geq t_0$ where $R(t)=\left(((1+2\varepsilon)r_-)^{1+\alpha}-(1+\alpha)(t-t_0)\right)^{1/(1+\alpha)}$.
In particular this implies $T\geq t_0+\frac{r_-^{1+\alpha}}{1+\alpha}$.

Fix $x\in M$, and let $t=t_0+\frac{3r_-^{1+\alpha}}{4(1+\alpha)}$.  Choose $q$ to be the point in $M_{t_0}$ which maximizes $\langle q,\nu(x,t)\rangle$, and apply Lemma \ref{lem:Smoc}.  This gives
$$
F(x,t)\geq \frac{\langle q-X(x,t),\nu(x,t)\rangle}{(1+\alpha)(t-t_0)}
\geq \frac{4(r_--R(t))}{3r_-^{1+\alpha}}.
$$
The estimates above give $R(t)=r_-\left((1+2\varepsilon)^{1+\alpha}-3/4\right)^{1/(1+\alpha)}
\leq 2^{-1/(1+\alpha)}r_-$ by our choice of $\varepsilon$.  Also note that $T-t\geq \frac{r_-^{1+\alpha}}{4(1+\alpha)}$.  These imply
$$
F(x,t)\geq  \frac{4(1-2^{-1/(1+\alpha)})}{3(4(1+\alpha)(T-t))^{\alpha/(1+\alpha)}} = C(T-t)^{-\frac{\alpha}{1+\alpha}}
$$
as required, for all $x\in M$ and all $t\geq t_-=t_1+\frac{3r_-(t_1)^{1+\alpha}}{4(1+\alpha)}$.
\end{proof}

\begin{remark*}
An inspection of the proof will reveal that the result of Proposition \ref{prop:speedbound} holds provided the ratio of circumradius to inradius is less than some explicit constant depending on $\alpha$ and $n$, so the assumption that this ratio approaches $1$ could be weakened.
\end{remark*}

\section{The convergence theorem}

\begin{proof}[Proof of Theorem \ref{thm:main.flow}]

The existence until the solution converges to a point $p$ was proved in Proposition \ref{prop:point}.  
We next prove Hausdorff convergence of the rescaled hypersurfaces $\tilde M_t=\tilde X_t(M)$ to the unit sphere:  
By Corollary \ref{cor:weak.pinch} and Theorem \ref{thm:geombound}, for any $\rho>0$ there exists $C_2(\rho)$ such that $r_+(M_t)\leq (1+\rho)r_-(M_t)$ whenever $r_+(M)\leq C_2(\rho)$.  Since $M_t$ converges to a point we have $r_+(M_t)\to 0$ as $t\to T$, so for each $\rho>0$ there exists $t(\rho)<T$ such that $r_+(M_t)\leq (1+\rho)r_-(M_t)$ for $t(\rho)\leq t<T$, and since $M_t$ converges to $p$, $p$ is enclosed by $M_t$ for each $t$.  We can relate $r_\pm(M_t)$ to $T-t$:  The time of existence is no less than that of the sphere of radius $r_-(M_t)$, and no greater than that of the sphere of radius $r_+(M_t)$, so 
\begin{equation}\label{eq:Tvsr}
\frac{r_-(M_t)^{1+\alpha}}{1+\alpha}\leq T-t\leq \frac{r_+(M_t)^{1+\alpha}}{1+\alpha}\leq \frac{((1+\rho)r_-(M_t))^{1+\alpha}}{1+\alpha}
\end{equation}
for $t\geq t(\rho)$.  We can also control the distance from $p$ to the centre $p_t$ of the insphere of $M_t$:  $p$ is enclosed by $M_{t'}$, which is enclosed by the sphere of radius $\left(r_+(M_t)^{1+\alpha}-(1+\alpha)(t'-t)\right)^{1/(1+\alpha)}$ about $p_t$ for $t\leq t'<T$.  Taking $t'\to T$ gives $|p-p_t|\leq  
(\left(r_+(M_t)^{1+\alpha}-(1+\alpha)(T-t)\right)^{1/(1+\alpha)}\leq
\left(r_+(M_t)^{1+\alpha}-r_-(M_t)^{1+\alpha}\right)^{1/(1+\alpha)}$.  This implies that
\begin{equation}\label{eq:ptvsp}
\frac{|p_t-p|}{((1+\alpha)(T-t))^{1/(1+\alpha)}}\leq \left((1+\rho)^{1+\alpha}-1\right)^{1/(1+\alpha)}
\end{equation}
for $t>t(\rho)$.  The estimates \eqref{eq:Tvsr} and \eqref{eq:ptvsp} amount to Hausdorff convergence of $\tilde M_t$ to the unit sphere.

Next we observe that by Proposition \ref{prop:speedbound}, for $t\geq t_-$ we have 
$F\geq C(T-t)^{-\frac{\alpha}{1+\alpha}}$, and hence by the pinching estimate of Theorem \ref{thm:pres.pinch} and Lemma \ref{T:pinch} we have $\kappa_{\min}\geq (T-t)^{-\frac{1}{1+\alpha}}$.

We deduce an upper curvature bound from Proposition \ref{prop:curv.bound}, using scaling to deduce the correct dependence on the remaining time:   Fix $t_*\in[T/2,T)$, and consider rescaled solutions defined by  
$$
\tilde X^{(t_*)}(p,t) = \frac{X(p,t_*+(T-t_*)t)}{(T-t_*)^{\frac{1}{1+\alpha}}}.  
$$
For any $t^*\in [T/2,T)$, $\tilde X^{(t_*)}$ is a solution of equation \eqref{eq:flow} on $M\times [-1,0]$.  Given $\rho>0$, if $T-t_*<(T-t(\rho))/2$ then by \eqref{eq:Tvsr} we have $r_-(\tilde X^{(t_*)}_t)\geq \frac{(1+\alpha)^{\frac{1}{1+\alpha}}}{1+\rho}$ and $r_+(\tilde X^{(t_*)}_t)\leq (1+\rho)(2(1+\alpha))^{\frac{1}{1+\alpha}}$, for $-1\leq t\leq 0$.  It follows from Proposition \ref{prop:curv.bound} that the rescaled solutions have $F(x,0)\leq C$ independent of $t_*$, and hence the unrescaled solutions have
$F\leq C(T-t)^{-\frac{\alpha}{1+\alpha}}$.

Estimates on higher derivatives of curvature also follow:  The solutions $\tilde X^{(t_*)}$ defined above have bounds on curvature in $C^k$ for any  $k$ on $M\times [-1/2,0]$, by Proposition \ref{prop:high.reg}.
It follows that $\|\nabla^{(k)}A\|\leq C_k(T-t)^{-\frac{\alpha(1+k)}{1+\alpha}}$ for each $k$, and the rescaled hypersurfaces $\frac{M_t-p}{((1+\alpha)(T-t))^{\frac{1}{1+\alpha}}}$ have uniform bounds on  curvature and all higher derivatives.  

$C^\infty$ convergence of the rescaled embeddings now follows:   The pinching estimate of Theorem 
\ref{thm:imp.pinch} implies that (for $t$ close enough to $T$) the rescaled maps satisfy $\|\Acir\|^2\leq C(T-t)^{\frac{\lambda}{1+\alpha}}$.  By interpolation we also have $\|\nabla\Acir\|\leq C(T-t)^{\beta}$ for some $\beta>0$, and hence by Lemma \ref{T:norm} $\|\nabla A\|\leq C(T-t)^{\beta}$.  Then interpolation again gives that $\|\nabla^{(k)}A\|\leq C_k(T-t)^{\beta_k}$ for every $k$, where $\beta_k>0$.  One can define a normalized flow as in \cite{HuiskenMCF}*{Section 9}.  The time variable in the normalized flow is $\tau=\log(T-t)$, so we have exponential decay of all derivatives of the second fundamental form for the normalized flow, and the argument in \cite{HuiskenMCF}*{Section 10} can be applied.
\end{proof}

\begin{remark*}
A similar result to Theorem \ref{thm:main.flow} also holds in the case $\alpha=1$:   By a slight modification of the proof of Theorem \ref{thm:pres.pinch}, flow by an essentially arbitrary smooth, homogeneous degree one speed $F$ preserves sufficiently strong pinching of a convex hypersurface, with pinching ratio determined by $n$ and a bound for the second derivatives of $F$.  The argument given in Section \ref{sec:Holderbounds} applies with minor changes to prove H\"older continuity of the second fundamental form, and higher regularity follows by Schauder estimates.  Convergence of the rescaled hypersurfaces then follows by the argument given in \cite{AndrewsEuc}.  We note that a lower bound on $F$ follows in this case from the Krylov-Safonov Harnack inequality, so this situation does not need the geometric result of Section \ref{sec:geometric}.
\end{remark*}

\begin{bibdiv}
\begin{biblist}
\bib{Aless}{thesis}{
	author={Alessandroni, Roberta},
	title={Evolution of hypersurfaces by curvature functions},
	date={2008},
	eprint={http://dspace.uniroma2.it/dspace/handle/2108/661},
	organization={Universit\'a degli studi di Roma ``Tor Vergata''},
	type={Ph.D. Thesis},
}	

\bib{AS}{article}{
    author={Alessandroni, Roberta},
    author={Sinestrari, Carlo},
    title={Evolution of hypersurfaces by powers of the scalar curvature},
    status={to appear in Ann. Sc. Norm. Super. Pisa Cl. Sci.},
    date={2009},
}
    
\bib{AndrewsEuc}{article}{
   author={Andrews, Ben},
   title={Contraction of convex hypersurfaces in Euclidean space},
   journal={Calc. Var. Partial Differential Equations},
   volume={2},
   date={1994},
   number={2},
   pages={151--171},
   issn={0944-2669},
   review={\MR{1385524 (97b:53012)}},
}


\bib{AndrewsRolling}{article}{
   author={Andrews, Ben},
   title={Gauss curvature flow: the fate of the rolling stones},
   journal={Invent. Math.},
   volume={138},
   date={1999},
   number={1},
   pages={151--161},
   issn={0020-9910},
   review={\MR{1714339 (2000i:53097)}},
}

\bib{AndrewsPinch}{article}{
   author={Andrews, Ben},
   title={Pinching estimates and motion of hypersurfaces by curvature
   functions},
   journal={J. Reine Angew. Math.},
   volume={608},
   date={2007},
   pages={17--33},
   issn={0075-4102},
   review={\MR{2339467 (2008i:53087)}},
}

\bib{Andrews2D}{article}{
   author={Andrews, Ben},
   title={Deforming surfaces by non-concave curvature functions},
   eprint={arXiv:math.DG/0402273}
}

\bib{AndrewsReg}{article}{
	author={Andrews, Ben},
	title={Fully nonlinear parabolic equations in two space variables},
	eprint={arXiv:math.AP/0402235}
}
	
\bib{CRS}{article}{
    author={Cabezas-Rivas, Esther},
    author={Sinestrari, Carlo},
    title={Volume-preserving flow by powers of the $m$th mean curvature},
    eprint={	arXiv:0902.2090v1},
    status={to appear in Calc. Var. PDE},
    date={2009},
}

\bib{Caffarelli}{article}{
   label={Ca},
   author={Caffarelli, Luis A.},
   title={Interior a priori estimates for solutions of fully nonlinear
   equations},
   journal={Ann. of Math. (2)},
   volume={130},
   date={1989},
   number={1},
   pages={189--213},
   issn={0003-486X},
   review={\MR{1005611 (90i:35046)}},
}

\bib{ChowGCF}{article}{
   label={Ch1},
   author={Chow, Bennett},
   title={Deforming convex hypersurfaces by the $n$th root of the Gaussian
   curvature},
   journal={J. Differential Geom.},
   volume={22},
   date={1985},
   number={1},
   pages={117--138},
   issn={0022-040X},
   review={\MR{826427 (87f:58155)}},
}

\bib{ChowSCF}{article}{
   label={Ch2},
   author={Chow, Bennett},
   title={Deforming convex hypersurfaces by the square root of the scalar
   curvature},
   journal={Invent. Math.},
   volume={87},
   date={1987},
   number={1},
   pages={63--82},
   issn={0020-9910},
   review={\MR{862712 (88a:58204)}},
}

\bib{Cordes}{article}{
   label={Co},
   author={Cordes, Heinz Otto},
   title={\"Uber die erste Randwertaufgabe bei quasilinearen
   Differentialgleichungen zweiter Ordnung in mehr als zwei Variablen},
   language={German},
   journal={Math. Ann.},
   volume={131},
   date={1956},
   pages={278--312},
   issn={0025-5831},
   review={\MR{0091400 (19,961e)}},
}

\bib{Evans}{article}{
   author={Evans, Lawrence C.},
   title={Classical solutions of fully nonlinear, convex, second-order
   elliptic equations},
   journal={Comm. Pure Appl. Math.},
   volume={35},
   date={1982},
   number={3},
   pages={333--363},
   issn={0010-3640},
   review={\MR{649348 (83g:35038)}},
}

\bib{Glaeser}{article}{
   author={Glaeser, Georges},
   title={Fonctions compos\'ees diff\'erentiables},
   language={French},
   journal={Ann. of Math. (2)},
   volume={77},
   date={1963},
   pages={193--209},
   issn={0003-486X},
   review={\MR{0143058 (26 \#624)}},
}


\bib{HuiskenMCF}{article}{
   author={Huisken, Gerhard},
   title={Flow by mean curvature of convex surfaces into spheres},
   journal={J. Differential Geom.},
   volume={20},
   date={1984},
   number={1},
   pages={237--266},
   issn={0022-040X},  
   review={\MR{772132 (86j:53097)}},  
}

\bib{HuiskenVol}{article}{
   author={Huisken, Gerhard},
   title={The volume preserving mean curvature flow},
   journal={J. reine angew. Math.},
   volume={382},
   date={1987},
   pages={35--48},
   issn={0075-4102}, 
   review={\MR{921165 (89d:53015)}},
}

\bib{Krylov}{article}{
   author={Krylov, N. V.},
   title={Boundedly inhomogeneous elliptic and parabolic equations},
   language={Russian},
   journal={Izv. Akad. Nauk SSSR Ser. Mat.},
   volume={46},
   date={1982},
   number={3},
   pages={487--523, 670},
   issn={0373-2436},
   review={\MR{661144 (84a:35091)}},
}

\bib{KS}{article}{
   author={Krylov, N. V.},
   author={Safonov, M. V.},
   title={A property of the solutions of parabolic equations with measurable
   coefficients},
   language={Russian},
   journal={Izv. Akad. Nauk SSSR Ser. Mat.},
   volume={44},
   date={1980},
   number={1},
   pages={161--175, 239},
   issn={0373-2436},
   review={\MR{563790 (83c:35059)}},
}

\bib{Lieb:book}{book}{
   author={Lieberman, Gary M.},
   title={Second order parabolic differential equations},
   publisher={World Scientific Publishing Co. Inc.},
   place={River Edge, NJ},
   date={1996},
   pages={xii+439},
   isbn={981-02-2883-X},
   review={\MR{1465184 (98k:35003)}},
}

\bib{Nirenberg}{article}{
   author={Nirenberg, L.},
   title={On a generalization of quasi-conformal mappings and its
   application to elliptic partial differential equations},
   conference={
      title={Contributions to the theory of partial differential equations},
   },
   book={
      series={Annals of Mathematics Studies, no. 33},
      publisher={Princeton University Press},
      place={Princeton, N. J.},
   },
   date={1954},
   pages={95--100},
   review={\MR{0066532 (16,592a)}},
}

\bib{Schneider}{book}{
   author={Schneider, Rolf},
   title={Convex bodies: the Brunn-Minkowski theory},
   series={Encyclopedia of Mathematics and its Applications},
   volume={44},
   publisher={Cambridge University Press},
   place={Cambridge},
   date={1993},
   pages={xiv+490},
   isbn={0-521-35220-7},
   review={\MR{1216521 (94d:52007)}},
}

\bib{Schnuerer}{article}{
    author={Schn{\"u}rer, Oliver C.},
   title={Surfaces contracting with speed $\vert A\vert \sp 2$},
   journal={J. Differential Geom.},
   volume={71},
   date={2005},
   number={3},
   pages={347--363},
   issn={0022-040X},
   review={\MR{2198805 (2006i:53099)}},
}
    
\bib{Schulze}{article}{
   author={Schulze, Felix},
   title={Convexity estimates for flows by powers of the mean curvature},
   journal={Ann. Sc. Norm. Super. Pisa Cl. Sci. (5)},
   volume={5},
   date={2006},
   number={2},
   pages={261--277},
   issn={0391-173X},
   review={\MR{2244700 (2007b:53138)}},
}

\bib{Smoczyk}{article}{
   author={Smoczyk, Knut},
   title={Starshaped hypersurfaces and the mean curvature flow},
   journal={Manuscripta Math.},
   volume={95},
   date={1998},
   number={2},
   pages={225--236},
   issn={0025-2611},
   review={\MR{1603325 (99c:53033)}},
}

\bib{Tso}{article}{
   author={Tso, Kaising},
   title={Deforming a hypersurface by its Gauss-Kronecker curvature},
   journal={Comm. Pure Appl. Math.},
   volume={38},
   date={1985},
   number={6},
   pages={867--882},
   issn={0010-3640},
   review={\MR{812353 (87e:53009)}},
}

\end{biblist}
\end{bibdiv}

\end{document}